\theoremstyle{plain}
\newtheorem{theorem}{Theorem}[section]
\newtheorem{lemma}[theorem]{Lemma}
\newtheorem{proposition}[theorem]{Proposition}
\newtheorem{conjecture}[theorem]{Conjecture} 
\theoremstyle{definition}
\theoremstyle{definition}
\newtheorem{remark}[theorem]{Remark}
\numberwithin{equation}{section}
\pgfplotsset{compat=1.18}
\title[The third moment of the logarithm of zeta]{The third moment of the logarithm of zeta and a twisted pair correlation conjecture}
\date{\today}
\author{Alessandro Fazzari}
\address{D\'epartement de math\'ematiques et de statistique, Universit\'e de Montr\'eal. CP 6128, succ. Centre-ville. Montreal, QC H3C 3J7, Canada}
\email{alessandro.fazzari@umontreal.ca}
\subjclass[2020]{Primary 11M06; Secondary 11M26.}
\author{Maxim Gerspach}
\address{Department of Mathematics, KTH Royal Institute of Technology, Lindstedtsvägen 25, Stockholm, Sweden}
\email{maxim.gerspach@gmail.com}
\begin{document}

\begin{abstract}
    We prove precise conditional estimates for the third moment of the logarithm of the Riemann zeta function, refining what is implied by the Selberg central limit theorem, both for the real and imaginary parts. These estimates match predictions made in work of Keating and Snaith. We require the Riemann Hypothesis, a conjecture for the triple correlation of Riemann zeros and another ``twisted'' pair correlation conjecture which explains the interaction of a prime power with Montgomery's pair correlation function. We believe this to be of independent interest, and devote substantial effort to its justification. Namely, we prove this conjecture on a certain range unconditionally, and on a larger range under the assumption of a variant of the Hardy-Littlewood conjecture with good uniformity.
\end{abstract}

\maketitle

\section{Introduction}

In the seminal work of Keating and Snaith \cite{KS1}, the authors provide a precise conjecture for the moments of the Riemann zeta function. More specifically, they predict that 
\begin{equation}\label{KSConj}
    \frac{1}{T} \int_T^{2 T} \left|\zeta \left(\tfrac{1}{2} + i t \right) \right|^{2s} \, dt \sim g(s) a(s) (\log T)^{s^2}
    \qquad (\Re s>-\tfrac{1}{2}).
\end{equation}
Here,
\[ a(s) = \prod_p \left( (1-1/p)^{s^2} \sum_{m \ge 0} \left( \frac{\Gamma(s+m)}{m! \Gamma(s)} \right)^2 \frac{1}{p^m} \right)  \]
is an absolutely convergent Euler product and
\[ g(s) = \frac{G^2(s+1)}{G(2s+1)} \]
is a ratio of Barnes $G$-functions.
While the Euler product part was fairly well-understood before, the random matrix part $g(s)$ was previously of a much more mysterious nature. 

Roughly speaking, the values of $g(s)$ were derived as follows.
Denote by $U(N)$ the set of unitary $N \times N$ matrices and by $\mathbb{E}^{U(N)}$ the expected value with respect to the probability Haar measure on $U(N)$. For a unitary matrix $U$, write
\[ Z(U,\theta) := \det \left( 1 - U e^{-i \theta} \right) \]
for the characteristic polynomial of $U$ evaluated at $e^{-i \theta}$. Then they prove that we have
\begin{equation}
    M_N(s) := \mathbb{E}^{U(N)}[ |Z(U,\theta)|^{2s} ] = \prod_{j=1}^N \frac{\Gamma(j) \Gamma(j+2s)}{\Gamma(j+s)^2}
    \qquad (\Re s>-\tfrac{1}{2}).
\end{equation}
In the large $N$ limit, one can then show that
\[ \lim_{N \to \infty} N^{-s^2} M_N(s) = g(s). \]
In other words, $M_N(s)$ is well-approximated by $g(s) N^{s^2}$, and with the translation $N = \log \frac{T}{2 \pi}$ this yields the random matrix part of \eqref{KSConj}.

In the context of the logarithm of the Riemann zeta function on the critical line, a celebrated result of Selberg \cite{SelbergCLT,SelbergOandN} tells us the following distributional result. Let $T$ be a (large) real parameter, and $\tau$ be a uniformly distributed random variable on $[T,2T]$. Then we have convergence in law
\[ \frac{\log \zeta \left( \frac{1}{2} + i \tau \right)}{\sqrt{\log \log T}} \overset{d}{\rightarrow} \mathcal{N}^{\mathbb{C}}(0,1), \]
a standard complex Gaussian. Selberg (see also Tsang \cite{Tsang1}) proved more precisely that, assuming the Riemann Hypothesis (RH), for any positive integer $k$ we have
\[ \mathbb{E} \left[ \Re \log \zeta \Big( \frac{1}{2} + i \tau \Big)^{2 k} \right] = \mu_k \left( \frac{1}{2} \log \log T \right)^{ k} \left( 1 + O_k \left( \frac{1}{\log \log T} \right) \right) \]
where $\mu_k = \frac{(2k)!}{k! 2^k}$ denotes the $2k$-th Gaussian moment. Moreover, he proved a similar estimate for the imaginary part, and a slightly weaker estimate unconditionally.

Based on this central limit theorem, one might be led to believe that the real and imaginary parts of $\log \zeta$ follow essentially the same distribution. While this is true at a leading order, Odlyzko observed numerically that this does not seem to be the case on a finer level; see e.g. \cite[p. 51]{Odlyzko}. A perhaps slightly lesser-known conjecture made by Keating and Snaith makes this observation more precise. 
%
Setting $N = \log \frac{T}{2 \pi}$, they conjecture \cite[(97)]{KS1} that
\[ \mathbb{E} \left[ \left( \Re \log \zeta \Big( \frac{1}{2} + i \tau \Big) \right)^k \right] \sim \frac{d^k}{ds^k} [ M_N(s/2) a(s/2)] \Big|_{s = 0}, \]
and it seems conceivable that this should hold to a higher level of precision.
They provide a similar conjecture for the imaginary part (see \cite[(98)]{KS1}), which exhibits a different behaviour. In particular, it implies that all the odd moments of $\Im\log\zeta(\frac{1}{2}+i\tau)$ are of size $o(1)$.
%

Specialising to the second moment, this would suggest that 
\begin{equation}\label{GoldstonSM} \mathbb{E} \left[ \left( \Re \log \zeta \Big( \frac{1}{2} + i \tau \Big) \right)^2 \right] = \frac{1}{2} \log \log T + \frac{\gamma + 1}{2} + \frac{1}{2}\sum_{p \text{ prime}} \sum_{m \ge 2} \frac{1-m}{m^2} \frac{1}{p^m} + o(1) \end{equation}
and the same estimate for the imaginary part. This was proved by Goldston \cite{Goldston1} for the imaginary part assuming RH and the pair correlation conjecture (compare Conjecture \ref{PCC}). For the real part, the same estimate was obtained much more recently by Lugar, Milinovich and Quesada-Herrera \cite{LMQ} under the same assumptions.

The main objective of this work is to prove an analogous conditional estimate for the third moment of the real and imaginary parts of $\log \zeta$.

For brevity, let us denote 
$c_Z := - \frac{\pi^2}{4}$ 
and
\[ c_P := \frac{3}{4} \sum_{p, m \ge 2} \frac{1}{m p^m} \sum_{k + \ell = m} \frac{1}{k \ell}. \]
Note that $c_Z = \frac{1}{8}M_N'''(0)$ and $c_P = \frac{1}{8}a'''(0)$.
We have the following

\begin{theorem}\label{MainTheorem}
    Assume RH, the pair and triple correlation Conjectures \ref{PCC} and \ref{TCC2}, and the ``twisted pair correlation'' Conjecture \ref{TPCC}. Then we have
    \begin{align}
    \label{TheoremRP}
        M_3^{\Re}(T) &:= \frac{1}{T} \int_T^{2 T} \left( \Re \log \zeta(1/2+it) \right)^3 \, dt = c_P + c_Z + O \left( \frac{1}{\log T} \right) \\
    \intertext{and}
    \label{TheoremIP}
        M_3^{\Im}(T) &:= \frac{1}{T} \int_T^{2 T} \left( \Im \log \zeta(1/2+it) \right)^3 \, dt = O \left( \frac{1}{\log T} \right).
    \end{align}
\end{theorem}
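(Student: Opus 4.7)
The natural strategy is a Selberg-type explicit formula decomposition
\[
\log \zeta(\tfrac{1}{2}+it) = P_X(t) + Z_X(t) + \textup{(small error)},
\]
where $P_X(t)$ is a smoothly truncated sum over prime powers with cutoff at scale $X$ and $Z_X(t)$ is a sum over non-trivial zeros of $\zeta$, weighted by a Fejér-type kernel of width $1/\log X$. The parameter is chosen as $X = T^\theta$ for some small fixed $\theta>0$. Cubing and taking real or imaginary parts gives
\[
(\Re \log \zeta)^3 = (\Re P_X)^3 + 3(\Re P_X)^2(\Re Z_X) + 3(\Re P_X)(\Re Z_X)^2 + (\Re Z_X)^3,
\]
and similarly with $\Im$ in place of $\Re$; averaging over $t\in[T,2T]$ produces four contributions to compute separately.

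The pure prime cube $\frac{1}{T}\int_T^{2T}(\Re P_X)^3\,dt$ is a triple sum over prime powers which, after $t$-orthogonality, collapses onto diagonal configurations in which the three logarithmic frequencies cancel; a Mertens-type manipulation yields $c_P + O(1/\log T)$. The pure zero cube $\frac{1}{T}\int_T^{2T}(\Re Z_X)^3\,dt$ is handled by the triple correlation Conjecture \ref{TCC2}, producing $c_Z + O(1/\log T)$. The mixed term $\frac{1}{T}\int_T^{2T}(\Re P_X)(\Re Z_X)^2\,dt$ reduces, after expanding the prime-power factor, to a pair correlation sum over zero differences twisted by a single phase $p^{-nit}$, which is precisely the regime covered by the twisted pair correlation Conjecture \ref{TPCC}. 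Finally, $\frac{1}{T}\int_T^{2T}(\Re P_X)^2(\Re Z_X)\,dt$ pairs two prime phases against a linear statistic in the zeros; after diagonalising in the two prime variables this reduces, via the Riemann--von Mangoldt machinery, to an estimate which should not require any new conjectural input beyond PCC (Conjecture \ref{PCC}).

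For the imaginary third moment the decisive observation is parity: the real-part calculations rest on cosine kernels whose diagonal values $\cos(0)=1$ furnish the main terms, whereas the analogous sine kernels arising in the imaginary-part expansion satisfy $\sin(0)=0$. Each of the four pieces above is therefore sub-leading in the imaginary case, and quantifying the surviving lower-order contributions yields the claimed $O(1/\log T)$ bound, in agreement with the Keating--Snaith prediction that all odd imaginary moments are $o(1)$.

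The principal obstacle I anticipate is error management across the four pieces in concert. The exponent $\theta$ in $X=T^\theta$ must be chosen so that the prime-side Mertens truncation errors, the zero-side correlation corrections, and the explicit-formula remainder simultaneously fit within the $O(1/\log T)$ window; in particular the mixed term demands Conjecture \ref{TPCC} with uniformity in the prime-power parameter over a range up to a fixed power of $T$, which is the reason the authors devote substantial effort to that conjecture. A further subtlety is the exact identification of the main constants: the combinatorial output of the pure prime cube must match $c_P = \tfrac{1}{8}a'''(0)$, and the triple correlation output must match $c_Z = \tfrac{1}{8}M_N'''(0) = -\pi^2/4$. That these recombine to reproduce precisely the Keating--Snaith random-matrix prediction is a stringent consistency check that must be verified directly.
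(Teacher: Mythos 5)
Your framework coincides with the paper's: the same Selberg-type decomposition $\log\zeta=P+Z$ (Propositions \ref{P&Z} and \ref{P&ZIm}), the same four-term expansion of the cube, the same assignment of conjectural inputs ($P^3$ unconditional diagonal analysis giving $c_P$, $P^2Z$ needing only RH via the Landau--Gonek formula of Lemma \ref{GoneksFormula}, $PZ^2$ via Conjecture \ref{TPCC}, $Z^3$ via Conjectures \ref{PCC} and \ref{TCC2}), and the same parity/oddness mechanism for the vanishing of the imaginary third moment.

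However, there is a genuine gap in your bookkeeping of the main terms. You assert that the pure zero cube yields $c_Z+O(1/\log T)$ and you leave the value of the mixed term $\frac{3}{T}\int_T^{2T}P(t)Z(t)^2\,dt$ unspecified, which---given that your final answer is $c_P+c_Z$---forces that mixed term to be $O(1/\log T)$. Neither claim is correct. With $x=T^\beta$, the paper shows
\begin{equation*}
\frac{1}{T}\int_T^{2T}Z(t)^3\,dt=-\frac{\pi^2}{4}-\frac{3}{2}\int_0^{\beta}\Bigl(\frac{g(b/\beta)}{\beta}-\frac{1}{b}\Bigr)\mathcal L(b)\,db+O\Bigl(\frac{1}{\log x}\Bigr),
\qquad
\frac{1}{T}\int_{T}^{2T}P(t)Z(t)^2\,dt=\frac{1}{2}\int_0^{\beta}\Bigl(\frac{g(b/\beta)}{\beta}-\frac{1}{b}\Bigr)\mathcal L(b)\,db+O\Bigl(\frac{1}{\log x}\Bigr),
\end{equation*}
with $\mathcal L$ as in \eqref{LDef}. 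Since $\frac{g(b/\beta)}{\beta}-\frac{1}{b}=-f(b/\beta)/b$ and $\mathcal L(b)\sim b$ near $0$, this common integral is of order $\beta$, i.e.\ a bounded nonzero constant, not an error term. The theorem only follows because the factor $3$ from the multinomial expansion makes these two contributions cancel exactly; this cancellation is precisely what dictates the shape of $m_n(\alpha)$ in Conjecture \ref{TPCC} and its identity with $H_*(\alpha,\Lambda(n)/\log T)$, as the paper emphasises. As written, your argument would either miss the secondary main term in $Z^3$ or fail to extract the matching one from $PZ^2$, and in either case would not recover the constant $c_Z=-\pi^2/4$. The rest of the outline (the diagonal analysis for $c_P$, the uniformity requirement in the prime-power parameter of Conjecture \ref{TPCC}, and the sine-versus-cosine cancellation for the imaginary part) is sound and consistent with the paper.
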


Before we delve deeper into the assumptions of this theorem, let us first discuss the requirements for the second moment made in \cite{Goldston1,LMQ}. There, the authors require RH and a conjecture concerning the pair correlation of zeros of the Riemann zeta function. 

Recall that Montgomery \cite{Montgomery1} studied the pair correlation through the function
\begin{equation}\label{FDefin}
    F(\alpha) := \left( \frac{T \log T}{2 \pi} \right)^{-1} \sum_{T < \gamma, \gamma' \le 2 T} T^{i \alpha (\gamma - \gamma')} \omega(\gamma-\gamma'),
\end{equation}
where $\alpha\in\mathbb R$ and
\[ \omega(x) = \frac{4}{4+x^2}. \]
The function $F$ is essentially the Fourier transform of the distribution function of $\gamma-\gamma'$. Namely, for a wide class of functions $r$, the definition \eqref{FDefin} immediately implies that
\begin{equation}
    \sum_{T < \gamma, \gamma' \le 2 T} r \left( (\gamma-\gamma') \frac{\log T}{2 \pi} \right) \omega(\gamma-\gamma') = \frac{T \log T}{2 \pi} \int_{\mathbb{R}} F(\alpha)  \hat r(\alpha) \, d\alpha,
\end{equation}
where we use the convention
\[ \hat r(\alpha) = \int_{\mathbb{R}} r(t) e^{-2 \pi i \alpha t} \, dt. \]
Montgomery proved that
\[ F(\alpha) = (1+o(1)) T^{-2 |\alpha|} \log T + |\alpha| + o(1) \]
for $|\alpha| < 1-\varepsilon$. He moreover conjectured that one should more generally have
\[ F(\alpha) = (1+o(1)) T^{-2 |\alpha|} \log T + \min \{ |\alpha|, 1\} + o(1) \]
uniformly for $\alpha$ in bounded intervals. Through Fourier transform, this suggests, writing $\tilde\gamma=\frac{\log T}{2\pi}\gamma$, that one should have

\begin{conjecture}[Pair Correlation Conjecture, Montgomery]\label{PCC}
For any continuous, integrable function $r$ such that $\hat r$ is Lipschitz continuous and integrable, we have
    \begin{equation}\begin{split}\notag
        \bigg(\frac{T\log T}{2\pi}\bigg)^{-1} \sum_{T\leq \gamma,\gamma'\leq 2T}  r(\tilde\gamma-\tilde\gamma')
        =\int_{\mathbb R} \hat r(a) \bigg( \delta(a) + \min\{|a|,1\} \bigg)da  + O\bigg(\frac{1}{\log T}\bigg).
    \end{split}\end{equation}
\end{conjecture}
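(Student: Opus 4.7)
\emph{Proof plan.} The statement is the Fourier-analytic reformulation of Montgomery's conjecture for $F(\alpha)$, upgraded to the quantitative form
\[ F(\alpha) = (1+O(1/\log T))\,T^{-2|\alpha|}\log T + \min\{|\alpha|,1\} + O(1/\log T), \]
uniformly for $\alpha$ in any fixed bounded interval. The strategy is thus to start from this strengthened hypothesis and transfer it to the zero side via the Plancherel-type identity
\[ \sum_{T<\gamma,\gamma'\leq 2T} r(\tilde\gamma-\tilde\gamma')\,\omega(\gamma-\gamma') = \frac{T\log T}{2\pi}\int_{\mathbb R} F(\alpha)\,\hat r(\alpha)\, d\alpha, \]
obtained from \eqref{FDefin} by Fourier inversion of $r$; integrability of $\hat r$ makes the right-hand side absolutely convergent.

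First I would remove the weight $\omega(\gamma-\gamma')$ from the left-hand side. For close pairs $|\gamma-\gamma'|\leq C/\log T$ one has $\omega(\gamma-\gamma') = 1 + O(1/\log^2 T)$, while the contribution of distant pairs is controlled via the decay of $r$ inherited from the Lipschitz continuity and integrability of $\hat r$. After dividing by $(T\log T)/(2\pi)$, this replacement contributes an error of size $O(1/\log T)$.

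Second I would substitute the conjectured shape of $F$ into the integral. The $\min\{|\alpha|,1\}$ piece gives the corresponding main term directly. The kernel $\log T\cdot T^{-2|\alpha|}$ integrates to $1$ and concentrates at $\alpha=0$, so it behaves like $\delta(\alpha)$: using Lipschitz continuity to write $\hat r(\alpha)=\hat r(0)+O(|\alpha|)$ near zero, the elementary computation $\int_{\mathbb R}\log T\cdot T^{-2|\alpha|}|\alpha|\,d\alpha = O(1/\log T)$ yields $\int \log T\cdot T^{-2|\alpha|}\hat r(\alpha)\,d\alpha = \hat r(0)+O(1/\log T) = \int\hat r(\alpha)\delta(\alpha)\,d\alpha + O(1/\log T)$.

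The principal obstacle is controlling the integral over the tails $|\alpha|>K$, where the conjectured formula for $F$ is not directly assumed. I would choose $K$ large enough that $\int_{|\alpha|>K}|\hat r(\alpha)|\,d\alpha < 1/\log T$ (possible since $\hat r\in L^1$), and then invoke the bound $F(\alpha)\ll 1$ for $|\alpha|$ bounded away from $0$---itself really part of what one assumes in the strong form of the pair correlation hypothesis---to make the tail contribution negligible. Collecting the errors from these three steps yields the claimed $O(1/\log T)$ bound.
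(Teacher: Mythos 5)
The statement you are asked about is Conjecture \ref{PCC}, and the paper offers no proof of it: it is an unproved hypothesis that the authors \emph{assume} in Theorem \ref{MainTheorem}. The only thing the paper does is motivate it heuristically, by recalling Montgomery's theorem that $F(\alpha) = (1+o(1))T^{-2|\alpha|}\log T + |\alpha| + o(1)$ for $|\alpha|<1-\varepsilon$ and his conjecture extending this to all bounded $\alpha$, and then saying that ``through Fourier transform, this suggests'' the displayed form. So there is no proof to compare yours against, and your proposal cannot be a proof of the statement either: what you have written is a reduction of Conjecture \ref{PCC} to a different (and strictly stronger) conjecture, namely a quantitative version of Montgomery's $F(\alpha)$ conjecture with error terms $O(1/\log T)$ holding uniformly, together with a uniform bound $F(\alpha)\ll 1$ for large $\alpha$. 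Neither of these inputs is proved, nor assumed elsewhere in the paper.

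That said, as a conditional transference argument your sketch is essentially the standard one, and it closely parallels what the paper actually does carry out in Section \ref{SecSTPCCimpTCC} for the twisted analogue (Proposition \ref{STPCCimpliesTPCC}, deducing Conjecture \ref{TPCC} from the pointwise Conjecture \ref{STPCC}). Two details deserve more care if you were to write it out. First, removing the weight $\omega(\gamma-\gamma')$ is handled in the paper not pair-by-pair but on the Fourier side, via the convolution identity $\hat r = (\log T)\,T^{-2|\cdot|} * \hat g$ and an integration by parts; this is where the extra regularity hypothesis $\hat g'(\alpha)\ll|\alpha|^{-3}$ enters in their class $\mathfrak{C}(\mathbb{R})$, and your Lipschitz-only hypothesis is exactly why the paper cites Hejhal for the role of Lipschitz continuity in getting the $1/\log T$ error. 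Second, your tail truncation at $K$ with $\int_{|\alpha|>K}|\hat r|<1/\log T$ forces $K$ to grow with $T$, so ``bounded intervals'' uniformity does not suffice; you correctly flag that you need $F(\alpha)\ll 1$ globally, but you should state explicitly that this is an additional conjectural input rather than ``really part of'' the bounded-interval hypothesis.
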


We note that the size $\frac{1}{\log T}$ of the error term makes crucial use of the Lipschitz continuity of $\hat r$, as discussed e.g. in the introduction of Hejhal \cite{Hejhal1}. Assuming this conjecture, the error term in the estimate \eqref{GoldstonSM} of Goldston \cite{Goldston1}, and the analogous one of \cite{LMQ} for the real part, could be improved to $\frac{1}{\log T}$.

Vast generalisations of this conjecture have since been derived for the $n$-point correlations of the zeros, coming from the $n$-point correlations of eigenvalues of the characteristic polynomial of a random unitary matrix. We refer the reader to the seminal work of Rudnick and Sarnak \cite{RS1} for a detailed discussion of this. In there, the authors prove the $n$-point correlations under certain support assumptions on the Fourier transform of the function one integrates against. This bears similarity to the fact that Montgomery managed to prove the estimate for $F(\alpha)$ in the range $|\alpha| < 1 - \varepsilon$. In order to do so, they need in particular to compute the Fourier transform of the $n$-dimensional sine kernel determinant, compare \cite[Theorem 4.1]{RS1}. For $n = 3$, this was previously worked out explicitly by Hejhal \cite[(11)]{Hejhal1}. His work suggests that one should have

\begin{conjecture}[Triple Correlation Conjecture, Hejhal]\label{TCC2}
    For any continuous, integrable function $r$ such that $\hat r$ is Lipschitz continuous and integrable, we have
    \begin{equation}\begin{split}\notag
        \bigg(\frac{T\log T}{2\pi}\bigg)^{-1} \sum_{T\leq\gamma,\gamma',\gamma''\leq 2T} r(\tilde\gamma-\tilde\gamma',\tilde\gamma-\tilde\gamma'')
        = \int_{\mathbb R} \int_{\mathbb R} \hat r(a,b)H(a,b) da\;db + O\bigg(\frac{1}{\log T}\bigg),
    \end{split}\end{equation}
    where
    $$H(a,b) = H_{\delta}(a,b) + H_*(a,b),$$
    with
    \begin{equation}\begin{split}\label{6may.20} 
        H_{\delta}(a,b) &:= \delta(a)\delta(b) + \delta(a)\min\{|b|,1\} 
        + \delta(b)\min\{|a|,1\} + \delta(a+b)\min\{|a|,1\}\\
        H_*(a,b) &:= 2G(a,b) + \min\{|a|,1\} + \min\{|b|,1\} + \min\{|a+b|,1\} -2 ,
    \end{split}\end{equation}
    and 
    \[G(a,b) = \max \left\{ \frac{1}{2}(2-|a|-|b|-|a+b|),0 \right\}.\] 
    \end{conjecture}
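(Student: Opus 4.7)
The plan is to follow the Hejhal--Rudnick--Sarnak framework: use the Riemann--Weil explicit formula to convert the sum over zeros into sums over prime powers, then invoke pair- and triple-Hardy--Littlewood type conjectures to evaluate the resulting prime sums.

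By Fourier inversion, using the paper's convention,
\[ \sum_{T\leq\gamma,\gamma',\gamma''\leq 2T} r(\tilde\gamma-\tilde\gamma',\tilde\gamma-\tilde\gamma'') = \iint_{\mathbb R^2} \hat r(a,b)\, S(a+b)\,\overline{S(a)}\,\overline{S(b)}\, da\, db, \]
where $S(\alpha):=\sum_{T\leq\gamma\leq 2T} T^{i\alpha\gamma}$ (with a Montgomery-type weight inserted to ensure convergence, as in the definition of $F(\alpha)$). Next I would apply the explicit formula to each factor, writing $S(\alpha)$ as a polar contribution from $s=1$ plus a prime-power sum $-\sum_n \Lambda(n) n^{-1/2-i\alpha(\log T)/(2\pi)}$ plus a harmless archimedean piece. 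The product then expands into eight terms.

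These eight pieces organise naturally. The triple-polar piece yields the main term $\delta(a)\delta(b)$. The three pieces with a single prime sum against two polar terms, together with Montgomery's pair-correlation analysis (which already requires Hardy--Littlewood for prime pairs to extend $F(\alpha)$ into the range $|\alpha|\geq 1$), produce the remaining Dirac summands in $H_\delta$ together with the three $\min\{|\cdot|,1\}$ summands in $H_*$ and the constant $-2$, the latter appearing as a boundary/diagonal correction. The genuine triple off-diagonal piece reduces, after isolating collisions $n_i=n_j$ (absorbed into further $\min\{|\cdot|,1\}$ contributions via Mertens-type estimates), to a sum of the shape $\sum_n \Lambda(n)\Lambda(n+h)\Lambda(n+k)$. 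A Hardy--Littlewood conjecture for prime triples predicts this to be $\sim N\cdot\mathfrak S(h,k)$ for an explicit singular series $\mathfrak S$, and Fourier-inverting $\mathfrak S$ should produce precisely $2G(a,b)$, with the hexagonal support region $|a|+|b|+|a+b|\leq 2$ built into the definition of $G$.

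The principal obstacle is the uniform $O(1/\log T)$ error. This requires: (i) a quantitatively strong Hardy--Littlewood for prime triples with uniform error over the relevant range of $(h,k)$, well beyond current technology and the reason the statement must remain conjectural; (ii) careful use of the Lipschitz continuity of $\hat r$ to convert tail and boundary error terms into the claimed $1/\log T$ bound, in the spirit of Hejhal's introductory discussion of the pair-correlation case; and (iii) a delicate book-keeping to ensure the eight pieces recombine into $H_\delta+H_*$ without a residual main term. Step (i) is the deepest; indeed, the authors' later introduction of a ``twisted'' pair correlation conjecture can be viewed as a way to sidestep the full triple Hardy--Littlewood input in their specific application.
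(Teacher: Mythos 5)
The statement you were asked to prove is Conjecture \ref{TCC2}; the paper does not prove it and cannot, since it is one of the unproven hypotheses of Theorem \ref{MainTheorem}. The paper's only ``justification'' is a citation: the form of $H(a,b)$ comes from Hejhal's explicit computation of the Fourier transform of the $3$-dimensional sine-kernel determinant (his equation (11)), together with the Rudnick--Sarnak partial results that verify the $n$-level correlations for test functions with restricted Fourier support. So there is no proof in the paper to compare yours against, and your own text correctly recognises that any ``proof'' must remain a heuristic derivation.

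As a heuristic, your sketch is the standard and correct one (Fourier inversion to reduce to $\hat r(a,b)\,S(a+b)\overline{S(a)}\,\overline{S(b)}$, explicit formula, diagonal/off-diagonal bookkeeping, Hardy--Littlewood for prime triples, Fourier inversion of the singular series to recover $2G(a,b)$), and it matches the Bogomolny--Keating/Hejhal route that underlies the conjecture. Your identification of the genuine obstructions is also accurate: the triple Hardy--Littlewood input with uniform error is far beyond current technology, and the Lipschitz hypothesis on $\hat r$ is exactly what is needed (as the paper notes, following Hejhal) to get the error down to $O(1/\log T)$. Two small caveats: your attribution of the constant $-2$ in $H_*$ to ``a boundary/diagonal correction'' is too vague --- it arises from the expansion of the $3\times 3$ sine-kernel determinant, i.e.\ from the lower-order terms of the singular series, not from a boundary effect; and the claim that the pieces ``recombine into $H_\delta + H_*$ without a residual main term'' is precisely the nontrivial computation Hejhal carried out, so it should be presented as a cited fact rather than an expectation. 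Neither point affects the conclusion that this is a plausible heuristic for a statement that must remain conjectural.
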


For our purposes, it appears to be insufficient to require only this conjecture (and RH) in order to deduce Theorem \ref{MainTheorem}. The reason for this is the fact that we need to understand the following modification to Montgomery's pair correlation, which we refer to as the ``twisted'' pair correlation function.

Let $n$ be a prime power. For $\alpha\in\mathbb R$, we define  
\begin{equation}\label{DefinitionOfFn}
    F_n(\alpha) := -\bigg(\frac{T}{2\pi}\frac{\Lambda(n)}{\sqrt{n}} \bigg)^{-1}
    \sum_{T\leq\gamma,\gamma'\leq 2T} n^{i\gamma} T^{i\alpha(\gamma-\gamma')}\omega(\gamma-\gamma').
\end{equation}
Note that $F_n(\alpha) = F_n(-\alpha-\frac{\log n}{\log T}).$ A priori, it may not be obvious that the normalisation we have chosen turns out to be reasonable. One of the main efforts of the present work is to reason that this is indeed the case, and attempt to evaluate the functions $F_n$.
As another reference point for understanding this function, we would like to point to Landau's formula in the version of Gonek \cite{Gonek1} for sums of $n^{i \gamma}$, see Lemma \ref{GoneksFormula}.

In order to state more precisely what we believe to be the correct estimate for $F_n(\alpha)$, let us first introduce the following notation:
\begin{equation}\begin{split}\notag
        r_1(\alpha,n) &:=  
        \frac{1}{\Lambda(n)} \sum_{m} \frac{\Lambda(mn) \Lambda(m)}{m}\min \bigg \{\frac{m}{T^\alpha},\frac{T^\alpha}{m}\bigg\}^2 \\
        r_2(\alpha,n) &:= 
        \frac{1}{\Lambda(n)}\sum_m \Lambda(m) \Lambda(n/m) \min \bigg\{ \frac{nT^\alpha}{m}, \frac{m}{nT^\alpha} \bigg\}^2.
\end{split}\end{equation}

\begin{conjecture}[Strong Twisted Pair Correlation Conjecture]\label{STPCC}
Fix $\varepsilon>0$. Uniformly for all prime powers $n\leq T^{1-\varepsilon}$, we have
\begin{equation}\notag
        F_n(\alpha) =
        \begin{cases}
            T^{2 \alpha} (\log T+O(1))  + \frac{\log T+O(1)}{(n T^\alpha)^2} 
            - r_2(\alpha,n) + O (\frac{1}{\log T})  & \alpha\in [-\frac{\log n}{\log T},0]\\
            T^{-2\alpha}(\log T+\frac{\log T}{n^2} +O(1)) -r_1(\alpha,n) + O(\frac{1}{\log T})  & \alpha\in (0,1-\frac{\log n}{\log T}) \\
            \min\{1,\frac{\log T}{\Lambda(n)}(\alpha-1+\frac{\log n}{\log T})\} + O(\frac{1}{\log T}) & \alpha\in [1-\frac{\log n}{\log T},\infty)
        \end{cases}
    \end{equation}
\end{conjecture}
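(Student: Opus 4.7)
The plan is to adapt Montgomery's analysis of the pair correlation $F(\alpha)$ to the twisted setting by applying an explicit-formula identity twice. Writing $n^{i\gamma} T^{i\alpha(\gamma-\gamma')} = (nT^\alpha)^{i\gamma}\cdot T^{-i\alpha\gamma'}$ decouples the two zero variables apart from the Plancherel-integrable kernel $\omega(\gamma-\gamma')$. Each resulting one-variable zero sum $\sum_\gamma x^{i\gamma}(\ldots)$ is then converted, via a Montgomery-Goldston style explicit formula, into a polar main term (evaluated by Gonek's version of Landau's formula, Lemma \ref{GoneksFormula}) plus a prime-power sum $\sum_m \Lambda(m)/\sqrt m$ against an appropriate kernel. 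Multiplying these out and re-summing produces a double prime sum $\sum_{m,m'}\Lambda(m)\Lambda(m')/\sqrt{mm'} \cdot W(m,m';n,\alpha,T)$ with an explicit oscillatory kernel $W$, which is the object to be analysed.

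Next, I would split this double prime sum into diagonal and off-diagonal parts. The trivial diagonal $m=m'$, evaluated via the prime number theorem, contributes the $T^{\pm 2\alpha}\log T$ main term together with its $(nT^\alpha)^{-2}\log T$ companion, the latter appearing because the two kernels are centred at the asymmetric parameters $nT^\alpha$ and $T^\alpha$. The shifted diagonal $m' = mn$ picks out precisely $\Lambda(n)\cdot r_1(\alpha,n)$, with the cutoff $\min\{m/T^\alpha,T^\alpha/m\}^2$ emerging naturally from the product of the two stationary-phase kernels evaluated at $m$ and $mn$; the factorisation diagonal $mm' = n$ analogously picks out $\Lambda(n)\cdot r_2(\alpha,n)$ and is only nonempty in the middle regime $\alpha\in [-\log n/\log T, 0]$, where the cutoffs of both kernels can simultaneously accommodate $m \le T^{|\alpha|}$ and $m' = n/m$. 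The upper regime $\alpha \ge 1 - \log n/\log T$ is of a different nature and is handled directly by Gonek's formula, which evaluates $\sum_\gamma n^{i\gamma} T^{i\alpha\gamma}$ essentially unconditionally in this range and produces the piecewise-linear ramp $\min\{1,\frac{\log T}{\Lambda(n)}(\alpha - 1 + \frac{\log n}{\log T})\}$ directly.

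The off-diagonal contribution, consisting of terms with $m\ne m'$, $m\ne nm'$ and $mm'\ne n$, is bounded using a uniform variant of the Hardy-Littlewood prime-pair conjecture. After stationary-phase analysis of $W$, it reduces to short sums of $\Lambda(m)\Lambda(m+h)$ with shift $|h|$ comparable to $nT^\alpha$, weighted by a non-stationary oscillatory factor; a Hardy-Littlewood asymptotic with enough uniformity in $h$ then yields the required $O(1/\log T)$ saving after the $\Lambda(n)/\sqrt n$ normalisation. On a restricted range of $n$ where $nT^\alpha$ stays within a regime in which such prime-pair asymptotics are known, one can bound the off-diagonal unconditionally via Bombieri-Vinogradov-type mean value estimates for $\Lambda\ast\Lambda$, yielding the unconditional portion of the result promised in the abstract.

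The main obstacle is the uniformity of this off-diagonal bound: after dividing by the small normalising factor $\Lambda(n)/\sqrt n$, the error term in the Hardy-Littlewood estimate must remain $O(1/\log T)$ even as the shift $|h|$ ranges up to nearly $nT^\alpha$ and $n$ itself up to $T^{1-\varepsilon}$, a stronger uniformity than is classically available. A secondary technical task is to verify the matching of the three regimes at the boundaries $\alpha = 0$ and $\alpha = 1-\log n/\log T$, where the two different analyses (prime-sum vs.\ direct Gonek-Landau) must be shown to produce compatible main terms.
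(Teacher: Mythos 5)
First, note that the statement you are proving is labelled a \emph{conjecture} in the paper: the authors do not prove it in full, but only in restricted ranges (Proposition \ref{TPC_SmallAlpha} for $-\frac{\log n}{\log T}\le\alpha<1-\frac{\log n}{\log T}-\delta_T$ under RH alone, and Theorem \ref{TPC_IntermediateAlpha} for a \emph{smoothed} version $F_n(\alpha;\psi_U)$ on $1-\frac{\log n}{\log T}\le\alpha\le 2-48\frac{\log n}{\log T}$ under RH and Conjecture \ref{HLC}). Your overall strategy for the first two regimes — decoupling $n^{i\gamma}T^{i\alpha(\gamma-\gamma')}=(nT^{\alpha})^{i\gamma}T^{-i\alpha\gamma'}$, applying Montgomery's explicit formula to each zero sum, and reading off $r_1$ from the shifted diagonal $m'=mn$ and $r_2$ from the factorisation diagonal $mm'=n$ — does match the paper's Section \ref{SecSmallAlpha}. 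One correction there: in the range $|\alpha|<1-\frac{\log n}{\log T}-\delta_T$ the Dirichlet polynomials are short, so the paper disposes of the off-diagonal terms by the first derivative test alone; no Hardy--Littlewood or Bombieri--Vinogradov input is needed, and this (not a restriction on $n$) is what makes that regime unconditional. Also, the $T^{\pm2\alpha}\log T$ and $(nT^{\alpha})^{-2}\log T$ main terms come from the cross terms (prime sum against the polar term $y^{-1+it}\log\frac{t}{2\pi}$ of the explicit formula), not from the diagonal $m=m'$ of the double prime sum.

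The genuine gap is your treatment of the regime $\alpha\ge 1-\frac{\log n}{\log T}$. Gonek's Landau formula (Lemma \ref{GoneksFormula}) cannot handle this range: it requires the parameter $x\le T$, whereas here $nT^{\alpha}\ge T$, and more fundamentally it evaluates a \emph{single} sum $\sum_{\gamma}x^{i\gamma}$, while $F_n(\alpha)$ couples $\gamma$ and $\gamma'$ through $\omega(\gamma-\gamma')$ and so never factors into a product of two single sums. In particular Gonek's formula cannot produce the ramp $\min\{1,\frac{\log T}{\Lambda(n)}(\alpha-1+\frac{\log n}{\log T})\}$, which is the genuinely new feature of the conjecture. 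In the paper this ramp arises from the \emph{secondary term} in the average of the twisted singular series, $\sum_{h\le y}\mathfrak{S}_n(h)=y-\frac12\min\{\Lambda(n),\log y\}+o(\log y)$ (Lemmas \ref{SigmaNCM} and \ref{LemmaAsS0}), fed through the Goldston--Gonek long-Dirichlet-polynomial machinery and the integral evaluation of Lemmas \ref{LemmaAsForf_n} and \ref{Lemma9.3bis}; the slope $\frac{\log T}{\Lambda(n)}$ is precisely the signature of the threshold $y=q$ at which $\min\{\Lambda(n),\log y\}$ saturates. Without this singular-series analysis your sketch has no mechanism generating the piecewise-linear main term, so the third case of the conjecture is not justified by your argument. (A smaller but real issue: the paper only controls the sharp-cutoff-to-smooth-cutoff transition at a cost of $\sqrt{n}$, which is why Theorem \ref{TPC_IntermediateAlpha} is stated for $F_n(\alpha;\psi_U)$ rather than $F_n(\alpha)$; your proposal does not address this.)
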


At first glance, this may seem overwhelming. Firstly, we would like to point out that, similar to the pair correlation setting, the summand $T^{-2 |\alpha|} \log T$ only matters near $\alpha = 0$ and essentially acts like a $\delta$-distribution when integrating $F_n$ against another function. Similarly the second term, which can be written as $T^{-2 |\alpha + \frac{\log n}{\log T}|} \log T$, acts as a $\delta$-distribution at $- \frac{\log n}{\log T}$.

Next, the summands $r_1$ and $r_2$ are small at most points, and the reader may choose to ignore them at first reading. They only produce spikes near integer multiples of $\frac{\Lambda(n)}{\log T}$, whose sizes are negligible as soon as $\Lambda(n)$ is moderately large. Moreover, these spikes are small enough that they do not contribute when integrating $F_n$ against a sufficiently nice function.

To elaborate on that, for $n=q^a$ prime power, let us denote
$$\mathcal E_n = \mathcal E_n(T) = (a-1)\frac{ \log q}{\log T} +\frac{1}{\log T}.$$
The above will appear as an error term in the following. Note that the first term in the definition of $\mathcal E_n$ vanishes when $n$ is a prime. 

Additionally, let us denote by $\mathfrak{C}(\mathbb{R})$ the set of functions $r \in L^1(\mathbb{R})$ that are continuous, and such that its Fourier transform $\hat r$ is integrable, Lipschitz continuous, and $\hat r' (a) \ll |a|^{-3}$. 

Then we believe that the following should hold.

\begin{conjecture}[Twisted Pair Correlation Conjecture]\label{TPCC}
    Fix $\varepsilon>0$ and let $r \in \mathfrak{C}(\mathbb{R})$.
    Uniformly for all prime powers $n=q^a \le T^{1-\varepsilon}$, we have
    \begin{equation}\begin{split}\notag
        -\bigg(\frac{T}{2\pi} \frac{\Lambda(n)}{\sqrt{n}}\bigg)^{-1} \sum_{T\leq \gamma,\gamma',2T} n^{i\gamma} r(\tilde\gamma-\tilde\gamma')
        =  \int_{\mathbb R} \frac{\hat r(\alpha)+\hat r(-\alpha-\frac{\log n}{\log T})}{2} \Big(\delta(\alpha)+m_n(\alpha)\Big)  d\alpha 
        +O(\mathcal E_n),
    \end{split}\end{equation} 
    where
    $$m_n(\alpha) = 
    \begin{cases}
    1, & \text{if } \alpha<-1-\frac{\Lambda(n)}{\log T}, \\
    \frac{\log T}{\Lambda(n)}(-\alpha-1), & \text{if } -1-\frac{\Lambda(n)}{\log T}\leq \alpha<-1,\\
    0, & \text{if } -1\leq \alpha<1-\frac{\log n}{\log T},\\
    \frac{\log T}{\Lambda(n)}(\alpha-1+\frac{\log n}{\log T}), & \text{if } 1-\frac{\log n}{\log T}\leq \alpha<1-\frac{\log n -\Lambda(n)}{\log T},\\
    1, & \text{if } \alpha\geq 1-\frac{\log n-\Lambda(n)}{\log T}.
    \end{cases}$$
\end{conjecture}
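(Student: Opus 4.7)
The plan is to derive Conjecture \ref{TPCC} from Conjecture \ref{STPCC} by Fourier inversion, exactly analogous to how Conjecture \ref{PCC} is extracted from Montgomery's estimate for $F(\alpha)$. The first task is to reintroduce the weight $\omega$ that appears in the definition of $F_n$ but is absent from the left-hand side of TPCC. Setting $\omega_s(y):=\omega(2\pi y/\log T)$ so that $\omega_s(\tilde\gamma-\tilde\gamma')=\omega(\gamma-\gamma')$, and $g(y):=r(y)(1+\pi^2 y^2/\log^2 T)$ so that $r(\tilde\gamma-\tilde\gamma')=\omega(\gamma-\gamma')\,g(\tilde\gamma-\tilde\gamma')$, Fourier inversion of $g$ followed by exchange of sum and integral gives
\[
-\bigg(\frac{T}{2\pi}\frac{\Lambda(n)}{\sqrt n}\bigg)^{-1}\sum_{T\le\gamma,\gamma'\le 2T} n^{i\gamma}\, r(\tilde\gamma-\tilde\gamma') \;=\; \int_{\mathbb R} \hat g(\alpha)\, F_n(\alpha)\, d\alpha.
\]
Since $\hat g(\alpha)=\hat r(\alpha)-\hat r''(\alpha)/(4\log^2 T)$, and the correction integrates to $O(\log^{-2}T)$ under the decay hypothesis $\hat r'(a)\ll |a|^{-3}$ from $\mathfrak C(\mathbb R)$ (after integration by parts against $F_n$, which is uniformly bounded away from the delta peaks), this term is absorbed in $O(\mathcal E_n)$.

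Next, the symmetry $F_n(\alpha)=F_n(-\alpha-\log n/\log T)$, verified by the change of variables $\gamma\leftrightarrow\gamma'$ together with the identity $n^{i\gamma}\,n^{-i(\gamma-\gamma')}=n^{i\gamma'}$ in the defining sum, permits replacement of $\hat r$ by the symmetrization $H(\alpha):=\tfrac{1}{2}(\hat r(\alpha)+\hat r(-\alpha-\log n/\log T))$. One then partitions $\mathbb R$ into Region 1 $[-\log n/\log T,0]$, Region 2 $(0,1-\log n/\log T)$, Region 3 $[1-\log n/\log T,\infty)$, and the reflections of Regions 2 and 3 handled via symmetry, and substitutes STPCC piecewise. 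The peak terms $T^{-2|\alpha|}\log T$ and $T^{-2|\alpha+\log n/\log T|}\log T$ act distributionally as $\delta$-functions at $\alpha=0$ and $\alpha=-\log n/\log T$, and their combined contribution against $H$, using $H(0)=H(-\log n/\log T)$, produces the $\delta(\alpha)$-term of TPCC. Region 3 together with its reflection yields $\int H(\alpha)\,m_n(\alpha)\,d\alpha$ by direct identification of the min-function in STPCC with the piecewise-linear definition of $m_n$ in TPCC.

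The technical heart of the argument is the analysis of the corrections $r_1,r_2$. Writing $n=q^a$, one has $r_1(\alpha,q^a)=\log q\sum_{b\ge 1}q^{-b}\min\{q^b/T^\alpha,T^\alpha/q^b\}^2$, a sum of spikes localized at $\alpha=b\log q/\log T$ of height $\log q/q^b$ and width $\sim 1/\log T$, so each spike has $L^1$-mass $\log q/(q^b\log T)$; geometric summation gives $\int |H|\,r_1\,d\alpha\ll \log q/((q-1)\log T)\cdot\|H\|_\infty=O(1/\log T)$. Similarly, $r_2(\alpha,q^a)$ vanishes for $a=1$ and otherwise equals $\log q\sum_{c=1}^{a-1}\min\{q^c T^\alpha,q^{-c}T^{-\alpha}\}^2$, consisting of $a-1$ spikes at $\alpha=-c\log q/\log T$ each of $L^1$-mass $\log q/\log T$, totalling $O((a-1)\log q/\log T)$. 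Together these match the two summands in $\mathcal E_n=(a-1)\log q/\log T+1/\log T$. The $O(1/\log T)$ pointwise errors in STPCC, integrated against $H$ whose $L^1$-norm is bounded in terms of $r$, contribute an additional $O(1/\log T)$, all absorbed in $O(\mathcal E_n)$.

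The main obstacle is the delicate alignment of the two $\delta$-like peaks in $F_n$ with the single $\delta(\alpha)$ and the symmetrized weight in TPCC, which must be handled via the symmetry of $H$ about $-\log n/(2\log T)$ and the careful cancellation of boundary effects from the integration-by-parts in Step 1. A second subtlety is uniformity in $n=q^a\le T^{1-\varepsilon}$ for the $r_1,r_2$ bounds: one must check that the spike locations $\pm c\log q/\log T$ are well-separated from the $\delta$-peaks at $\{0,-\log n/\log T\}$ so that they do not contaminate the leading contributions, and that the $O(1)$ corrections inside the $\log T+O(1)$ factors of STPCC, when integrated against $H$, produce only an $O(1/\log T)$ error rather than something larger; here the Lipschitz continuity of $\hat r$ provided by $\mathfrak C(\mathbb R)$ is essential.
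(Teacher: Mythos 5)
Your overall strategy coincides with the paper's proof of Proposition \ref{STPCCimpliesTPCC}: write the unweighted zero sum as $\int_{\mathbb R}\hat g(\alpha)F_n(\alpha)\,d\alpha$ for a kernel $g$ related to $r$ by division by $\omega$, split $\mathbb R$ into the five regions determined by $\{-1,-\tfrac{\log n}{\log T},0,1-\tfrac{\log n}{\log T}\}$, let the two peaks $T^{-2|\alpha|}\log T$ and $T^{-2|\alpha+\log n/\log T|}\log T$ act as point masses via a Taylor expansion of $\hat g$ (this is where the Lipschitz hypothesis enters), identify the $\min$-term on $[1-\tfrac{\log n}{\log T},\infty)$ and its reflection with $m_n$, and bound the $r_1,r_2$ spikes exactly as you do, with total masses $O(1/\log T)$ and $O((a-1)\log q/\log T)$ matching the two pieces of $\mathcal E_n$. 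All of that is correct and is the paper's argument.

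The one genuine gap is in your treatment of the passage between $\hat g$ and $\hat r$. You set $g(y)=r(y)(1+\pi^2y^2/\log^2T)$ and write $\hat g(\alpha)=\hat r(\alpha)-\hat r''(\alpha)/(4\log^2T)$, but the class $\mathfrak C(\mathbb R)$ gives you only that $\hat r$ is Lipschitz with $\hat r'(a)\ll|a|^{-3}$; it provides no second derivative of $\hat r$, and it does not even guarantee $y^2r(y)\in L^1$, so this identity is purely formal. Your proposed repair --- integrating the correction by parts ``against $F_n$'' --- does not work as stated: moving a derivative onto $F_n$ produces $F_n'(\alpha)$, which carries an extra factor $(\gamma-\gamma')\log T$ from differentiating $T^{i\alpha(\gamma-\gamma')}$ and is not one of the quantities controlled by Conjecture \ref{STPCC}; moreover $F_n$ does not decay, so the boundary terms and the tail of the integral are not obviously harmless. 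The clean route (the one the paper takes) is to go the other way on the Fourier side: with $r(x)=\omega(2\pi x/\log T)g(x)$ the convolution theorem and $\hat\omega(\alpha)=2\pi e^{-4\pi|\alpha|}$ give
\begin{equation*}
\hat r(\alpha)=\int_{\mathbb R}T^{-2|y|}\log T\,\hat g(\alpha-y)\,dy=\hat g(\alpha)+O\!\left(\frac{1}{(1+|\alpha|^3)\log T}\right),
\end{equation*}
using only one derivative of $\hat g$ and the decay $\hat g'(\alpha)\ll\min\{1,|\alpha|^{-3}\}$; the resulting error is then absorbed into $O(\mathcal E_n)$ after integrating against the bounded-plus-two-peaks profile of $F_n$. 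You should replace your second-derivative step by this convolution identity (and, for full rigour, check that $g=r/\omega_s$ inherits the required regularity from $r\in\mathfrak C(\mathbb R)$).
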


\begin{figure}[b]
\begin{tikzpicture}
\begin{axis}
    [ytick distance=1,
    xtick={-1.3,-1,0.4,0.7},
    xticklabels={\hspace{-1cm}$-1-\frac{\Lambda(n)}{\log T}$,-1,\hspace{-1cm}$1-\frac{\log n}{\log T}$,\hspace{1.3cm}$1-\frac{\log n - \Lambda(n)}{\log T}$},
    height=5cm,
    width =17cm,
    xmin=-1.9,xmax=1.3]
\addplot[line width = 0.3mm , blue,smooth,samples=200,domain=-1.8:-1.3]{1};
\addplot[line width = 0.3mm , blue,smooth,samples=200,domain=-1.3:-1]{-10/3*x-10/3)};
\addplot[line width = 0.3mm , blue,smooth,samples=200,domain=-1:0.4]{0};
\addplot[line width = 0.3mm , blue,smooth,samples=200,domain=0.4:0.7]{10/3*x-4/3};
\addplot[line width = 0.3mm , blue,smooth,samples=200,domain=0.7:1.2]{1};
\end{axis}
\end{tikzpicture}
\caption{The function $m_n(\alpha)$}
\end{figure}
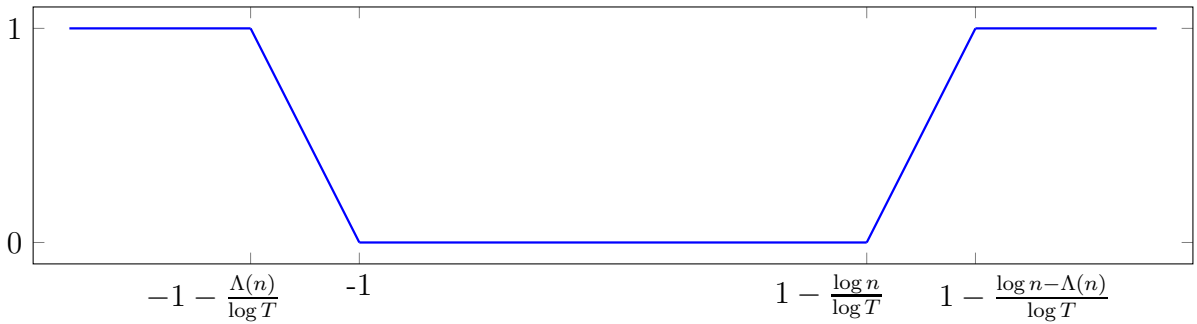

Note that one has $m_n(\alpha) = m_n (-\alpha - \frac{\log n}{\log T})$. Moreover, we would like to point out that if $n$ is a prime then we have \[\frac{\Lambda(n)}{\log T} m_n(\alpha) = H_* \left( \alpha, \frac{\Lambda(n)}{\log T} \right).\] It would be very interesting to understand this identity beyond the fact that it is what is needed in order for the contribution related to the twisted pair correlation to cancel with the triple correlation contribution in the expected way.

Unsurprisingly, Conjecture \ref{TPCC} follows from Conjecture \ref{STPCC} by convolving with a suitable kernel, as in the classical case of Montgomery's pair correlation. In fact, one can show that

\begin{proposition}\label{STPCCimpliesTPCC}
Conjecture \ref{STPCC} implies Conjecture \ref{TPCC}.
\end{proposition}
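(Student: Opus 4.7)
The plan is to adapt the classical convolution argument from Montgomery's pair correlation to recover the integrated statement of Conjecture \ref{TPCC} from the pointwise estimate of Conjecture \ref{STPCC}. The only complication is that the sum in TPCC lacks the weight $\omega(\gamma-\gamma')$ that is built into the definition of $F_n$, which we handle by multiplying and dividing by $\omega$.

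The first step is Fourier inversion. Using $1/\omega(y) = 1 + y^2/4$, I write $r(\tilde y) = \omega(y)(1 + y^2/4)r(\tilde y)$ and compute the Fourier image of $(1+y^2/4)r(\tilde y)$ in the convention $f(y) = \int \hat f(\alpha)T^{i\alpha y}\,d\alpha$. The first summand yields $\hat r(\alpha)$, and two integrations by parts in $\alpha$ identify the image of $y^2 r(\tilde y)$ with $-\frac{1}{\log^2 T}\hat r''(\alpha)$, understood distributionally. Exchanging summation and integration we obtain
\[
-\bigg(\frac{T}{2\pi}\frac{\Lambda(n)}{\sqrt n}\bigg)^{-1}\sum_{T\leq\gamma,\gamma'\leq 2T} n^{i\gamma} r(\tilde\gamma-\tilde\gamma') = \int_{\mathbb R} F_n(\alpha)\hat r(\alpha)\,d\alpha - \frac{1}{4\log^2 T}\int_{\mathbb R} F_n(\alpha)\hat r''(\alpha)\,d\alpha.
\]
The correction term is controlled by one further integration by parts, which shifts a derivative onto $F_n$; using the decay $\hat r'(\alpha) \ll |\alpha|^{-3}$ together with the explicit form of $F_n$ and its breakpoints provided by Conjecture \ref{STPCC}, one checks that this correction is of size $O(1/\log T) \subseteq O(\mathcal E_n)$.

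Next, I evaluate the main term $\int F_n \hat r\,d\alpha$ by substituting the formula from Conjecture \ref{STPCC} on each of the three ranges (and their mirror images under $\alpha \mapsto -\alpha - \log n/\log T$). After symmetrising with $F_n(\alpha) = F_n(-\alpha-\log n/\log T)$, the test function is replaced by $\phi(\alpha) := \tfrac{1}{2}(\hat r(\alpha) + \hat r(-\alpha-\log n/\log T))$. The exponentially localized terms $T^{\pm 2\alpha}\log T$ and $T^{\pm 2(\alpha+\log n/\log T)}\log T$ behave as approximate $\delta$-masses centered at $0$ and $-\log n/\log T$ and produce the $\phi\cdot\delta$ part of the right-hand side of Conjecture \ref{TPCC}. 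The piecewise-linear expression in the third range and its mirror directly reproduce $m_n(\alpha)$. The remaining pieces $r_1(\alpha,n)$ and $r_2(\alpha,n)$ are spikes concentrated at the finitely many points $\alpha = b\log q/\log T$ (where $n=q^a$), each of width $\sim 1/\log T$; integrated against the bounded smooth $\phi$, they contribute $O(\mathcal E_n)$, with the leading $(a-1)\log q/\log T$ term in $\mathcal E_n$ reflecting the number of such spikes when $a\geq 2$.

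The main obstacle I foresee is in the final bookkeeping: checking that the $r_1, r_2$ spikes, combined with the $O(1)$ and $O(1/\log T)$ error terms carried inside the formula for $F_n$ in Conjecture \ref{STPCC}, truly combine to exactly $O(\mathcal E_n)$ uniformly in $n \leq T^{1-\varepsilon}$ — in particular, that the finitely many spikes associated to divisors of $n = q^a$ do not conspire to produce a larger error for prime powers of large exponent.
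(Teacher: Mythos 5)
Your overall strategy is the one the paper uses: express the zero sum as an integral of $\hat r$ against $F_n$, split $\int F_n\hat r$ into the five ranges dictated by Conjecture \ref{STPCC}, let the $T^{\pm 2\alpha}\log T$ and $T^{-2|\alpha+\log n/\log T|}\log T$ terms act as approximate $\delta$-masses via Taylor expansion of the test function, read off $m_n$ from the piecewise-linear range, and bound the $r_1,r_2$ spikes by $O(\mathcal E_n)$ (your accounting of the $(a-1)\frac{\log q}{\log T}$ term coming from the $r_2$ spikes matches the paper's computation exactly). The one place where you diverge is the treatment of the weight $\omega$, and that is where there is a genuine gap.

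You remove $\omega$ by writing $1/\omega(y)=1+y^2/4$, which produces the correction term $\frac{1}{4\log^2 T}\int F_n(\alpha)\hat r''(\alpha)\,d\alpha$. Two problems: (i) membership in $\mathfrak{C}(\mathbb{R})$ only guarantees that $\hat r$ is Lipschitz with $\hat r'(\alpha)\ll|\alpha|^{-3}$, so $\hat r''$ need not exist as a function (equivalently, $y^2r(\tilde y)$ need not be integrable), and the correction term is only defined distributionally; (ii) your proposed fix --- integrate by parts once to shift the derivative onto $F_n$ --- leaves you needing to bound $\frac{1}{\log^2 T}\int F_n'(\alpha)\hat r'(\alpha)\,d\alpha$, but Conjecture \ref{STPCC} is a pointwise asymptotic for $F_n$ with $O(1)$ and $O(1/\log T)$ error terms and gives no control whatsoever on $F_n'$; an asymptotic formula cannot be differentiated. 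The paper avoids both issues by working entirely on the Fourier side: it establishes the identity for the weighted sum $\sum n^{i\gamma}\omega(\gamma-\gamma')g(\tilde\gamma-\tilde\gamma')$, observes that this equals $\sum n^{i\gamma}r(\tilde\gamma-\tilde\gamma')$ with $r(x)=\omega(\tfrac{2\pi x}{\log T})g(x)$, and uses $\hat\omega(\alpha)=2\pi e^{-4\pi|\alpha|}$ to get $\hat r(\alpha)=\int_{\mathbb R}T^{-2|u|}\log T\,\hat g(\alpha-u)\,du=\hat g(\alpha)+O\bigl(\tfrac{1}{(1+|\alpha|^3)\log T}\bigr)$, which requires only one integration by parts and the assumed first-derivative bound. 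You should replace your $1/\omega$ expansion by this convolution identity (or otherwise supply second-order regularity); the rest of your argument then goes through as in the paper.
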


In support of Conjecture \ref{STPCC}, we prove it in restricted ranges for $\alpha$. Defining $\delta_T := \frac{10 \log \log T}{\log T}$, we first obtain an analogue of Montgomery's formula for $F(\alpha)$ corresponding to $|\alpha|<1-\delta_T$.
Namely, for small values of $\alpha$, we prove the asymptotic formulae of Conjecture \ref{STPCC} assuming RH only.

\begin{proposition}\label{TPC_SmallAlpha}
    Assume RH. 
    Fix $\varepsilon>0$ and let $n=q^a\leq T^{1-\varepsilon}$ be a prime power.
    For $0<\alpha<1-\frac{\log n}{\log T}-\delta_T$, we have
    \begin{equation}\notag
        F_n(\alpha) =
        T^{-2\alpha}\bigg(\log T+\frac{\log T}{n^2} +O(1)\bigg) -r_1(\alpha,n)
        + O\bigg(\frac{1}{\log T}\bigg).
    \end{equation}
    Moreover, for $-\frac{\log n}{\log T}\leq \alpha\leq 0$, we have 
    \begin{equation}\notag
        F_n(\alpha) =
        T^{2 \alpha} (\log T+O(1))  + \frac{\log T+O(1)}{(n T^\alpha)^2} 
        - r_2(\alpha,n) 
        + O\bigg(\frac{1}{\log T}\bigg)  .
    \end{equation}
\end{proposition}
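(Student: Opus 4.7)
The approach is to extend Montgomery's original proof of his pair correlation asymptotic to the twisted setting. Under RH, the explicit formula for $\zeta'(s)/\zeta(s)$ yields Montgomery's identity, which expresses the zero sum
\[ Z_x(t) := \sum_\gamma \frac{x^{i\gamma}}{\tfrac14 + (t-\gamma)^2} \]
as a Dirichlet polynomial in $t$ of length $\asymp x$, with prime-power coefficients weighted by the size of $m$ relative to $x$, up to an error of size $O(\log^2(2+|t|))$. Observing that $n^{i\gamma} (T^\alpha)^{i\gamma} = (nT^\alpha)^{i\gamma}$, the twist by $n^{i\gamma}$ in the definition of $F_n(\alpha)$ amounts to invoking Montgomery's identity with $x = nT^\alpha$ in one factor and $x = T^\alpha$ in the other. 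Combined with the elementary Plancherel-type identity
\[ \int_{\mathbb R}\frac{dt}{(\tfrac14+(t-\gamma)^2)(\tfrac14+(t-\gamma')^2)} = 2\pi\,\omega(\gamma-\gamma'), \]
this reduces the evaluation of $F_n(\alpha)$ to the mixed integral
\[ I(\alpha,n) := \int_T^{2T} Z_{nT^\alpha}(t)\,\overline{Z_{T^\alpha}(t)}\,dt, \]
up to boundary contributions from zeros outside $[T,2T]$, which are controlled by standard density estimates.

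Substituting the prime-side representation into both factors, $I(\alpha,n)$ becomes a weighted double sum over prime-power pairs $(m_1,m_2)$, integrated against a product of $m_j$-dependent phases and Montgomery's threshold phases $x^{\pm it}$. Integrating in $t$ via the Montgomery--Vaughan mean value theorem, the dominant contributions come from \emph{resonant pairs} where the overall phase vanishes. The ``diagonal'' resonance within the natural cross-terms produces the main terms $T^{-2\alpha}\log T$ and $T^{-2\alpha}\log T/n^2$, with the factor $1/n^2$ arising from the extra $1/x$ weighting of the ``above-threshold'' piece with $x = nT^\alpha$. The shifted resonance aligning $m_1 = n m_2$ across the two factors produces the correction $-r_1(\alpha,n)$: the integrand degenerates to $\Lambda(n m_2)\Lambda(m_2)/(m_2\sqrt n)$ weighted by the overlap $\min\{m_2/T^\alpha, T^\alpha/m_2\}^2$ of the two Montgomery cutoffs at $T^\alpha$ and $nT^\alpha$, and after the normalisation $(T\Lambda(n)/\sqrt n)^{-1}$ the sum over $m_2$ matches the stated $r_1(\alpha,n)$ exactly. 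All other off-diagonal pairs are controlled by the Montgomery--Vaughan error $(N+T)\sum|a_m|^2$ with $N \asymp nT^\alpha$; this gives the target $O(1/\log T)$ precisely when $nT^\alpha \le T/(\log T)^c$, i.e. under the hypothesis $\alpha < 1 - \log n/\log T - \delta_T$ with $\delta_T = 10 \log\log T/\log T$.

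The main obstacle is to carry out this bookkeeping uniformly for $n = q^a \le T^{1-\varepsilon}$: one must identify all four diagonal contributions together with the $r_1$-resonance, and verify that no other off-diagonal pair produces a contribution exceeding $O(1/\log T)$. In particular, the Montgomery--Vaughan bound must be applied with sharp coefficient norms, not crude majorants, in order to preserve the logarithmic precision. The complementary range $-\log n/\log T \le \alpha \le 0$ then follows by combining the above with the self-dual symmetry $F_n(\alpha) = F_n(-\alpha - \log n/\log T)$, which is immediate from the definition, the evenness of $\omega$, and a relabelling of the summation variables; the additional main term $\log T/(nT^\alpha)^2$ is the image under this symmetry of the $T^{-2\alpha}\log T/n^2$ contribution, and the alternate resonance $r_2(\alpha,n)$ arises from pairs $(m_1,m_2)$ with $m_1 m_2 = n$, i.e., from non-trivial factorisations of the prime power $n$.
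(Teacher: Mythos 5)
Your treatment of the positive range $0<\alpha<1-\frac{\log n}{\log T}-\delta_T$ is essentially the paper's argument: Montgomery's explicit formula applied with lengths $nT^\alpha$ and $T^\alpha$, the integral representation of $\omega$, and a diagonal analysis in which the resonance $m=nl$ between the two prime sums produces $-r_1(\alpha,n)$. One caveat: the term $y^{-1+it}\big(\log\frac{t}{2\pi}+\frac{\zeta'}{\zeta}(\tfrac32-it)\big)$ in Montgomery's identity cannot be absorbed into an ``$O(\log^2(2+|t|))$'' error as your first paragraph suggests — it is precisely the source of both main terms ($T^{-2\alpha}\log T$ from its cross-correlation with the prime sum at the diagonal $m=n$, and $T^{-2\alpha}\log T/n^2$ from its self-correlation via the diagonal $b=n$ of the $\zeta'/\zeta$ Dirichlet series) — though your later description indicates you do intend to keep it.

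The genuine gap is the negative range. The involution $\alpha\mapsto-\alpha-\frac{\log n}{\log T}$ maps the interval $[-\frac{\log n}{\log T},0]$ onto \emph{itself} (swapping its endpoints and fixing $-\frac{\log n}{2\log T}$); the image of the positive range you have established is $(-1+\delta_T,-\frac{\log n}{\log T})$, which is disjoint from the target interval. So the case $-\frac{\log n}{\log T}\le\alpha\le 0$ cannot be deduced from the positive case by symmetry and relabelling: it requires a second direct computation, which is structurally different. There both $nT^\alpha$ and $T^{-\alpha}$ lie in $[1,n]$, the prime-sum diagonal becomes $ml=n$ (giving $-r_2$, as you correctly identify), and the roles of the cross terms change — the contribution $\frac{\log T}{(nT^\alpha)^2}$ now comes from the $\log\frac{t}{2\pi}$-piece of the length-$nT^\alpha$ factor against the prime sum of the other factor (diagonal $l=n$), while the self-correlation of the two $\log\frac{t}{2\pi}$-pieces, which produced the $1/n^2$ term for positive $\alpha$, is now negligible. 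A secondary concern is your off-diagonal bound for positive $\alpha$: a black-box Montgomery--Vaughan/Hilbert-type estimate of the shape $\sum_m|a_m|^2\delta_m^{-1}$ carries an extra power of $n$ beyond what uniformity in $n\le T^{1-\varepsilon}$ tolerates; the paper instead uses the first-derivative test $\int_T^{2T}(\frac{nl}{m})^{it}\,dt\ll m/|nl-m|$ followed by the bespoke estimate \eqref{ClaimDoubleSum}, which splits $|nl-m|<nl/2$ from its complement and exploits the sparsity of prime powers near $nl$.
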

Due to the symmetry of $F_n(\alpha)$ discussed above, Theorem \ref{TPC_SmallAlpha} provides an asymptotic formula for $F_n(\alpha)$ when $-1+\delta_T<\alpha<1-\frac{\log n}{\log T}-\delta_T$.

For bigger $\alpha$, we prove Conjecture \ref{STPCC} conditionally on the following version of the Hardy-Littlewood conjecture.

Set
\[ \mathfrak{S} := 2 \prod_{p > 2} \left( 1 - \frac{1}{(p-1)^2} \right) \]
and for any integer $h \ne 0$, let 
\[ \mathfrak{S}(h) := \begin{cases}
    0, &\mbox{if } h \text{ is odd}, \\
    \mathfrak{S} \prod_{\substack{ p \, | \, h \\ p > 2 }} \frac{p-1}{p-2}, &\mbox{if } h \text{ is even}.
\end{cases} \]
This function is often referred to as the singular series. Let us further define
\[ \mathfrak{S}_n(h) := \delta((n,h) = 1) \mathfrak{S}(n h). \]

\begin{conjecture}[A uniform version of Hardy-Littlewood]\label{HLC}
    For any $\varepsilon>0$,
    $$\sum_{m\leq x} \Lambda\bigg(\frac{m}{n}\bigg) \Lambda(m \pm h) 
    =  \frac{\mathfrak{S}_n(h)}{n} x + O_\varepsilon(x^{1/2+\varepsilon})$$
    uniformly for $1\leq h,n\leq x^{1-\varepsilon}$.
\end{conjecture}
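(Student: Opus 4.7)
The plan is to treat this as a binary additive problem via the Hardy--Littlewood circle method, exploiting the fact that the restriction $n\mid m$ in $\Lambda(m/n)$ converts the left-hand side (upon substituting $m=kn$) into
$$\sum_{k\leq x/n}\Lambda(k)\Lambda(kn\pm h) = \int_0^1 \tilde S_n(\alpha)\,\overline{S(\alpha)}\, e(\mp h\alpha)\,d\alpha,$$
where $\tilde S_n(\alpha) := \sum_{k\leq x/n}\Lambda(k) e(kn\alpha)$ and $S(\alpha) := \sum_{m\leq x}\Lambda(m) e(m\alpha)$. I would then decompose $[0,1]$ into major arcs around rationals $a/q$ with $q \leq Q$ (for some $Q$ to be optimised) and the complementary minor arcs.

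On the major arcs, Siegel--Walfisz (or, for the target power-saving, GRH-type inputs) allows one to approximate $S(\alpha)$ by its main contribution coming from primes in progressions modulo $q$; substituting this into the integral and carrying out a local analysis at each prime $p$ — where one counts residues $k\pmod p$ for which both $k$ and $kn\pm h$ are coprime to $p$ — yields a main term of shape $\frac{1}{n}\mathfrak{S}_n(h)\,x$. The indicator $\delta((n,h)=1)$ emerges naturally: a prime $p\mid(n,h)$ forces $p\mid kn\pm h$ only when $p\mid k$, which is incompatible with $(k,p)=1$, making the local factor vanish.

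The main obstacle, and the actual depth of the conjecture, is estimating the minor arc contribution with the claimed error $O_\varepsilon(x^{1/2+\varepsilon})$ uniformly in the large range $1 \le h, n \le x^{1-\varepsilon}$. Even assuming RH/GRH, standard pointwise bounds only yield $|S(\alpha)|\ll x^{1/2+\varepsilon}$ on minor arcs, and combining with Plancherel applied to $\tilde S_n$ gives an error bound on the order of $x^{1-\varepsilon}$ — far too weak. A genuine square-root saving of the type demanded would require some restriction-type estimate for primes, or new cancellation in prime-twisted exponential sums, well beyond current technology.

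An alternative avenue would be to attempt the dispersion method of Linnik, or the Bombieri--Friedlander--Iwaniec machinery; these can in principle furnish asymptotics for shifted convolutions on average over $h$ or in arithmetic progressions, but achieving the pointwise uniformity required here seems out of reach. For this reason I expect that this statement genuinely needs to remain a conjectural hypothesis rather than a theorem provable by current methods, and the most productive approach may instead be to show that strictly weaker forms suffice for the applications in the paper.
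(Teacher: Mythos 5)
This statement is presented in the paper as a conjecture (an assumption underlying Theorem \ref{TPC_IntermediateAlpha}), and the paper offers no proof of it; your conclusion that it lies beyond current technology and must remain a hypothesis is exactly consistent with the paper's treatment. Your discussion of why the circle method fails on the minor arcs (only $x^{1-\varepsilon}$ savings from pointwise bounds plus Plancherel) is a reasonable justification for why the statement is conjectural rather than provable.
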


We recall that, in the classical case, Montgomery \cite{Montgomery1} suggested that assuming the Hardy-Littlewood conjecture one should be able to prove that $F(\alpha)\sim 1$ for $1\leq \alpha\leq 2-\delta$. This was carried out by Bolanz \cite{Bolanz} for $1\leq \alpha\leq\frac{3}{2}-\delta$, and by Goldston and Gonek \cite[Example 4]{GG} in the full range $1\leq \alpha\leq 2-\delta$.
In the following, we seek for an analogue of these results for the twisted pair correlation function $F_n(\alpha)$. Roughly speaking, we consider the range $1-\frac{\log n}{\log T}\leq\alpha\leq 2-C\frac{\log n}{\log T}$ for some positive constant $C$, and we assume Conjecture \ref{HLC}. We show that Conjecture \ref{STPCC} holds for a smoothing of $F_n(\alpha)$. Namely, we define
\begin{equation}\label{SmoothOmega}
    \omega_{\psi_U}(\gamma,\gamma')
    = \frac{2}{\pi} \int_{\mathbb R} \frac{1}{1+(t-\gamma)^2}\frac{1}{1+(t-\gamma')^2} \psi_U\bigg(\frac{t}{T}\bigg) dt.
\end{equation}
Here, $U$ is a real parameter of size $(\log T)^B$ for some $B>0$, and $\psi_U(t)$ is a real-valued weight function such that: 
$\psi_U$ is supported on $[1,2]$,
$\psi_U(t)=1$ if $1+U^{-1}\leq t\leq 2-U^{-1}$,
and $\psi_U^{(j)}(t) \ll U^j$ for all $j\in\mathbb N$.
In these notations, we define
\begin{equation}\notag
    F_n(\alpha;\psi_U) := -\bigg(\frac{T}{2\pi}\frac{\Lambda(n)}{\sqrt{n}} \bigg)^{-1}
    \sum_{T\leq \gamma,\gamma'\leq 2T} n^{i\gamma} T^{i\alpha(\gamma-\gamma')}\omega_{\psi_U}(\gamma,\gamma').
\end{equation}
The effect of the factor $\psi_U$ is that of smoothing the range of summation for the zeros $\gamma$ and $\gamma'$, see e.g. Lemma \ref{IntermediateRangeExplForm}. In practice, this boils down to computing smoothed moments of long Dirichlet polynomials. 
In the setting of \cite[Example 4]{GG}, it is straightforward to pass between the smooth and the sharp cut-off. In our setup for the twisted pair correlation, the transition from $F_n(\alpha)$ to $F_n(\alpha;\psi_U)$ seems to incur an extra factor of size $\sqrt{n}$ in the error term. This appears to cause trouble as soon as $n$ is moderately large.


\begin{theorem}\label{TPC_IntermediateAlpha}
Assume RH and Conjecture \ref{HLC}.
Uniformly for all prime powers $n$ and uniformly for $\alpha$ such that 
\begin{equation}\label{RangeIntermediatealpha}1-\frac{\log n}{\log T} \leq \alpha \leq 2- 48\frac{\log n}{\log T},
\end{equation}
we have
$$F_n(\alpha;\psi_U) = \min\bigg\{ 1 , \frac{\log T}{\Lambda(n)}\bigg(\alpha-1+\frac{\log n}{\log T}\bigg)\bigg\} 
+ O\bigg(\frac{\log\log T}{\Lambda(n)}\bigg).$$
\end{theorem}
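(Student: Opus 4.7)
The strategy closely mirrors Goldston--Gonek's treatment \cite{GG} of the classical $F(\alpha)$ on $[1,2-\delta]$, modified to carry the twist $n^{i\gamma}$ through the argument. The starting point is the smoothed explicit formula captured in Lemma~\ref{IntermediateRangeExplForm}: applied to each of the two zero-sums in $F_n(\alpha;\psi_U)$, it converts the pair correlation sum into a bilinear form in Dirichlet polynomials with $\Lambda$-coefficients. The twist $n^{i\gamma}T^{i\alpha\gamma}$ shifts the ``frequency'' of the first polynomial from $T^\alpha$ to $nT^\alpha$; the range $\alpha\in[1-\log n/\log T,\,2-48\log n/\log T]$ corresponds precisely to both polynomials being ``long'' (with lengths at least of order $T$ and at most a controlled power of $T$ below $T^2$), which is the regime where one is forced to invoke Conjecture~\ref{HLC}.

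Expanding the resulting bilinear form and substituting $m_1=nm_2+h$ reorganises the sum as
\[
\sum_h \Phi_\alpha(h) \sum_{m}\Lambda\!\left(\frac{m}{n}\right)\Lambda(m+h) + (\text{``diagonal'' contribution from } m_1=nm_2),
\]
with a smooth weight $\Phi_\alpha$ inherited from the smoothed explicit formula and the kernel $\omega_{\psi_U}$. Applying Conjecture~\ref{HLC} to each inner sum replaces it by $\frac{\mathfrak S_n(h)}{n}(nT^\alpha)+O_\varepsilon((nT^\alpha)^{1/2+\varepsilon})$. The residual main term $\sum_h \Phi_\alpha(h)\mathfrak S_n(h)$ can then be evaluated by expanding $\mathfrak S_n(h)$ through its standard Ramanujan-sum/Euler-product decomposition, interchanging summations, and recognising the dominant piece as the predicted piecewise-linear min-function in $\alpha$. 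Combined with the diagonal contribution (which requires $m_1$ to be a prime power divisible by $n$ with $m_1/n$ also a prime power) this should yield the stated main term exactly.

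The principal obstacle is uniform control in $n$ of the error terms. The Hardy--Littlewood error $O_\varepsilon((nT^\alpha)^{1/2+\varepsilon})$, summed over $h$ in its effective range and divided by the normalising factor $T\Lambda(n)/\sqrt n$, must stay below the target $\log\log T/\Lambda(n)$; the specific constant $48$ in the upper bound $\alpha\le 2-48\log n/\log T$ arises from a careful balancing of these exponents and the unavoidable factor $\sqrt n$ introduced by the smoothing discussed before the theorem statement. A secondary difficulty is evaluating the singular series sum to sufficient precision in $n$ to recover the sharp min-function rather than a weaker bound, and ensuring that the boundary terms of $\psi_U$ (with $U=(\log T)^B$ and $B$ chosen large enough) do not introduce errors competing with the target accuracy $\log\log T/\Lambda(n)$.
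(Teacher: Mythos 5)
Your proposal follows essentially the same route as the paper: the smoothed explicit formula of Lemma \ref{IntermediateRangeExplForm}, mean values of the resulting long Dirichlet polynomials via the Goldston--Gonek machinery fed by Conjecture \ref{HLC}, and then an evaluation of the twisted singular series average whose $y<q$ versus $y\ge q$ dichotomy produces the $\min$ function. One small correction: the constant $48$ comes from the error terms in the Goldston--Gonek corollaries (specifically the choice of the parameter $\nu$ needed to make $nT^{-\nu/2}$ small), not from the $\sqrt{n}$ loss in passing between the sharp and smooth cutoffs, which is a separate issue affecting only the unsmoothed $F_n(\alpha)$.
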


We remark that the constant $48$ appearing in Equation \eqref{RangeIntermediatealpha} is not particularly meaningful; we refer the reader to Remark \ref{Remark48} for further details.

The paper is structured as follows. Section \ref{SecPrelim} records various preliminary results. In Sections \ref{SecMainEstimates}-\ref{SecFinalMainEstimates}, we prove Theorem \ref{MainTheorem} assuming the conjectures about the correlation of zeros. Finally, Proposition \ref{STPCCimpliesTPCC}, Theorem \ref{TPC_SmallAlpha}, and Theorem \ref{TPC_IntermediateAlpha} are proven in Sections \ref{SecSTPCCimpTCC}, \ref{SecSmallAlpha}, and \ref{SecIntermAlpha} respectively.

\subsection*{Acknowledgments} 
The authors would like to thank S. Bettin, W. Heap, and J. Keating for several inspiring conversations.
This material is based upon work supported by the Swedish Research Council under grant no. 2021-06594 while the authors were at the Institut Mittag-Leffler during the spring semester of 2024.
The first author is a member of the INdAM group GNAMPA and is supported by the Fonds de recherche du Qu\'ebec - Nature et technologies, Projet de recherche en \'equipe 300951.
The second author was supported by the Swiss National Science Foundation (SNSF) grant no. P400P2$\_$199303. He would like to thank KTH and the University of Oxford for hosting him while this work was created.

\section{Preliminary estimates}\label{SecPrelim}

\subsection{Elementary bounds}

\begin{lemma}\label{CosLogInt}
    For fixed $k \in \mathbb{N}_0$, we have
    \begin{equation}
        \frac{1}{T}\int_T^{2 T} \cos( \rho t) \left(\log \frac{t}{2 \pi} \right)^k dt
        = \begin{cases}
            (\log T)^k + O \left( (\log T)^{k-1} \right), &\mbox{ if } \rho = 0, \\
            O \left( \frac{(\log T)^k}{|\rho| T} \right), &\mbox{ if } \rho \ne 0.
        \end{cases}
    \end{equation}
\end{lemma}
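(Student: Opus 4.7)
The plan is to handle the two cases separately, with only elementary manipulations needed in each.

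For the diagonal case $\rho = 0$, I would note that for $t \in [T, 2T]$ one has $\log\frac{t}{2\pi} = \log T + O(1)$, and therefore by the binomial theorem $\left(\log\frac{t}{2\pi}\right)^k = (\log T)^k + O((\log T)^{k-1})$ uniformly in $t$. Integrating this over $[T, 2T]$ and dividing by $T$ immediately yields the claimed asymptotic. (For the explicit main term, one could alternatively compute the antiderivative of $(\log \frac{t}{2\pi})^k$ via repeated integration by parts, but the $O((\log T)^{k-1})$ error term makes the cruder bound above sufficient.)

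For the oscillatory case $\rho \neq 0$, the natural move is a single integration by parts with $u = \left(\log\frac{t}{2\pi}\right)^k$ and $dv = \cos(\rho t)\,dt$, so that $du = \frac{k}{t}\left(\log\frac{t}{2\pi}\right)^{k-1}dt$ and $v = \frac{\sin(\rho t)}{\rho}$. The boundary term contributes at most $O\!\left(\frac{(\log T)^k}{|\rho|}\right)$ since $|\sin|\le 1$, and the remaining integral is bounded by
\[
\int_T^{2T} \frac{k}{|\rho|\, t}\left(\log \frac{t}{2\pi}\right)^{k-1} dt \ll \frac{(\log T)^{k-1}}{|\rho|},
\]
absorbing into the boundary contribution. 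Dividing by $T$ gives the claimed $O\!\left(\frac{(\log T)^k}{|\rho| T}\right)$.

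There is no real obstacle here; the only mild subtlety is that for $k = 0$ the "remaining integral" after integration by parts is empty, which is harmless since the boundary term alone already gives $O(1/(|\rho|T))$ after dividing by $T$, matching the claim. The whole argument is a couple of lines and requires no input beyond integration by parts and a first-order Taylor expansion of the logarithm on $[T, 2T]$.
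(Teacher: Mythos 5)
Your proof is correct and is exactly the argument the paper intends: its entire proof reads ``Integration by parts,'' and your two cases (a first-order expansion of $\log\frac{t}{2\pi}$ on $[T,2T]$ for $\rho=0$, and a single integration by parts for $\rho\neq 0$) fill in precisely those details. Nothing to add.
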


\begin{proof}
    Integration by parts.
\end{proof}

\subsection{Estimates involving zeros of the Riemann zeta function}

We will make use of the following version of Landau's formula due to Gonek \cite{Gonek1}.

\begin{lemma}[Landau-Gonek]\label{GoneksFormula}
    Assume RH and let $2 \le x \le T$. Uniformly for all integers $b < a \le x$, we have
    \begin{equation}
        \frac{1}{T}\sum_{T < \gamma \le 2 T} \left( \frac{a}{b} \right)^{i \gamma} = -\frac{\Lambda(a/b)}{2 \pi} \sqrt{\frac{b}{a}} + O \left( \frac{x (\log T)^2}{T} \right).
    \end{equation}
\end{lemma}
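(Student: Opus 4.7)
The plan is to deduce this from Gonek's original explicit formula \cite{Gonek1}, which states, for $x, T > 1$,
\[ \sum_{0 < \gamma \leq T} x^{\rho} = -\frac{T}{2\pi} \Lambda(x) + E(x, T), \]
with $\Lambda(x) = 0$ unless $x$ is a prime power, and $E(x, T)$ a concrete error term of size roughly $x \log(2xT) \log\log(3x)$ together with contributions of the form $\log x \cdot \min(T, x/\langle x \rangle)$ and $\log(2T) \cdot \min(T, 1/\log x)$. Under RH, $\rho = \tfrac{1}{2} + i\gamma$, so $x^{\rho} = \sqrt{x}\, x^{i\gamma}$. Applying this identity with $x = a/b \geq 1 + 1/b$ at the two heights $T$ and $2T$, subtracting, and dividing through by $T\sqrt{a/b}$ produces the main term $-\Lambda(a/b)\sqrt{b/a}/(2\pi)$ claimed in the lemma, since the ``$-T\Lambda(x)/(2\pi)$'' contributions subtract to give precisely $-T\Lambda(a/b)/(2\pi)$ in the difference.

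The remaining task is to show that the difference of the error terms $E(a/b, 2T) - E(a/b, T)$, normalised by $T\sqrt{a/b}$, is at most $O(X (\log T)^2 / T)$, where I write $X$ for the upper bound from the lemma statement to avoid the notational clash with Gonek's $x$. The first component is harmless because $a/b \leq a \leq X \leq T$. The ``$\min(T, 1/\log x)$'' component is controlled using the elementary lower bound $\log(a/b) \geq \log(1 + 1/b) \gg 1/b \gg 1/X$, which makes the min at most $O(X)$. The delicate piece is $\log x \cdot \min(T, x/\langle x\rangle)$: one splits into the case $b \nmid a$, where $\langle a/b \rangle \geq 1/b \geq 1/X$ gives directly an acceptable bound, and the case $b \mid a$, where $a/b$ itself is a (large) integer so that the $1/\sqrt{a/b}$ factor picked up from the RH-normalisation compensates the $T \log(a/b)$ loss.

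The main obstacle, such as there is one, is exactly this uniform error analysis across all admissible pairs $(a,b)$ with $1 \leq b < a \leq X$, carefully playing the arithmetic of $\langle a/b \rangle$ off against the gain from $1/\sqrt{a/b}$. No step is conceptually difficult; the argument is a direct adaptation of the bookkeeping in \cite{Gonek1} to the interval $(T, 2T]$, and no new idea beyond tracking the error terms is required.
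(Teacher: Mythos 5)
Your approach is the same as the paper's: apply Gonek's Theorem 1 at heights $T$ and $2T$, subtract, use RH to write $x^{\rho}=\sqrt{x}\,x^{i\gamma}$, and then bound the three components of Gonek's error term uniformly in $a/b$. The main term and the treatment of the first and third error components are correct and match the paper.

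The one step that does not work as written is your handling of the $\log x\cdot\min\left(T,x/\langle x\rangle\right)$ term in the case $b\mid a$. There you propose to take the $T$ branch of the minimum and let the normalising factor $1/\sqrt{a/b}$ absorb it; but this only yields $\log(a/b)/\sqrt{a/b}\ll 1$, which is \emph{not} $O\left(x(\log T)^{2}/T\right)$ when $x$ is small compared to $T/(\log T)^{2}$ (e.g.\ for bounded $x$ the right-hand side tends to $0$ while $\log(a/b)/\sqrt{a/b}$ does not). The fix is immediate and removes the need for the case split altogether: one always has $\langle a/b\rangle\ge 1/b$ (if $b\nmid a$ the distance from $a/b$ to any integer, hence to any prime power, is at least $1/b$; if $b\mid a$ the distance to the nearest \emph{other} prime power is at least $1\ge 1/b$), so
\[
\log\tfrac{a}{b}\cdot\min\!\left(T,\frac{a/b}{\langle a/b\rangle}\right)\le \log\tfrac{a}{b}\cdot\frac{a}{b\,\langle a/b\rangle}\le a\log a\le x\log T,
\]
which after division by $T\sqrt{a/b}\ge T$ is acceptable. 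This is exactly the bound $\frac{a}{b\langle a/b\rangle}\le a\le x$ used in the paper's proof.
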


\begin{proof}
    This is an immediate application of \cite[Theorem 1]{Gonek1}, with the adaptation of the Theorem from $0 < \gamma \le T$ to $T < \gamma \le 2T$ going through without difficulty. Denoting, as there, by $\langle \frac{a}{b} \rangle$ the distance of $\frac{a}{b}$ to the nearest prime power other than $\frac{a}{b}$, note that
    \[ \frac{a}{b \langle \frac{a}{b} \rangle} \le a \le x. \]
    Moreover, one has
    \[ \frac{1}{\log \frac{a}{b}} \gg \frac{1}{x}. \]
    This gives the claim.
\end{proof}

We will often make use of the following technical Lemma.

\begin{lemma}\label{6may.2}
    Let $\phi:\mathbb R\to\mathbb R\cup\{\infty\}$ be a function such that $\phi(v)\ll v^{-2}$ for $|v|>1$, and $\phi\in L^p(\mathbb R)$ for all $1\leq p < \infty$. Uniformly for all functions $b:\mathbb R\to \mathbb C$ such that $|b(x)|\leq 1$, denoting $L:=\log T$, we have
    \begin{align}\tag{a}\label{a}
        &\int_T^{2T}b(t) \sum_{\gamma} \phi((t-\gamma)\log x) dt
        = \int_{\mathbb R} b(t) 
        \sum_{T\leq \gamma\leq 2T} \phi((t-\gamma)\log x) dt + O(\sqrt{T}L)\\ \tag{b}\label{b}
        &\int_T^{2T}b(t) \bigg(\sum_{\gamma} \phi((t-\gamma)\log x)\bigg)^2 dt
        = \int_{\mathbb R} b(t) \bigg(\sum_{T\leq \gamma\leq 2T} \phi((\gamma-t)\log x)\bigg)^2dt + O(\sqrt{T}L^2)
        \\ \tag{c}\label{c}
        &\int_T^{2T}  \bigg(\sum_{\gamma} \phi((t-\gamma)\log x)\bigg)^3 dt
        = \int_{\mathbb R} \bigg(\sum_{T\leq \gamma\leq 2T} \phi((\gamma-t)\log x)\bigg)^3dt + O(\sqrt{T}L^3). 
    \end{align}
\end{lemma}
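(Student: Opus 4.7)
The plan is to reduce all three statements to a common estimate for the discrepancy between summing over all zeros of $\zeta$ and summing only over $\gamma\in[T,2T]$, coupled with the discrepancy between integrating $t$ over $[T,2T]$ and over $\mathbb{R}$. Introduce
$$S_I(t):=\sum_{T\le \gamma \le 2T}\phi\bigl((t-\gamma)\log x\bigr), \qquad S_O(t):=\sum_{\gamma\notin[T,2T]}\phi\bigl((t-\gamma)\log x\bigr),$$
so that $\sum_\gamma \phi((t-\gamma)\log x)=S_I(t)+S_O(t)$. For part \eqref{a} the difference of the two sides collapses to
$$\int_T^{2T} b(t)\,S_O(t)\,dt \;-\; \int_{\mathbb{R}\setminus[T,2T]}b(t)\,S_I(t)\,dt,$$
while for \eqref{b} and \eqref{c} the binomial expansion of $(S_I+S_O)^n-S_I^n$ (with $n=2,3$), combined with the integral of $S_I^n$ over $\mathbb{R}\setminus[T,2T]$, produces mixed integrals of the form $\int|S_I|^j|S_O|^{n-j}$ with $n-j\ge 1$, together with a pure tail integral of $|S_I|^n$.

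The central bound I would establish is $\int_T^{2T}|S_O(t)|\,dt\ll \log^2 T$, together with its symmetric counterpart $\int_{\mathbb{R}\setminus[T,2T]}|S_I(t)|\,dt\ll\log^2 T$. To see the former, swap sum and integral via Fubini and, for each fixed $\gamma\notin[T,2T]$, change variables $u=(t-\gamma)\log x$ to get
$$\int_T^{2T}|\phi((t-\gamma)\log x)|\,dt=\frac{1}{\log x}\int_{(T-\gamma)\log x}^{(2T-\gamma)\log x}|\phi(u)|\,du.$$
For zeros whose distance to $[T,2T]$ exceeds $1/\log x$, the decay hypothesis $\phi(v)\ll v^{-2}$ yields a bound $\ll 1/(\mathrm{dist}(\gamma,[T,2T])(\log x)^2)$, and summation using the Riemann--von Mangoldt density estimate (there are $O(\log T)$ zeros per unit interval near height $T$) produces $O(\log^2 T/\log^2 x)$. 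For the $O(\log T)$ zeros closer to the boundary than $1/\log x$, the local $L^1$-mass of $\phi$ absorbs the contribution.

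For parts \eqref{b} and \eqref{c}, I would estimate the mixed integrals by Hölder's inequality against analogous $L^p$-bounds on $S_I$ and $S_O$, where the latter rely on the hypothesis $\phi\in L^p$ for every $p<\infty$. All of the resulting bounds are polylogarithmic in $T$, hence comfortably dominated by the stated errors $\sqrt{T}L^n$; the lemma is thus stated in a deliberately loose form which suffices for the downstream applications.

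The main obstacle will be the precise handling of zeros lying within $1/\log x$ of the endpoints $T$ or $2T$, where the $v^{-2}$ decay of $\phi$ is unavailable and $\phi$ need not be bounded. I would handle this by applying Hölder's inequality in small intervals around such boundary zeros, exploiting $\phi\in L^p$ for all $p<\infty$ together with the fact that only $O(\log T)$ boundary zeros can contribute; this keeps all error terms polylogarithmic and therefore well within the stated tolerance.
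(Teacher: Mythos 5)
Your decomposition into $S_I$ and $S_O$, the treatment of far zeros via the $v^{-2}$ decay plus the Riemann--von Mangoldt density, and the use of $\phi\in L^p$ for the zeros within $1/\log x$ of the endpoints is essentially the paper's own argument (there the pieces are called $A,B$ and $C,D$, and the cross terms in $(S_I+S_O)^3-S_I^3$ are bounded exactly as you propose). One quantitative claim in your sketch is off, though it does not sink the proof: the error terms are \emph{not} all polylogarithmic. A global H\"older bound on the dominant mixed term $\int_T^{2T}|S_I|^2|S_O|\,dt$ costs $\|S_I\|_{L^4([T,2T])}^2\|S_O\|_{L^2([T,2T])}$, and $\|S_I\|_{L^4([T,2T])}\gg T^{1/4}$ simply because $S_I$ is of size $\log T$ on most of an interval of length $T$; this yields $\sqrt{T}(\log T)^3$, which is precisely where the paper's stated error $O(\sqrt{T}L^3)$ comes from (the paper bounds $\sup D\cdot\int A^2\ll\frac{\log T}{\sqrt T}\cdot T(\log T)^2$ on the bulk). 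To actually get a polylogarithmic bound you would need to exploit the decay $|S_O(t)|\ll\frac{\log T}{t-T+1}+\frac{\log T}{2T-t+1}$ in the bulk via a dyadic/local H\"older rather than a global one. Since the lemma only asks for $O(\sqrt{T}L^n)$, either version of your argument suffices, so the proof goes through.
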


In most of our applications, we will arrive at expressions of the shape on the left-hand side. Restricting the range of summation of zeros and extending the range of integration then allows us to interpret the result as a Fourier transform, especially if we think of the function $b$ as a phase. 

One could certainly generalise the Lemma to any number of zeros without much trouble.

\begin{proof} 
    This is a natural generalization of \cite[Lemma 3.2.1]{LMQ}.
    We only prove \eqref{c}, as the other two can be proven in the same way with easier calculations, and the presence of the 1-bounded function $b$ does not change the argument.
    First we show how to truncate the sums over zeros to the range $[T,2T]$, then we will extend the range of integration. 
    For starters, we write
    \begin{equation}\begin{split}\label{20may.00}
        \bigg(\sum_{\gamma} \phi((\gamma-t)\log x)\bigg)^3
        &= \bigg(\sum_{T\leq \gamma\leq 2T} \phi((\gamma-t)\log x)\bigg)^3 \\
        &\quad +O\bigg(\bigg(\sum_{\substack{\gamma}} \phi((\gamma-t)\log x)\bigg)^2 \sum_{\substack{\gamma''\not\in[T,2T]}} \phi((\gamma''-t)\log x) \bigg).
    \end{split}\end{equation}
    Since $\phi$ is such that $\phi(v)\ll v^{-2}$ for $|v|>1$, we have that (see \cite[Equations (3.6) and (3.7)]{LMQ}), for $t\neq\gamma$
    \begin{equation}\label{20may.01}
        \sum_{\gamma} \phi((\gamma-t)\log x) 
        = \sum_{|\gamma-t| \leq \frac{1}{\log x}} \phi((\gamma-t)\log x) + O(\log (|t|+2))
        =:A+B, 
    \end{equation}
    and
    \begin{equation}\label{20may.02}
        \sum_{\gamma\not\in[T,2T]} \phi((\gamma-t)\log x) 
        = \sum_{\gamma \in I} \phi((\gamma-t)\log x) + O\bigg(\frac{\log T}{2T-t+1}+\frac{\log T}{t-T+1}\bigg)
        :=C+D,
    \end{equation}
    where $I:=\{ \gamma: 2T\leq \gamma\leq 2T+\frac{1}{\log x} \;\text{ or } \; T-\frac{1}{\log x}\leq \gamma\leq T \}$.
    Integrating over $t$ both sides of \eqref{20may.00}, by \eqref{20may.01}, \eqref{20may.02}, and the Cauchy-Schwarz inequality, we get 
    \begin{equation}\begin{split}\label{20may.03}
        \int_T^{2T}\bigg(\sum_{\gamma} \phi((\gamma-t)\log x)\bigg)^3dt
        = &\int_T^{2T}\bigg(\sum_{T\leq \gamma\leq 2T} \phi((\gamma-t)\log x)\bigg)^3 dt \\
        &+O(\mathcal E_{A^2C})+O(\mathcal E_{A^2D})+O(\mathcal E_{B^2C})+O(\mathcal E_{B^2D}).
    \end{split}\end{equation}
    with
    \begin{equation}\begin{split}\notag
        \mathcal E_{B^2D} 
        &\ll \int_T^{2T} (\log t)^2 \bigg(\frac{\log T}{2T-t+1}+\frac{\log T}{t-T+1}\bigg) dt
        \ll (\log T)^4,\\
        \mathcal E_{B^2C} 
        &\ll \int_T^{2T} (\log t)^2 \sum_{\gamma \in I} |\phi((\gamma-t)\log x)| dt
        \ll (\log T)^2\sum_{\gamma\in I}\int_{\mathbb R} |\phi(y)| \frac{dy}{\log x}
        \ll \frac{(\log T)^3}{\log x},\\
        \mathcal E_{A^2D} 
        &\ll  \int_T^{2T} \bigg(\sum_{|\gamma-t| \leq \frac{1}{\log x}} |\phi((\gamma-t)\log x)| \bigg)^2
        \bigg(\frac{\log T}{2T-t+1}+\frac{\log T}{t-T+1}\bigg)
        dt
        =: S_1+S_2+S_3,\\
        \mathcal E_{A^2C}
        &\ll \int_T^{2T} \bigg(\sum_{|\gamma-t| \leq \frac{1}{\log x}} |\phi((\gamma-t)\log x)| \bigg)^2
        \sum_{\gamma''\in I}|\phi((\gamma''-t)\log x)|
        dt
        =: R_1+R_2+R_3.
    \end{split}\end{equation}
    Here $S_1,S_2$ and $S_3$ are defined by splitting the range of integration in the intervals $(T+\sqrt{T},2T-\sqrt{T})$, $(2T-\sqrt{T},2T)$, $(T,T+\sqrt{T})$ respectively, and $R_1,R_2,R_3$ by splitting the integral in the ranges $(T+1,2T-1)$, $(2T-1,2T)$, and $(T,T+1)$ respectively. Let's bound all these contributions:
    \begin{equation}\begin{split}\notag
        S_1 &= \int_{T+\sqrt{T}}^{2T-\sqrt{T}} \bigg(\sum_{|\gamma-t| \leq \frac{1}{\log x}} |\phi((\gamma-t)\log x)| \bigg)^2
        \bigg(\frac{\log T}{2T-t+1}+\frac{\log T}{t-T+1}\bigg)
        dt \\
        &\ll \frac{\log T}{\sqrt{T}} \int_{T+\sqrt{T}}^{2T-\sqrt{T}} \bigg(\sum_{|\gamma-t| \leq \frac{1}{\log x}} |\phi((\gamma-t)\log x)| \bigg)^2
        dt 
        \ll \frac{\log T}{\sqrt{T}} 
        \mathop{\sum\sum}_{\substack{\frac{T}{2}\leq \gamma,\gamma' \leq \frac{5T}{2}: \\ |\gamma-\gamma'| \leq 1}} 1
        \ll \sqrt{T} (\log T)^3
    \end{split}\end{equation}
    since $\phi\in L^2(\mathbb R)$ and $\sum_{|\gamma-v|\leq 1} 1 \ll \log(|v|+2)$. Similarly,
    \begin{equation}\begin{split}\notag
        S_2 &= \int_{2T-\sqrt{T}}^{2T}
        \bigg(\sum_{|\gamma-t| \leq \frac{1}{\log x}} |\phi((\gamma-t)\log x)| \bigg)^2
        \bigg(\frac{\log T}{2T-t+1}+\frac{\log T}{t-T+1}\bigg)
        dt \\
        &\ll \log T \mathop{\sum\sum}\limits_{\substack{2T-\sqrt{T}-1\leq \gamma,\gamma' \leq 2T+1 \\ |\gamma-\gamma'|\leq 1 }} 1
        \ll (\log T)^2 \sum_{2T-\sqrt{T}-1\leq \gamma \leq 2T+1}1
        \ll \sqrt{T}(\log T)^3
    \end{split}\end{equation}
    and
        \begin{equation}\begin{split}\notag
        S_3 &= \int_{T}^{T+\sqrt{T}}
        \bigg(\sum_{|\gamma-t| \leq \frac{1}{\log x}} |\phi((\gamma-t)\log x)| \bigg)^2
        \bigg(\frac{\log T}{2T-t+1}+\frac{\log T}{t-T+1}\bigg)
        dt 
        \ll \sqrt{T}(\log T)^3.
    \end{split}\end{equation}
    Therefore,
    \begin{equation}\label{6may.10}
        \mathcal E_{A^2D}
        \ll \sqrt{T}(\log T)^3.
    \end{equation}
    Finally, we need to bound $\mathcal E_{A^2C}$, i.e. $R_1,R_2$ and $R_3$. For $t\in(T+1,2T-1)$ and $\gamma''\in (2T,2T+\frac{1}{\log x})$, we have $\phi((\gamma''-t)\log x)\ll \frac{1}{2T-t+1}$. A similar consideration for the case $\gamma''\in(T-\frac{1}{\log x},T)$ leads to
    \begin{equation}\begin{split}\notag
        R_1 = \int_{T+1}^{2T-1} \bigg(\sum_{|\gamma-t| \leq \frac{1}{\log x}} |\phi((\gamma-t)\log x)| \bigg)^2
        \sum_{\gamma''\in I}|\phi((\gamma''-t)\log x)|
        dt
        \ll \mathcal E_{A^2D}
        \ll \sqrt{T}(\log T)^3
    \end{split}\end{equation}
    by \eqref{6may.10}.
    We now turn to $R_2$:
    \begin{equation}\notag
        R_2 
        = \int_{2T-1}^{2T} \bigg(\sum_{|\gamma-t| \leq \frac{1}{\log x}} |\phi((\gamma-t)\log x)| \bigg)^2
        \sum_{2T\leq \gamma''\leq 2T+\frac{1}{\log x}}|\phi((\gamma''-t)\log x)|
        dt + O\bigg(\frac{(\log T)^3}{T^2}\bigg)
    \end{equation}
    since the contribution from the case $T-\frac{1}{\log x}\leq \gamma''\leq T$ is $\ll (\log T)^3/T^2$. Then the above is bounded by
    \begin{equation}\begin{split}\notag
        &\ll \mathop{\sum\sum\sum}\limits_{2T-2\leq \gamma,\gamma',\gamma''\leq 2T+1} \int_{2T-1}^{2T} |\phi((\gamma-t)\log x)||\phi((\gamma'-t)\log x)|
        |\phi((\gamma''-t)\log x)| dt
        \ll (\log T)^3
    \end{split}\end{equation}
    since $\phi\in L^3(\mathbb R)$. The same argument gives $R_3\ll (\log T)^3$ and therefore
    \begin{equation}\notag
        \mathcal E_{A^2C} \ll \sqrt{T}(\log T)^3.
    \end{equation}
    All these considerations show that the error terms in \eqref{20may.03} are $\ll \sqrt{T}(\log T)^3$.
    
    To extend the range of integration on the left hand-side of Equation \eqref{20may.03}, it suffices to bound the integral over $(2T,+\infty)$; the other range $(-\infty,T)$ is analogous. If $t>2T+1$, then $\sum_{T\leq \gamma\leq 2T} |\phi((\gamma-t)\log x)|\ll \log T/(t-2T+1)$ and therefore
    $$\int_{2T+1}^{+\infty} \bigg(\sum_{T\leq \gamma\leq 2T} h((\gamma-t)\log x)\bigg)^3 dt \ll (\log T)^3 .$$
    For $2T\leq t\leq 2T+1$, applying \eqref{20may.01} we get
    \begin{equation}\begin{split}\notag
        &\int_{2T}^{2T+1}  \bigg(\sum_{T\leq \gamma\leq 2T} \phi((\gamma-t)\log x)\bigg)^3 dt
        \ll \int_{2T}^{2T+1}  \bigg(\sum_{|\gamma-t| \leq \frac{1}{\log x}} |\phi((\gamma-t)\log x)| \bigg)^3 dt + (\log T)^3\\
        &\ll \mathop{\sum\sum\sum}_{2T-1\leq \gamma,\gamma',\gamma''\leq 2T+2} \int_{2T}^{2T+1}  |\phi((\gamma-t)\log x)||\phi((\gamma'-t)\log x)||\phi((\gamma''-t)\log x)| dt + (\log T)^3
    \end{split}\end{equation}
    that is $\ll(\log T)^3$ since $ \phi\in L^3(\mathbb R)$.
\end{proof}

\subsection{Approximation of the third moment in terms of sums of primes and zeros}

We borrow from \cite{LMQ} the following auxiliary functions:
\begin{equation}\begin{split}\label{DefAuxFunctions}
&f(u) := u\int_0^{\infty} \frac{\sinh(y(1-u))}{\cosh y} dy, \quad \hspace{.1cm} \text{for } u\in(0,2)\\
&g(u) := \int_0^{\infty} \frac{e^{-y}\cosh(uy)}{\cosh y} dy, \quad \hspace{.8cm} \text{for } u\in(-2,2)\\
&h(u) := \cos u\int_0^{\infty} \frac{y}{\cosh y} \frac{dy}{y^2+u^2}, \quad \text{for } u\in\mathbb R\setminus \{0\}.
\end{split}\end{equation}
The main properties of $f,g,h$ are summarized in \cite[Lemma 2.1.1]{LMQ}. In particular, we recall that $g$ and $h$ are even functions such that
\begin{equation}\label{PropertiesAuxiliaryFunctions}
g(u) = \frac{1-f(u)}{u} \quad \text{ and } \quad
\hat h(a) = \begin{cases} \pi g(2\pi a) & \text{if } |a|\leq \frac{1}{2\pi} \\ \frac{1}{2|a|} & \text{if } |a|> \frac{1}{2\pi} .\end{cases}
\end{equation}

\begin{proposition}\label{P&Z}
    Define
    \[ P(t) := \sum_{n \le x} \frac{\Lambda(n) \cos(t \log n)}{\sqrt{n} \log n} f \left( \frac{\log n}{\log x} \right)\] 
    and 
    \[ Z(t) := - \sum_{\gamma} h((\gamma - t) \log x) + \frac{\hat h(0) \log \frac{t}{2 \pi}}{2 \pi \log x}. \]
    Assuming RH, for any $2 \le x \le T$ we have
    \begin{equation}\notag
        M_3^{\Re}(T) = \frac{1}{T} \int_T^{2 T} \left( \Re \log \zeta(1/2+it) \right)^3 \, dt = \frac{1}{T} \int_T^{2 T} (P(t) + Z(t))^3 \, dt + O \left( \frac{\sqrt{x} \log \log T}{T \log^2 x} \right).
    \end{equation}
\end{proposition}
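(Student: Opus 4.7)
The approach is to represent $\Re\log\zeta(\tfrac12+it)$ via a Selberg--Tsang type explicit formula as $P(t) + Z(t) + E_x(t)$ for a controllable residual $E_x$, then expand the cube and bound the cross terms. The specific auxiliary functions $f,g,h$ introduced in \eqref{DefAuxFunctions} are tailored so that the Mellin-kernel contour argument underpinning the explicit formula produces exactly $P$ and $Z$ on the right-hand side, leaving behind an $E_x$ that can be made quantitatively small under RH.

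Concretely, I would invoke (or reprove, following Tsang and the second-moment analysis of \cite{LMQ}) the pointwise identity
\[
\Re \log \zeta(\tfrac12+it) = P(t) + Z(t) + E_x(t)
\]
valid for $t\in[T,2T]$ away from zeros, where $E_x$ collects the tail of the prime sum past $x^2$, the trivial-zero contribution, and smoothing errors from the contour shift. The same decomposition underlies the work of \cite{LMQ} for the second moment, and one obtains from their analysis a sharp mean-square bound on $E_x$; this is the essential analytic input I would import.

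The remainder is an elementary expansion,
\[
(\Re \log \zeta)^3 - (P+Z)^3 = 3(P+Z)^2 E_x + 3(P+Z) E_x^2 + E_x^3,
\]
followed by a term-by-term estimate after integrating and normalising. For the dominant cross term, Cauchy--Schwarz gives
\[
\frac{1}{T}\bigg|\int_T^{2T}(P+Z)^2 E_x\,dt\bigg| \le \frac{1}{T}\bigg(\int_T^{2T}(P+Z)^4\,dt\bigg)^{\!1/2}\bigg(\int_T^{2T}E_x^2\,dt\bigg)^{\!1/2}.
\]
Selberg's moment bounds control $\int (P+Z)^4 \ll T(\log\log T)^2$ (this factor being equivalent to the fourth moment of $\Re\log|\zeta|$ up to the same $E_x$), and combining with the LMQ mean-square estimate on $E_x$ yields the advertised bound $\sqrt{x}\log\log T/(T\log^2 x)$. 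The smaller cross terms $(P+Z)E_x^2$ and $E_x^3$ are handled analogously via H\"older's inequality together with higher-moment bounds on $P+Z$ and $E_x$, and fit comfortably inside the same error.

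The main obstacle is establishing a sufficiently sharp mean-square bound on $E_x$, which requires carefully tracking the smoothing errors in the Selberg formula under RH and exploiting the particular structure of $f,g,h$. Fortunately this is exactly the technical content of the corresponding step in \cite{LMQ}, so the present proposition reduces to feeding their bound through the cubic expansion.
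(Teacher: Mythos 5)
Your proposal is correct and follows essentially the same route as the paper: invoke the pointwise explicit formula $\Re\log\zeta(\tfrac12+it)=P(t)+Z(t)+O(\sqrt{x}/(t\log^2x))$ from \cite[Lemma 2.2.1]{LMQ}, expand the cube, and control the cross terms with Selberg's moment bounds. The only cosmetic difference is bookkeeping: the paper keeps the cross terms in powers of $\Re\log\zeta$ and uses the pointwise error bound directly with Selberg's second and first absolute moments, whereas you apply Cauchy--Schwarz with the fourth moment of $P+Z$ and a mean-square bound on the residual; both yield the same error $O(\sqrt{x}\log\log T/(T\log^2x))$.
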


\begin{proof}
    By \cite[Lemma 2.2.1]{LMQ}, assuming RH we have
    \[ \Re \log \zeta(1/2+it) = P(t) + Z(t) + O \left( \frac{\sqrt{x}}{t \log^2 x} \right). \]
    Hence, we obtain (bringing the $O$ factor to the left-hand side and expanding) that
    \begin{equation}\begin{split}\label{6may.1}
        M_3^{\Re}(T) &= \frac{1}{T} \int_T^{2 T} (P(t) + Z(t))^3 \, dt + O \left( \frac{\sqrt{x}}{T^2 \log^2 x} \int_T^{2 T} (\Re \log \zeta(1/2+it))^2 \, dt \right) \\
        &+ O \left( \frac{x}{T^3 \log^4 x} \int_T^{2 T} \left| \Re \log \zeta(1/2+it) \right| \, dt \right) + O \left( \frac{x^{3/2}}{T^3 \log^6 x} \right).
    \end{split}\end{equation}
    Next, by the Selberg Central Limit Theorem \cite{SelbergCLT,SelbergOandN}, we have
    \[ \int_{T}^{2 T} (\Re \log \zeta(1/2+it))^2 \, dt \ll T \log \log T, \]
    which also implies, using Cauchy-Schwarz inequality, that
    \[ \int_T^{2 T} \left| \Re \log \zeta(1/2+it) \right| \, dt \ll T (\log \log T)^{1/2}. \]
    The claim follows upon plugging these bounds into \eqref{6may.1} and noting that $2 \le x \le T$.
\end{proof}

Similarly, one can approximate the imaginary part of log-zeta. The auxiliary functions are:
\begin{equation}\begin{split}\label{DefAuxFunctionsIm}
&\mathfrak f(u) := \frac{\pi}{2}\cot\bigg(\frac{\pi}{2}u\bigg), \quad \hspace{2cm} \text{for } u\in(0,2)\\
&\mathfrak h(u) := \sin u\int_0^{\infty} \frac{y}{\sinh y} \frac{dy}{y^2+u^2}, \quad \text{for } u\in\mathbb R\setminus \{0\}.
\end{split}\end{equation}
We note that $\mathfrak f$ is an even function such that $\mathfrak f(u)=1+O(u^2)$ as $u\to0$, and $\mathfrak h$ is an odd function.

\begin{proposition}\label{P&ZIm}
    Define
    \[ \mathcal P(t) := -\sum_{n \le x} \frac{\Lambda(n) \sin(t \log n)}{\sqrt{n} \log n} \mathfrak f \left( \frac{\log n}{\log x} \right)\] 
    and 
    \[ \mathcal Z(t) :=  \sum_{\gamma} \mathfrak h((t-\gamma) \log x) . \]
    Assuming RH, for any $2 \le x \le T$ we have
    \begin{equation}\notag
        M_3^{\Im}(T) = \frac{1}{T} \int_T^{2 T} \left( \Im \log \zeta(1/2+it) \right)^3 \, dt = \frac{1}{T} \int_T^{2 T} (\mathcal P(t) + \mathcal Z(t))^3 \, dt + O \left( \frac{\sqrt{x} \log \log T}{T \log^2 x} \right).
    \end{equation}
\end{proposition}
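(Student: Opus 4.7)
The plan is to mirror the proof of Proposition \ref{P&Z} line by line, replacing the real part by the imaginary part and invoking the analogous pointwise approximation. The starting point should be an explicit-formula-type identity of the shape
\[ \Im \log \zeta(1/2+it) = \mathcal P(t) + \mathcal Z(t) + O\bigl( \tfrac{\sqrt{x}}{t \log^2 x} \bigr), \]
valid under RH for $2 \le x \le T$ and $t$ away from ordinates of zeros. This is the natural odd-function counterpart of \cite[Lemma 2.2.1]{LMQ}: the prime side uses $\sin(t \log n)$ instead of $\cos(t \log n)$ and the weight $\mathfrak f$ in place of $f$ (so that the singularity of $\cot$ at the origin is cancelled by the decay of $\Lambda(n)/\sqrt n \log n$ in a suitable way), while the zero side uses the odd kernel $\mathfrak h$, whose Fourier-pair relationship with $\mathfrak f$ is the exact odd analogue of the identities \eqref{PropertiesAuxiliaryFunctions}. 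Such a lemma is standard (it is the imaginary counterpart of what is developed in \cite{LMQ}), and I would simply cite it.

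Once this pointwise approximation is in hand, I would write
\[ \Im \log \zeta(1/2+it) = \mathcal P(t) + \mathcal Z(t) + E(t), \qquad E(t) \ll \tfrac{\sqrt{x}}{t \log^2 x}, \]
and expand
\[ (\Im \log \zeta(1/2+it))^3 = (\mathcal P(t) + \mathcal Z(t))^3 + 3(\mathcal P(t) + \mathcal Z(t))^2 E(t) + 3(\mathcal P(t) + \mathcal Z(t)) E(t)^2 + E(t)^3. \]
Replacing $\mathcal P + \mathcal Z$ by $\Im \log \zeta - E$ in the last three summands and expanding once more, one gets main term plus a finite number of cross terms of the form $E(t)^j (\Im \log \zeta(1/2+it))^{3-j}$ for $j = 1,2,3$. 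Averaging over $t \in [T,2T]$, it then suffices to bound
\[ \frac{1}{T} \int_T^{2T} |E(t)|^j |\Im \log \zeta(1/2+it)|^{3-j} \, dt, \qquad j = 1,2,3, \]
by $O(\sqrt{x} \log \log T/(T \log^2 x))$.

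For $j = 1$ I would pull out the sup bound $|E(t)| \ll \sqrt{x}/(T \log^2 x)$ and use Cauchy--Schwarz together with the second-moment bound from the Selberg central limit theorem \cite{SelbergCLT,SelbergOandN}
\[ \int_T^{2T} (\Im \log \zeta(1/2+it))^2 \, dt \ll T \log \log T, \]
which gives an average of $\Im \log \zeta$ of size $(\log \log T)^{1/2}$. This produces exactly the error $\sqrt{x}\log\log T/(T \log^2 x)$ claimed. For $j = 2$ one uses the same second-moment bound directly, and the resulting error is better by a factor $\sqrt{x}/(T\log^2 x)$, hence absorbed; the $j=3$ term is even smaller. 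This is entirely parallel to \eqref{6may.1} and the subsequent lines in the proof of Proposition \ref{P&Z}, and the assumption $2 \le x \le T$ ensures that the $O$-terms one picks up from $E^2, E^3$ are dominated by the $j=1$ error.

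The only genuinely new ingredient is the imaginary-part explicit formula yielding $\mathcal P$ and $\mathcal Z$; once this is granted, the argument is essentially formal, and I do not expect any real obstacle beyond checking that $\mathfrak f$ and $\mathfrak h$ satisfy the required integrability and boundedness properties (and in particular that $\mathfrak f(\log n/\log x)$ behaves well for the $n$ close to $x$ where the cotangent would otherwise blow up). This is where I would be most careful, but it is a standard calculation once the Mellin pair underlying the identity is identified.
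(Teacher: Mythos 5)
Your proposal is correct and follows essentially the same route as the paper: the pointwise approximation $\Im\log\zeta(1/2+it)=\mathcal P(t)+\mathcal Z(t)+O(\sqrt{x}/(t\log^2 x))$ is indeed a known result (the paper cites Goldston \cite{Goldston1}, Equations (2.9) and (2.12), rather than deriving an odd analogue of \cite{LMQ}), and the subsequent cube expansion together with the Selberg central limit theorem second-moment bound and Cauchy--Schwarz is exactly the argument of Proposition \ref{P&Z} that the paper reuses here.
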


\begin{proof}
    Assume RH and let $2\leq x\leq T$. Work of Goldston (see \cite{Goldston1}, Equation (2.9) and (2.12)) implies
    \[ \Im \log \zeta(1/2+it) 
    = \pi S(t)
    = \mathcal P(t) + \mathcal Z(t) + O \left( \frac{\sqrt{x}}{t \log^2 x} \right). \]
    The claim follows from the same argument as in the last proof, by an application of Selberg's central limit theorem.
\end{proof}

\subsection{Estimates for singular series averages}

We will later require estimates for averages of the singular series $\mathfrak{S}_n$, which we collect below. None of these estimates are particularly novel, and most of them are not stated in the strongest possible way.

For simplicity we will assume in the proofs throughout this section that $n = q^a$ is the power of an odd prime $q > 2$. The case $q = 2$ can be obtained with little modification.

\begin{lemma}\label{SigmaNCM}
    Assume RH. For any $0 < \varepsilon < \frac{1}{2}$, we have
    \begin{equation}
        S_n(y) := \sum_{h \le y} (y-h) \mathfrak{S}_n(h) = 
        \begin{cases}
        \frac{y^2}{2} - \frac{\log y}{2} y + A y + O \left( \frac{y^2}{q} \right) + O_\varepsilon \left( y^{1/2+\varepsilon} \right), &\mbox{ if } y < q, \\
        \frac{y^2}{2} - \frac{\Lambda(n)}{2} y + O \left( y  \left( \frac{q}{y} \right)^{1/2-\varepsilon} \right), &\mbox{ if } y \ge q,    
        \end{cases}
    \end{equation}
    where
    \[ A = \frac{1-\gamma-\log 2 \pi}{2}. \]
\end{lemma}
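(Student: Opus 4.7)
The plan is to reduce $S_n(y)$ to a sum involving only the classical Hardy--Littlewood singular series $\mathfrak{S}(h)$ and then invoke a standard asymptotic for partial sums of $\mathfrak{S}$. Assume $n = q^a$ with $q$ odd (the $q=2$ case being analogous, with $\eta$ replaced by $1$ and the nonvanishing condition becoming $h$ odd). The condition $(n,h)=1$ is equivalent to $q \nmid h$; and when $h$ is even and $q \nmid h$, the primes dividing $nh = q^a h$ are exactly $\{q\} \cup \{p > 2 : p \mid h\}$, so the defining Euler product of $\mathfrak{S}$ gives $\mathfrak{S}(nh) = \frac{q-1}{q-2}\mathfrak{S}(h)$. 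This yields the clean identity $\mathfrak{S}_n(h) = \eta\, \delta(q \nmid h)\, \mathfrak{S}(h)$ with $\eta := (q-1)/(q-2) = 1 + O(1/q)$.

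Next, I would appeal to the classical asymptotic, provable under RH via a contour shift applied to $\sum_h \mathfrak{S}(h) h^{-s}$ and standard residue bookkeeping:
\[
    M(y) := \sum_{h \le y}(y-h)\mathfrak{S}(h) = \frac{y^2}{2} - \frac{y \log y}{2} + A y + O_\varepsilon(y^{1/2+\varepsilon}),
\]
with $A = (1-\gamma-\log 2\pi)/2$ emerging from the residues at $s=1$ and $s=0$. For $y < q$, the condition $q \nmid h$ is automatic, so $S_n(y) = \eta M(y) = M(y) + O(y^2/q)$, which upon substituting the above asymptotic immediately yields the first case of the lemma.

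For $y \ge q$, I would use $\delta(q\nmid h) = 1 - \delta(q \mid h)$ to split $S_n(y) = \eta M(y) - \eta \sum_{m \le y/q}(y - qm)\mathfrak{S}(qm)$. A second application of the structural identity from the first paragraph (to the argument $qm$) gives $\mathfrak{S}(qm) = \eta \mathfrak{S}(m) - \delta(q\mid m)\mathfrak{S}(m)/(q-2)$. Plugging this in, using the scaling $y - qm = q(y/q - m)$ to recognise the leading part as $\eta q M(y/q)$, and bounding the residual $\delta(q \mid m)$ contribution trivially by $O(y^2/q^3)$, one arrives at $S_n(y) = \eta M(y) - \eta^2 q M(y/q) + O(y^2/q^3)$. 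Substituting the classical asymptotic at both $y$ and $y/q$, the $y^2$ coefficient simplifies via $\eta - \eta^2/q = 1 + O(1/q^3)$ to leading order $1/2$; the $y\log$ terms collapse to $-\eta^2 y\log q/2 = -\Lambda(n) y/2 + O(y\log q/q)$ (since $\Lambda(n) = \log q$); and the dominant error $O_\varepsilon(y^{1/2+\varepsilon}q^{1/2-\varepsilon}) = O_\varepsilon(y(q/y)^{1/2-\varepsilon})$ emerges from scaling the classical error of $M(y/q)$ by $q$.

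The main obstacle is the careful bookkeeping of the smaller error terms (notably $O(y\log q/q)$ from the $\eta^2$ expansion and $O(y^2/q^3)$ from the $\delta(q\mid m)$ tail) so that they are absorbed into the claimed $O_\varepsilon(y(q/y)^{1/2-\varepsilon})$ in the relevant range of $y$. In particular, one must verify that on the range where the lemma is applied these corrections do not exceed the stated error, and separately handle the parity-dependent modifications required for the prime $q=2$.
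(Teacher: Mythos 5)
Your first case ($y<q$) is exactly the paper's argument: there $(h,q)=1$ automatically, $\mathfrak{S}_n(h)=\frac{q-1}{q-2}\mathfrak{S}(h)$, and one quotes the classical Ces\`aro asymptotic for $\sum_{h\le y}(y-h)\mathfrak{S}(h)$. The problem is your second case. The residual errors you propose to ``absorb'' are genuinely larger than the claimed error term once $y$ is a moderate power of $q$, and the lemma is needed for all $y\ge q$ (indeed $f_n(y)$ built from $S_n$ is later integrated over $y$ up to scales like $NU/T$ with $q$ fixed). Concretely: (i) your trivial bound $O(y^2/q^3)$ on the $\delta(q\mid m)$ tail, and the fact that $\eta-\eta^2/q=1-\frac{1}{q(q-2)^2}$ rather than exactly $1$, each leave a term of size $y^2/q^3$, which exceeds $y(q/y)^{1/2-\varepsilon}$ as soon as $y\gg q^{7/3}$; (ii) the $y\log y$ pieces do not cancel — $-\eta\frac{y\log y}{2}+\eta^2\frac{y}{2}\log(y/q)$ has a $y\log y$ coefficient $\frac{q-1}{2(q-2)^2}=O(1/q)$, leaving a term $O(y\log y/q)$ that exceeds the claimed error once $y\gg q^{3}$. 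These are not bookkeeping issues but artifacts of truncating an exact cancellation: the correct $y^2$-coefficient is exactly $\tfrac12$ and the correct linear term exactly $-\tfrac{\Lambda(n)}{2}y$, which your one-step sieve cannot see. To repair your route you would have to iterate the identity $\mathfrak{S}(qm)=\eta\mathfrak{S}(m)-\delta(q\mid m)\mathfrak{S}(m)/(q-2)$ to all orders, obtaining $S_n(y)$ as an infinite alternating combination of $q^jM(y/q^j)$, and verify that the resulting geometric series for each coefficient sums exactly to $\tfrac12$, $-\tfrac{\Lambda(n)}{2}$, and $0$ respectively.

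The paper avoids this entirely by working with the Dirichlet series $\mathcal F_n(s)=\sum_h\mathfrak{S}_n(h)h^{-s}$, for which one has the closed factorization $\mathcal F_n(s)=\left(1-2^{-(s+1)}\right)\mathfrak S\,A_q(s)\,\zeta(s)\zeta(s+1)\mathcal G(s)$ with $A_q(s)=\left(1-\frac{1}{q-1}+\frac{1}{q^s-1}\right)^{-1}$. Perron's formula in Ces\`aro form and a contour shift to $\Re s=-\tfrac12+\varepsilon$ then give the two main terms as exact residues at $s=1$ and $s=0$ (the would-be double pole from $\zeta(s+1)/s$ at $s=0$ is killed by a simple zero of $A_q$, which is precisely where $-\tfrac{\Lambda(n)}{2}y$ comes from), and the error $O\bigl(y(q/y)^{1/2-\varepsilon}\bigr)$ comes from the bound $A_q(\sigma+it)\ll q^{-\sigma}$ on the shifted line. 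If you want to keep an elementary sieve argument, you must carry out the full iteration and the exact summations; as written, the proof does not establish the stated error term for $y\ge q^{7/3}$, which is a range the paper actually uses.
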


\begin{proof}
    For $q > y$, we have $(h,n) = 1$ for all $h \le y$ and hence $\mathfrak{S}_n(h) = \frac{q-1}{q-2} \mathfrak{S}(h)$. The claim then follows immediately from \cite[Proposition 2]{FG1}, which states that
    \[ \sum_{h \le y} (y-h) \mathfrak{S}(h) = \frac{y^2}{2} - \frac{\log y}{2} y + O(y^{1/2+\varepsilon}). \]

    So suppose now that $q \le y$.
    The claim is a consequence of fairly standard contour integration arguments, and as such, we will confine ourselves to a sketch of proof following the lines of \cite{GoldstonSuriajaya1,GoldstonSuriajaya2,Rodgers1}.
    
    Note that we may as well restrict to the case when $n = q$ is a prime, since all the quantities involved depend only on $q$.

    For $\Re s > 1$, let
    \begin{equation}
        \mathcal F_n(s) := \sum_{h \ge 1} \frac{\mathfrak{S}_n(h)}{h^s}
    \end{equation}
    be the Dirichlet series associated to $\mathfrak{S}_n$. Then we have \cite[(6.5)]{Rodgers1}
    \begin{equation}
        \mathcal F_n(s) =  \left( 1 - \frac{1}{2^{s+1}} \right)\mathfrak{S} A_q(s)  \zeta(s) \zeta(s+1) \mathcal{G}(s) 
    \end{equation}
    with
    \begin{equation}
        \mathcal G(s) := \prod_{p > 2} \left( 1 + \frac{2}{(p-2) p^{s+1}} - \frac{1}{(p-2) p^{2 s + 1}} \right)
    \end{equation}
    and
    \begin{equation}
        A_q(s) := \left( 1 - \frac{1}{q-1} + \frac{1}{q^s - 1} \right)^{-1}.
    \end{equation}
    This provides a meromorphic continuation of $\mathcal F_n$ to $\Re s > - \frac{1}{2}$. Note that since we are assuming the Riemann Hypothesis, one could continue meromorphically to $\Re s > - \frac{3}{4}$ (by pulling out $\zeta(2s+2)^{-1}$ essentially), but this is not necessary for our purposes and would cause small complications later.

    Perron's formula for the Ces\`aro mean gives
    \begin{equation}\label{17may.1}
        S_n(y) = \frac{1}{2 \pi i} \int_{(2)} \mathcal{F}_n(s) y^{s+1} \, \frac{ds}{s(s+1)} =: \frac{1}{2 \pi i} \int_{(2)} \tilde{\mathcal{F}}_n(s) \, ds.
    \end{equation}

    One verifies that the only poles of $\tilde{\mathcal{F}}_n$ in the region $\Re s > - \frac{1}{2}$ are simple poles located at $s = 1$ and $s = 0$. At $s = 0$, this is because the double pole coming from $\frac{\zeta(s+1)}{s}$ is compensated by a simple zero of the factor $A_q(s)$. Consequently it is somewhat different from the situation in \cite{GoldstonSuriajaya2} where  $A_q(s)$ is not present, resulting in a double pole at $s = 0$ that ultimately gives an additional term of order $y \log y$.

    Next, note that
    \begin{equation}
        \mathrm{Res}_{s = 1} \tilde{\mathcal{F}}_n(s) = \frac{y^2}{2}
    \end{equation}
    and
    \begin{equation}
        \mathrm{Res}_{s = 0} \tilde{\mathcal{F}}_n(s) = - \frac{\Lambda(n)}{2}y.
    \end{equation}

    Cutting off the integral in \eqref{17may.1} at height $T$ for some large parameter $T$, we obtain
    \begin{equation}\label{17may.2}
        S_{n}(y) = \frac{1}{2 \pi i} \int_{2 - i T}^{2 + i T} \tilde{\mathcal{F}}_n(s) \, ds + O \left( \frac{y^3}{T} \right)
    \end{equation}
    since $\mathcal F_n(2 + i t) \ll 1$ uniformly in $n$ and $t$. 
    
    Now we want to shift the contour to $\Re s = \sigma_\varepsilon := - \frac{1}{2} + \varepsilon$. Using the residue theorem, we deduce
    \begin{equation}\label{17may.3}
        S_{n}(y) = \frac{y^2}{2} - \frac{\Lambda(n)}{2} y + \frac{1}{2 \pi i} \left( \int_{2 - i T}^{\sigma_\varepsilon - i T} + \int_{\sigma_\varepsilon - i T}^{\sigma_\varepsilon + i T} + \int_{\sigma_\varepsilon + i T}^{2 + i T} +  \right)\tilde{\mathcal{F}}_n(s) \, ds + O \left( \frac{y^3}{T} \right).
    \end{equation}
    In order to bound these three integrals, recall (see e.g. \cite[(5.1.6)]{Titchmarsh1}) that under RH we have $\zeta(\sigma+i t) \ll (|t|+1)^{\mu(\sigma)+\varepsilon/4}$ with
    \begin{equation}
        \mu(\sigma) = 
        \begin{cases}
            \frac{1}{2} - \sigma, &\mbox{if } \sigma \le \frac{1}{2}, \\
            0, &\mbox{if } \sigma > \frac{1}{2}
        \end{cases}
    \end{equation}
    as long as (say) $|\sigma + i t - 1| \ge 1$. Moreover, one verifies that
    \begin{equation}
        A_q(\sigma + i t) \ll
        \begin{cases}
            1, &\mbox{if } \sigma \ge 0, \\
            q^{-\sigma}, &\mbox{if } \sigma < 0
        \end{cases}
    \end{equation}
    and that
    \[ \mathcal{G}(\sigma + i t) \asymp 1 \]
    uniformly for $\sigma \ge \sigma_\varepsilon$.

    For the horizontal parts, we can simply bound
    \begin{equation}
        \tilde{\mathcal{F}}_n(\sigma \pm iT) \ll \frac{y^3 \sqrt{q}}{T}
    \end{equation}
    uniformly for $\sigma_\varepsilon \le \sigma \le 2$.
    On the vertical parts, we need to be slightly more precise, namely we estimate
    \begin{equation}
        \tilde{\mathcal{F}}_n(\sigma_\varepsilon + i t) \ll_\varepsilon \frac{y^{1+\sigma_\varepsilon}}{(|t|+1)^{2}} (|t|+1)^{1-\varepsilon +2 \varepsilon/4}q^{-\sigma_\varepsilon} = y \left(\frac{q}{y}\right)^{1/2-\varepsilon} (|t|+1)^{-1-\varepsilon/2}.
    \end{equation}
    Plugging these bounds into \eqref{17may.3} and integrating, we arrive at
    \begin{equation}
        S_{n}(y) = \frac{y^2}{2} - \frac{\Lambda(n)}{2} y + O \left( y \left( \frac{q}{y} \right)^{1/2-\varepsilon} \right) + O \left( \frac{y^3 \sqrt{q}}{T} \right).
    \end{equation}
    Taking say $T = y^3 \sqrt{q}$ gives the claim.
\end{proof}

We will also make use of the following auxiliary functions. Roughly following notation of \cite{Chan1}, for $\alpha \ge 0$, set
\begin{equation}
    S_{\alpha,n}(y) := \sum_{h \le y} \mathfrak{S}_n(h) h^\alpha - \frac{y^{\alpha + 1}}{\alpha + 1}.
\end{equation}
Moreover, for  $\alpha > 1$ define
\begin{equation}
    T_{\alpha,n}(y) := \sum_{h > y} \frac{\mathfrak{S}_n(h)}{h^\alpha} - \frac{1}{(\alpha -1)y^{\alpha - 1}}.
\end{equation}

\begin{lemma}\label{LemmaAsS0}
    Uniformly in $y$ and $n$, we have
    \begin{equation}\label{EqAsS0}
        S_{0,n}(y) = - \frac{\min \{ \Lambda(n), \log y \} }{2} + O((\log y)^{2/3}).
    \end{equation}
    In particular, for (fixed) $\alpha > 0$ we have
    \begin{equation}\begin{split}
        S_{\alpha,n}(y) = O_\alpha\left(y^{\alpha} (\log y)^{2/3}\right)
    \end{split}\end{equation}
    and for (fixed) $\alpha > 1$
    \begin{equation}\begin{split}
        T_{\alpha,n}(y) = O_\alpha\left(\frac{(\log y)^{2/3}}{y^\alpha}\right).
    \end{split}\end{equation}
    
\end{lemma}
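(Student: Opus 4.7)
The plan is to establish the $S_{0,n}$ asymptotic \eqref{EqAsS0} first, and then deduce the bounds on $S_{\alpha,n}$ and $T_{\alpha,n}$ by Abel summation. Throughout, assume $n = q^a$ for $q$ an odd prime; the prime $q = 2$ is analogous with minor modifications.

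For the first identity, split the analysis at $y = q$ (so that $\log q = \Lambda(n)$). When $y < q$, every integer $h \le y$ is automatically coprime to $n$, so $\mathfrak{S}_n(h) = \frac{q-1}{q-2}\mathfrak{S}(h)$. Applying the classical Friedlander-Goldston asymptotic
\[
\sum_{h\le y}\mathfrak{S}(h) = y - \tfrac{1}{2}\log y + O\big((\log y)^{2/3}\big)
\]
and using $\frac{q-1}{q-2} = 1 + O(1/q)$ together with $1/q < 1/y$ yields \eqref{EqAsS0} in this regime, since $\min\{\Lambda(n),\log y\} = \log y$. When $y \ge q$, the approach is to apply Perron's formula directly to $A_n(y) := \sum_{h\le y}\mathfrak{S}_n(h)$ using the Dirichlet series $\mathcal{F}_n(s)$ computed in the proof of Lemma \ref{SigmaNCM}, and to shift the contour from $\Re s = 2$ past $\Re s = 0$ to some $\Re s = \sigma_\varepsilon$. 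The integrand $\mathcal{F}_n(s)\frac{y^s}{s}$ has only two poles in the strip $-\tfrac{1}{2}+\varepsilon < \Re s \le 2$: a simple pole at $s = 1$ contributing residue $y$, and a simple pole at $s = 0$. The latter is the delicate one: since $A_q(s) = s\Lambda(n) + O(s^2)$ has a simple zero cancelling the pole of $\zeta(s+1)$, the product $A_q(s)\zeta(s+1)$ has finite value $\Lambda(n)$ at $s=0$. Combined with $\zeta(0) = -\tfrac{1}{2}$ and the identity $\mathfrak{S}\,\mathcal{G}(0) = 2$ (directly verifiable from the Euler product factorisations of $\mathfrak{S}$ and $\mathcal{G}$), this residue equals $-\Lambda(n)/2$, matching the claim since $\min\{\Lambda(n),\log y\} = \Lambda(n)$ in this range. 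The remaining contour integrals are estimated using the convexity bounds on $\zeta$ and the growth of $A_q$, $\mathcal{G}$ already employed in Lemma \ref{SigmaNCM}, with the truncation height chosen so that the horizontal and vertical contributions balance at $O((\log y)^{2/3})$.

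To deduce the $S_{\alpha,n}$ estimate, substitute $A_n(t) = t + S_{0,n}(t)$ into Abel summation:
\[
\sum_{h\le y}h^\alpha \mathfrak{S}_n(h) = y^\alpha A_n(y) - \alpha\int_1^y t^{\alpha-1}A_n(t)\,dt = \frac{y^{\alpha+1}}{\alpha+1} + y^\alpha S_{0,n}(y) - \alpha\int_1^y t^{\alpha-1}S_{0,n}(t)\,dt + O_\alpha(1).
\]
The error part of $S_{0,n}$ trivially contributes $O_\alpha(y^\alpha(\log y)^{2/3})$. For the main term, split the integral at $t = q$ (when $q \le y$): the piecewise structure of $\mu(t) := \min\{\Lambda(n),\log t\}/2$ conspires, via explicit evaluation of $\int t^{\alpha-1}\log t\,dt$ against the boundary contribution $-y^\alpha \mu(y)$, to cancel the leading $y^\alpha \log y$ terms and leave a remainder of size $O_\alpha(y^\alpha)$, which is absorbed in $O_\alpha(y^\alpha(\log y)^{2/3})$. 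The same manipulation applied to the tail $\sum_{h > y}\mathfrak{S}_n(h)h^{-\alpha}$ (using the convergence of the series for $\alpha > 1$) gives the $T_{\alpha,n}$ bound.

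The main obstacle is the Perron argument in the regime $y \ge q$: one must carefully verify the combinatorics of the residue at $s = 0$ so that the constant comes out to exactly $-\Lambda(n)/2$, and track the dependence on $q$ through the horizontal and vertical pieces so that the final error is uniform in $n$ and of the claimed size $(\log y)^{2/3}$. The identity $\mathfrak{S}\,\mathcal{G}(0) = 2$ is the key cancellation that makes the residue structure clean and independent of $q$.
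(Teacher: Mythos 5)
Your handling of the range $y<q$ and your deduction of the $S_{\alpha,n}$ and $T_{\alpha,n}$ bounds by partial summation are fine — in particular you correctly identify that the boundary term $-y^{\alpha}\min\{\Lambda(n),\log y\}/2$ must cancel against the integral of $t^{\alpha-1}\min\{\Lambda(n),\log t\}$, without which the bound would only be $O(y^{\alpha}\log y)$. The residue computation at $s=0$ is also correct, and the identity $\mathfrak{S}\,\mathcal{G}(0)=2$ does hold. The problem is the error analysis in the regime $y\ge q$. For the \emph{sharp} sum $\sum_{h\le y}\mathfrak{S}_n(h)$ the Perron integrand is $\mathcal{F}_n(s)y^s/s$, with only one power of $s$ in the denominator. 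On any vertical line $\Re s=\sigma<\tfrac12$ the factor $\zeta(\sigma+it)$ is genuinely of size $(|t|+1)^{1/2-\sigma}$ (by the functional equation this is a lower bound on average, not just an upper bound), so the vertical integral up to height $T$ is of order $y^{\sigma}q^{-\sigma}T^{1/2-\sigma}$, while the truncated-Perron error forces $T\gtrsim y$. At $\sigma=-\tfrac12+\varepsilon$ this contributes about $\sqrt{qy}$, and even shifting only infinitesimally past $s=0$ one gets about $\sqrt{y}$. There is no choice of truncation height for which ``the horizontal and vertical contributions balance at $O((\log y)^{2/3})$''; the claim is unsubstantiated and false as stated. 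This is exactly why Lemma \ref{SigmaNCM} is formulated for the Ces\`aro mean $\sum_{h\le y}(y-h)\mathfrak{S}_n(h)$, where the extra factor $1/(s+1)$ makes the vertical integral converge — and even there the resulting error is a power of $y$, not logarithmic. (Your route would also smuggle in RH, which the lemma does not assume.)

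The paper's proof of \eqref{EqAsS0} is instead elementary and uniform in both regimes at once: writing $H_q(h)=\prod_{p\mid h,\,(p,2q)=1}\tfrac{p-1}{p-2}$ and removing the condition $(h,q)=1$ by subtracting the multiples of $q$, one has
\begin{equation*}
S_{0,n}(2y)=\frac{q-1}{q-2}\,\mathfrak{S}\Big(\sum_{h\le y}H_q(h)-\sum_{h'\le y/q}H_q(h')\Big)-2y,
\end{equation*}
and then one invokes the asymptotic \cite[(2.2)]{FG1} for $\sum_{h\le y}H_q(h)$, which already carries the $O((\log y)^{2/3})$ error term; that error term is the delicate content of \cite{FG1} and is not recoverable by a routine contour shift. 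The term $-\tfrac12\min\{\Lambda(n),\log y\}$ then falls out of the difference of the two logarithmic contributions ($-\tfrac12\log y+\tfrac12\log(y/q)=-\tfrac12\log q$ when $y\ge q$, and just $-\tfrac12\log y$ when $y<q$, the second sum being empty). To repair your argument you should replace the Perron step for $y\ge q$ by this reduction to the Friedlander--Goldston estimate, or otherwise import their result as a black box.
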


\begin{proof}
    Equation \eqref{EqAsS0} will follow fairly quickly from \cite[Proposition 1]{FG1}. The other two claims are immediate consequences by means of summation by parts.

    Following the notation of \cite{FG1}, set
    \[ H_q(h) := \prod_{\substack{p \, | \, h \\ (p,2q) = 1}} \frac{p-1}{p-2}. \]
    We then have
    \begin{align*}
        S_{0,n}(2y) &= \frac{q-1}{q-2} \sum_{\substack{h \le 2y \\ (h,q) = 1}} \mathfrak{S}(h) - 2y = \frac{q-1}{q-2} \mathfrak{S} \left( \sum_{h \le y} H_q(h) - \sum_{h' \le y/q} H_q(h') \right) - 2y.
    \end{align*} 
    Note that the second sum is empty if $q > y$.
    The statement then follows upon employing \cite[(2.2)]{FG1} after a short calculation.
\end{proof}

\begin{lemma}\label{LemmaAsForf_n}
    Let
    \begin{equation}
        f_n(y) := y T_{2,n}(y) + \frac{S_{2,n}(y)}{y^3}.
    \end{equation}
    We have
    \begin{equation}
        f_n(y) = \begin{cases}
            -\frac{1}{2 y} + O \left( y^{-5/4} \right) + O \left( q^{-1} \right), &\mbox{ if } y < q, \\[4pt]
            O \left( \frac{q^{1/4}}{y^{5/4}} \right), &\mbox{ if } y \ge q.
        \end{cases}
    \end{equation}
\end{lemma}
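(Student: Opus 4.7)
The plan is to reduce the statement to Lemma~\ref{SigmaNCM} by expressing $f_n(y)$ as an integral transform of $S_n$. Writing $N(y) := \sum_{h\le y}\mathfrak{S}_n(h)$, so that $S_n(t) = \int_0^t N(s)\,ds$, two rounds of integration by parts give
\begin{equation}\notag
    \sum_{h\le y} h^2\mathfrak{S}_n(h) = y^2 N(y) - 2y S_n(y) + 2\int_0^y S_n(t)\,dt,
\end{equation}
and a Stieltjes integration by parts (using $S_n(t) = O(t^2)$ to kill the boundary at infinity) yields
\begin{equation}\notag
    \sum_{h> y}\frac{\mathfrak{S}_n(h)}{h^2} = -\frac{N(y)}{y^2} - \frac{2 S_n(y)}{y^3} + 6\int_y^\infty \frac{S_n(t)}{t^4}\,dt.
\end{equation}
Combining these in $f_n = yT_{2,n} + S_{2,n}/y^3$, the $N(y)/y$ terms cancel and one arrives at the key identity
\begin{equation}\notag
    f_n(y) = -\frac{4 S_n(y)}{y^2} + 6y\int_y^\infty \frac{S_n(t)}{t^4}\,dt + \frac{2}{y^3}\int_0^y S_n(t)\,dt - \frac{4}{3}.
\end{equation}
Substituting the ``unperturbed'' main term $S_n(t) \approx t^2/2$ gives $-2 + 3 + 1/3 - 4/3 = 0$, so only the perturbation $E(t) := S_n(t) - t^2/2$, for which Lemma~\ref{SigmaNCM} provides precise estimates, contributes.

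For $y < q$, I split the integral $\int_y^\infty$ at $t = q$: on $[y,q)$, I insert the expansion $E(t) = -\tfrac12 t\log t + At + O(t^2/q) + O_\varepsilon(t^{1/2+\varepsilon})$, and on $[q,\infty)$, I insert $E(t) = -\tfrac12 \Lambda(n)\,t + O(t(q/t)^{1/2-\varepsilon})$. Integrating the elementary antiderivatives of $t^{-3}\log t$, $t^{-3}$ and $t^{-4}$ against these expansions, the identity $\Lambda(n) = \log q$ ensures that the boundary contribution $\frac{\log q}{4q^2}$ from $[y,q)$ cancels the main contribution $-\frac{\Lambda(n)}{4q^2}$ from $[q,\infty)$, so no $\log q/y$ term survives. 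Collecting everything, the $\frac{\log y}{y}$ and $\frac{A}{y}$ coefficients cancel across the three pieces (with respective coefficients $2 - 3/2 - 1/2 = 0$ and $-4+3+1 = 0$), the remaining pure $1/y$ contributions aggregate to $-\tfrac{3}{4y}+\tfrac{1}{4y}= -\tfrac{1}{2y}$, and the residual error is $O(q^{-1}) + O_\varepsilon(y^{-3/2+\varepsilon})$, which is absorbed into the claimed $O(y^{-5/4}) + O(q^{-1})$ upon choosing $\varepsilon$ small.

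For $y \ge q$, only the large-$t$ expansion of $E$ enters the integral over $(y,\infty)$, and splitting $\int_0^y E(t)\,dt$ at $t = q$ produces an $O(q^2/y^3)$ contribution from $[0,q]$. The three main-term contributions $\frac{2\Lambda(n)}{y}$, $-\frac{3\Lambda(n)}{2y}$, $-\frac{\Lambda(n)}{2y}$ from the three summands of $f_n$ sum to zero, and the combined error is $O(q^{1/2-\varepsilon}y^{-3/2+\varepsilon}) + O(q^2/y^3)$, each term of which is bounded by $q^{1/4}/y^{5/4}$ as soon as $y \ge q$.

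The main bookkeeping hurdle is keeping track of the several ``secondary'' main terms --- the $A/y$, the $\log y/y$, and the $\log q/y$ produced by the three distinct integrals --- and verifying that they cancel across the pieces. The algebraic mechanism that makes this work is the identity $\Lambda(n) = \log q$, which precisely eliminates the boundary contribution at $t = q$ when $y < q$; once this is noted, the rest is routine polynomial integration.
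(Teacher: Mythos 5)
Your proof is correct and follows essentially the same route as the paper's: integrate by parts to reduce $f_n$ to the Ces\`aro mean $\sum_{h\le t}(t-h)\mathfrak{S}_n(h)$ and then insert Lemma \ref{SigmaNCM} (the paper passes through $S_{0,n}$ and Lemma \ref{LemmaAsS0} first, but arrives at the same identity for $f_n$ in terms of the Ces\`aro mean). I verified your key identity and the coefficient cancellations $2-\tfrac32-\tfrac12=0$, $-4+3+1=0$, $-\tfrac34+\tfrac14=-\tfrac12$. One small bookkeeping point in the $y\ge q$ case: the contribution of $[0,q]$ to $\frac{2}{y^3}\int_0^y E(t)\,dt$ is $O(q^2\log q/y^3)$ rather than $O(q^2/y^3)$, and the extra $\log q$ would violate the claimed bound $O(q^{1/4}/y^{5/4})$ near $y=q$; you need the same $\Lambda(n)=\log q$ cancellation you invoked for $y<q$, this time between the $-\frac{q^2\log q}{2y^3}$ coming from $\int_0^q$ and the $+\frac{\Lambda(n)q^2}{2y^3}$ coming from the lower boundary of $\int_q^y E(t)\,dt$, after which the residual is indeed $O(q^2/y^3)$ and your argument closes.
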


\begin{proof}
    Integration by parts implies
    \[ \frac{S_{2,n}(y)}{y^3} = \frac{S_{0,n}(y)}{y} - \frac{2}{y^3} \int_1^y S_{0,n}(t) t \, dt + O(y^{-3}) \]
    and similarly
    \[ y T_{2,n}(y) = - \frac{S_{0,n}(y)}{y} + 2 y \int_y^\infty \frac{S_{0,n}(t)}{t^3} \, dt. \]
   Inspired by Lemma \ref{LemmaAsS0}, we define $\varepsilon_n$ via the equality
    \[ S_{0,n}(y) = - \frac{1}{2} \min \{ \Lambda(n), \log y \} + \varepsilon_n(y). \]
    Plugging this expression in and integrating, we deduce that
    \begin{equation*}
        f_n(y) = 2 y \int_y^\infty \frac{\varepsilon_n(t)}{t^3} \, dt - \frac{2}{y^3} \int_1^y \varepsilon_n(t) t \, dt + \begin{cases}
            -\frac{1}{2 y} + O \left( \frac{y}{q^2} \right) + O \left( \frac{1}{y^3} \right), &\mbox{if } y < q, \\
            O \left( \frac{q^2}{y^3} \right), &\mbox{if } y \ge q.
        \end{cases}
    \end{equation*}
    The claim will thus follow once we can show that
    \begin{equation}\label{26sep.1}
        2 y \int_y^\infty \frac{\varepsilon_n(t)}{t^3} \, dt - \frac{2}{y^3} \int_1^y \varepsilon_n(t) t \, dt = \begin{cases}
            O \left( \frac{1}{q} \right) + O \left( \frac{1}{y^{5/4}} \right), &\mbox{if } y < q, \\
            O \left( \frac{q^{1/4}}{y^{5/4}} \right), &\mbox{if } y \ge q.
            \end{cases}
    \end{equation}
    In order to achieve this we begin by noting that, by application of Lemma \ref{SigmaNCM}, we have
    \begin{equation}\begin{split}\label{26sep.2}
        \mathcal{E}_n(y) &:= \int_1^y \varepsilon_n(t) \, dt
        = \int_1^y \left( \sum_{h \le t} \mathfrak{S}_n(h) - t + \frac{1}{2} \min \{ \Lambda(n), \log t \} \right) \, dt \\
        &= \sum_{h \le y} (y-h) \mathfrak{S}_n(h) - \frac{y^2}{2} + \frac{1}{2} \left( y \min \{ \Lambda(n), \log y \} - \min \{q,y \} \right) + O(1) \\
        &= \begin{cases}
            \left( A - \frac{1}{2} \right) y + O \left( \frac{y^2}{q} \right) + O(y^{3/4}), &\mbox{if } y < q, \\
            O \left( y^{3/4} q^{1/4} \right), &\mbox{if } y \ge q.
        \end{cases}
    \end{split}\end{equation}
    Another integration by part gives
    \begin{equation}
        2 y \int_y^\infty \frac{\varepsilon_n(t)}{t^3} \, dt - \frac{2}{y^3} \int_1^y \varepsilon_n(t) t \, dt
        = \frac{-4 \mathcal{E}_n(y)}{y^2} + 6 y \int_y^\infty \frac{\mathcal{E}_n(t)}{t^4} \, dt + \frac{2}{y^3} \int_1^y \mathcal{E}_n(t) \, dt.
    \end{equation}
    Inserting \eqref{26sep.2} implies \eqref{26sep.1} after a routine calculation.
\end{proof}

\section{The contribution from three prime powers}\label{SecMainEstimates}

We begin by estimating the contribution to $M_3^{\Re}(T)$ in Proposition \ref{P&Z} that comes from integrating $P(t)^3$. This case turns out to be the most straight-forward, and gives rise to the prime constant $c_P$ in \eqref{TheoremRP}. We have not attempted to maximise the range of $x$, since the subsequent sections will pose more strict requirements.

\begin{proposition}
    For any $2 \le x \le T^{1/3}$, we have
    \begin{equation}\notag
        \frac{1}{T} \int_T^{2 T} P(t)^3 \, dt = c_P + O \left( \frac{1}{\log x} \right).
    \end{equation}
\end{proposition}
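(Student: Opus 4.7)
The plan is to expand $P(t)^3$ as an explicit triple sum over prime powers, isolate the diagonal (vanishing-phase) terms that produce $c_P$, and bound the off-diagonal remainder via Lemma~\ref{CosLogInt}. Setting $c_n := \frac{\Lambda(n)}{\sqrt{n}\log n} f\bigl(\frac{\log n}{\log x}\bigr)$ and applying $\cos a \cos b \cos c = \frac{1}{4}\sum_{\epsilon_2, \epsilon_3 \in \{\pm 1\}}\cos((a + \epsilon_2 b + \epsilon_3 c)t)$, one arrives at
\[ \frac{1}{T}\int_T^{2T} P(t)^3 \, dt = \frac{1}{4} \sum_{\epsilon_2, \epsilon_3} \sum_{n_1, n_2, n_3 \le x} c_{n_1} c_{n_2} c_{n_3} \cdot \frac{1}{T} \int_T^{2T} \cos(\rho_\epsilon t)\,dt, \]
with $\rho_\epsilon := \log n_1 + \epsilon_2 \log n_2 + \epsilon_3 \log n_3$. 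Lemma~\ref{CosLogInt} at $k=0$ then contributes a main term from $\rho_\epsilon = 0$ and a per-term error $O(1/(|\rho_\epsilon|T))$ otherwise.

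For the diagonal, $\mathbb{Q}$-linear independence of $\{\log p\}$ forces $n_1, n_2, n_3$ to share a common prime base, say $n_i = p^{a_i}$. The pattern $(+,+,+)$ admits no solution with $a_i \ge 1$, while the three mixed patterns each correspond to one $a_i$ being the sum of the other two and, by relabeling, contribute equally. Setting $m = a_1 + a_2$ and $(k,\ell) = (a_1, a_2)$, the diagonal collapses to
\[ \frac{3}{4} \sum_{\substack{p,\, m \ge 2 \\ p^m \le x}} \frac{1}{m p^m} \sum_{\substack{k+\ell = m \\ k,\ell \ge 1}} \frac{1}{k\ell}\, f\bigl(\tfrac{k\log p}{\log x}\bigr) f\bigl(\tfrac{\ell\log p}{\log x}\bigr) f\bigl(\tfrac{m\log p}{\log x}\bigr). \]
Using $\lim_{u \to 0^+}f(u) = 1$ and $f(u) = 1 + O(u)$ on $[0,1]$ (from $f(u) = 1 - u g(u)$ with $g$ bounded, cf.\ the properties recalled around \eqref{DefAuxFunctions}), the deviation of the product from $1$ is $O(m\log p/\log x)$ per term, and summing against the absolutely convergent series $\sum_{p,\,m\ge 2}(\log m)(\log p)/(m p^m)$ yields an error of $O(1/\log x)$. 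The tail $p^m > x$ is dominated by $m = 2$ and gives $O(x^{-1/2})$, so the diagonal equals $c_P + O(1/\log x)$.

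For the off-diagonal, I would split by sign pattern. In the $(+,+,+)$ case $|\rho_\epsilon| \ge \log 8$, and the total contribution is bounded by $T^{-1}(\sum_{n \le x}|c_n|)^3 \ll x^{3/2}/(T(\log x)^3)$ using $\sum_{p \le x}p^{-1/2} \ll \sqrt{x}/\log x$. In the three mixed-sign cases $\rho_\epsilon = \log(n_1 n_2/n_3)$ up to relabeling; when $n_3 \notin [n_1 n_2/2, 2 n_1 n_2]$ the same bound applies, and when $n_3$ is close to $n_1 n_2$ I would exploit $|\log(n_1 n_2/n_3)| \gg |n_3 - n_1 n_2|/(n_1 n_2)$ together with the elementary fact that at each integer distance $k$ there are at most two prime powers to obtain $\sum_{n_3}|c_{n_3}|/|\log(n_1 n_2/n_3)| \ll \sqrt{n_1 n_2}\,\log x$. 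Summing over $(n_1, n_2)$ with $n_1 n_2 \le 2x$ via a divisor-function count then yields a bound of $O(x(\log x)(\log\log x)/T)$.

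The main obstacle is this near-diagonal regime $n_1 n_2 \approx n_3$, where $|\rho_\epsilon|$ can be arbitrarily small; the hypothesis $x \le T^{1/3}$ is exactly what makes both $x^{3/2}/T$ and $x(\log x)(\log\log x)/T$ of size $o(1/\log x)$, allowing the off-diagonal error to be absorbed into the claimed $O(1/\log x)$ tolerance. Combining the diagonal and off-diagonal estimates completes the proof.
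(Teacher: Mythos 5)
Your proposal is correct and follows essentially the same route as the paper: expand the cube, extract the diagonal $n_1 n_2 = n_3$ (and permutations) to produce $c_P$ with an $O(1/\log x)$ loss from $f(u)=1+O(u)$ and the tail, and control the off-diagonal via the oscillation of $\int_T^{2T} e^{i\rho t}\,dt$ together with $x \le T^{1/3}$. The only difference is cosmetic: the paper disposes of all off-diagonal terms at once with the single crude bound $|\log(ab/c)|^{-1} \ll x$ (valid since $\min\{ab,c\}\le x$ whenever $ab \ne c$ with $c \le x$), whereas you treat the near-diagonal range by a finer summation over the distance $|n_3 - n_1 n_2|$; both give errors that are $o(1/\log x)$ in the stated range.
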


\begin{proof}
    Note that
    \begin{equation}\begin{split} \label{2may.1}
        \frac{1}{T}\int_{T}^{2T} P(t)^3 dt
        &= \frac{1}{8}\sum_{a,b,c\leq x} \frac{\Lambda(a)\Lambda(b)\Lambda(c)}{\sqrt{abc}\log a\log b\log c} f\bigg(\frac{\log a}{\log x}\bigg)f\bigg(\frac{\log b}{\log x}\bigg)f\bigg(\frac{\log c}{\log x}\bigg)\\
        & \hspace{3.2cm} \times \frac{1}{T}\int_T^{2T} (a^{it}+a^{-it})(b^{it}+b^{-it})(c^{it}+c^{-it})dt.
\end{split}\end{equation}
From the fact that the length of the respective sums is rather short, we might expect the only contribution to come from the ``diagonal'' terms where $ab = c$ or a permutation thereof, and that is indeed what we will see now. 

We will deal as an example with the case where the integrand is $\left(\frac{ab}{c}\right)^{it}$. There are $5$ further cases where the integrand is of essentially the same shape, and it is straight-forward to see that they give the same contribution. We will then explain how to bound the part with integrand $(abc)^{it}$ and remark that $(abc)^{-it}$ can be dealt with in the same way (and is in fact simply the conjugate).

For $a, b, c \le x$ with $ab \ne c$, we can use the rather crude bound
\[ \int_{T}^{2 T} \left( \frac{ab}{c} \right)^{it} dt \ll \frac{1}{\left|\log \frac{ab}{c}\right|} \ll x \]
to deduce that
\begin{align*}
    &\frac{1}{8}\sum_{a,b,c\leq x} \frac{\Lambda(a)\Lambda(b)\Lambda(c)}{\sqrt{abc}\log a\log b\log c} f\bigg(\frac{\log a}{\log x}\bigg)f\bigg(\frac{\log b}{\log x}\bigg)f\bigg(\frac{\log c}{\log x}\bigg)\frac{1}{T}\int_T^{2T} \left( \frac{ab}{c} \right)^{it} dt \\
    = &\frac{1}{8} \sum_{ab \le x} \frac{\Lambda(a) \Lambda(b) \Lambda(ab)}{a b \log a \log b \log (ab)}  f\bigg(\frac{\log a}{\log x}\bigg)f\bigg(\frac{\log b}{\log x}\bigg)f\bigg(\frac{\log a b}{\log x}\bigg) + O \left( \frac{x}{T} \sum_{a, b, c \le x} \frac{\Lambda(a)\Lambda(b)\Lambda(c)}{\sqrt{abc}\log a\log b\log c} \right) \\
    = &\frac{1}{8} \sum_{ab \le x} \frac{\Lambda(a) \Lambda(b) \Lambda(ab)}{a b \log a \log b \log (ab)}  f\bigg(\frac{\log a}{\log x}\bigg)f\bigg(\frac{\log b}{\log x}\bigg)f\bigg(\frac{\log a b}{\log x}\bigg) + O \left( \frac{x^{5/2}}{T} \right).
\end{align*}
Here, we used the fact that $f$ is bounded, and in the last step we simply bounded $\Lambda(n) \le \log n$. In the main term, note that the appearance of the factor $\Lambda(ab)$ means that we can assume $a = p^\alpha$ and $b = p^\beta$ to be powers of the same prime $p$. Thus, we can write the main term as
\begin{align*} &= \frac{1}{8} \sum_{p, \alpha, \beta : p^{\alpha+\beta} \le x} \frac{1}{\alpha \beta (\alpha+\beta) p^{\alpha+\beta}} f\bigg(\frac{\alpha \log p}{\log x}\bigg)f\bigg(\frac{\beta \log p}{\log x}\bigg)f\bigg(\frac{(\alpha + \beta) \log p}{\log x}\bigg) \\
&= \frac{1}{8} \sum_{p, \gamma : p^\gamma \le x} \frac{1}{\gamma p^\gamma} f\bigg(\frac{\gamma \log p}{\log x} \bigg) \sum_{\alpha + \beta = \gamma} \frac{1}{\alpha \beta} f\bigg(\frac{\alpha \log p}{\log x}\bigg)f\bigg(\frac{\beta \log p}{\log x}\bigg).
\end{align*}
Simply writing $f(x) = 1 + O(x)$, one verifies that this equates to
\[ = \frac{1}{8} \sum_{p, \gamma \ge 2} \frac{1}{\gamma p^\gamma} \sum_{\alpha + \beta = \gamma} \frac{1}{\alpha \beta} + O \left( \frac{1}{\log x} \right). \]
Summarising the estimates so far, we have seen that the six contributions to \eqref{2may.1} coming from the six terms involving the integrand $\left( \frac{ab}{c} \right)^{i t}$ together with its permutations and conjugates combine to
\[ = c_P + O \left( \frac{1}{\log x} \right). \]
It remains to bound the contribution coming from $(abc)^{it}$ (and its conjugate), which is even more direct. We can simply note that
\[ \int_T^{2T} (abc)^{i t} \, dt \ll 1 \]
for any prime powers $a, b$ and $c$, so that
\begin{align*}
    &\sum_{a,b,c\leq x} \frac{\Lambda(a)\Lambda(b)\Lambda(c)}{\sqrt{abc}\log a\log b\log c} f\bigg(\frac{\log a}{\log x}\bigg)f\bigg(\frac{\log b}{\log x}\bigg)f\bigg(\frac{\log c}{\log x}\bigg)\frac{1}{T}\int_T^{2T} (abc)^{it} dt \\
    & \hspace{9cm} \ll \frac{1}{T} \left( \sum_{a \le x} \frac{1}{\sqrt{a}} \right)^3 \ll \frac{x^{3/2}}{T}.
\end{align*}
The claim follows.
\end{proof}

\begin{proposition}
    For any $2 \le x \le T^{1/3}$, we have
    \begin{equation}\notag
        \frac{1}{T} \int_T^{2 T} \mathcal P(t)^3 \, dt = O \left( \frac{1}{\log x} \right).
    \end{equation}
\end{proposition}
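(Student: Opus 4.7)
The plan is to proceed in complete analogy with the proof of the corresponding statement for $P(t)^3$. Expanding the cube of $\mathcal{P}(t)$ yields a triple sum over prime powers $a, b, c \le x$ weighted by integrals of the form
\[
I_{abc} := \frac{1}{T}\int_T^{2T}\sin(t\log a)\sin(t\log b)\sin(t\log c)\,dt.
\]
The key algebraic step is to apply the triple product-to-sum identity
\[
\sin A \sin B \sin C = \tfrac{1}{4}\bigl[\sin(A+B-C) + \sin(A-B+C) + \sin(-A+B+C) - \sin(A+B+C)\bigr],
\]
which rewrites each $I_{abc}$ as a linear combination of four oscillatory integrals $\frac{1}{T}\int_T^{2T}\sin(\rho t)\,dt$, with $\rho$ equal to $\log(ab/c)$, $\log(ac/b)$, $\log(bc/a)$, or $-\log(abc)$.

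The crucial observation distinguishing this case from that of $P(t)^3$ is that the ``diagonal'' contributions vanish identically. Whereas the cosine case produced its main term on $ab = c$ (and its permutations) via $\cos 0 = 1$, here one has $\sin 0 = 0$, so the diagonal drops out entirely. This is the structural reason why $M_3^{\Im}$ is of size $o(1)$, in contrast to $M_3^{\Re}$, which acquires the nontrivial constant $c_P$.

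For the off-diagonal contributions I would use the trivial bound $|\int_T^{2T}\sin(\rho t)\,dt|\le 2/|\rho|$. The $\sin(t\log(abc))$ term has $|\log(abc)|\ge \log 8$, giving a contribution bounded by $\frac{1}{T}\bigl(\sum_{n\le x}\frac{\Lambda(n)|\mathfrak{f}(\log n/\log x)|}{\sqrt{n}\log n}\bigr)^3$, which using the crude bound $|\mathfrak{f}(u)|\ll 1/u$ together with standard partial summation is $\ll x^{3/2}/T$ up to logarithmic factors. For the near-diagonal terms such as $\sin(t\log(ab/c))$ with $ab\ne c$, the inequality $|\log(ab/c)|\gg 1/c\ge 1/x$ (since $|ab-c|\ge 1$) yields $|\int|\ll x$ and hence a total contribution $\ll x^{5/2}/T$. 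Under the hypothesis $x\le T^{1/3}$, both bounds are $\ll T^{-1/6}$, which is comfortably smaller than the claimed $1/\log x$. No serious obstacle is expected; the only minor bookkeeping concern is handling the $\mathfrak{f}$-weights near zero, which is controlled by the $1/u$ singularity.
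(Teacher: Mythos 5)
Your proposal is correct and follows essentially the same route as the paper, which treats $\mathcal P(t)^3$ as a ``straight-forward adaptation'' of the $P(t)^3$ argument and notes that the diagonal contributions cancel because of the sine; your observation that $\sin 0=0$ kills the diagonal in the product-to-sum expansion is exactly the paper's remark that the six exponential terms detecting $ab=c$ (and permutations) come in pairs with opposite signs. The off-diagonal bounds $\ll x^{5/2+o(1)}/T$ and $\ll x^{3/2+o(1)}/T$ match the paper's estimates and are comfortably $O(1/\log x)$ for $x\le T^{1/3}$.
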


\begin{proof}
    We will not give the proof details, which are a straight-forward adaptation of the previous argument. We simply note that due to the appearance of sine in place of cosine, the six terms that contribute will consist of three each with opposite signs, thus cancelling in the end.
\end{proof}

\section{The contribution from two prime powers and a zero}

\begin{proposition}\label{P^2Z_Re}
    Assume RH. Uniformly for $2 \le x \le T^{1/4}$, we have
    \begin{equation}\notag
        \frac{1}{T} \int_T^{2 T} P(t)^2 Z(t) \, dt = O \left( \frac{1}{\log x} \right).
    \end{equation}
\end{proposition}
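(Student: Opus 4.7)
The plan is to expand $P(t)^2$ via the product-to-sum identity
$$P(t)^2 = \tfrac{1}{2}\sum_{a,b\le x}\frac{\Lambda(a)\Lambda(b)f_a f_b}{\sqrt{ab}\log a\log b}\bigl[\cos(t\log(ab))+\cos(t\log(a/b))\bigr], \qquad f_a := f(\log a/\log x),$$
and to split $Z(t)=Z_1(t)+Z_2(t)$ with $Z_1(t)=-\sum_\gamma h((\gamma-t)\log x)$ and $Z_2(t)=\hat h(0)\log(t/(2\pi))/(2\pi\log x)$, writing $\frac{1}{T}\int_T^{2T} P^2 Z\,dt = S_1+S_2$. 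The strategy is to extract main terms of size $(\log T)(\log\log x)/\log x$ in both $S_1$ and $S_2$ (arising from the diagonal $a=b$), observe that they cancel exactly, and bound everything else by $O(1/\log x)$.

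For $S_2$, I would apply Lemma \ref{CosLogInt}. The off-diagonal contributions are $\ll \log T/(|\log(ab^{\pm 1})|T)$ per pair, with $|\log(a/b)| \gg 1/x$ for distinct prime powers, summing to $O(x^2\log T/(T\log^2 x)) = o(1/\log x)$ for $x\le T^{1/4}$. The diagonal $a = b$ (via $\cos(t\log(a/b))=1$) gives
$$S_2 = \frac{\hat h(0)(\log T+C)}{4\pi\log x}\sum_{a\le x}\frac{\Lambda(a)^2 f_a^2}{a\log^2 a} + o\bigl(1/\log x\bigr),$$
where $\frac{1}{T}\int_T^{2T}\log(t/(2\pi))\,dt = \log T+C$ exactly with $C = \log(2/(e\pi))$.

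For $S_1$, for each pair $(a,b)$ and each $n\in\{ab,a/b,b/a\}$, I would use Lemma \ref{6may.2} to extend the zero sum and the integration over all of $\mathbb{R}$ (at the cost of $O(T^{-1/2}\log^2 T)$), then compute the Fourier integral $\int_{\mathbb{R}} n^{it}h((\gamma-t)\log x)\,dt = n^{i\gamma}\hat h(\log n/(2\pi\log x))/\log x$, and finally sum over $\gamma$ via Landau--Gonek (Lemma \ref{GoneksFormula}). For the $n=1$ piece (only $a=b$ in the $a/b$ expansion), the Riemann--von~Mangoldt formula yields $N(2T)-N(T) = \frac{T(\log T+C)}{2\pi} + O(\log T/\log\log T)$ under RH with \emph{the same} constant $C$, which, together with a careful treatment of the boundary corrections of $\int_T^{2T} h((\gamma-t)\log x)\,dt$, produces $-\frac{\hat h(0)(\log T+C)}{4\pi\log x}\sum_a\frac{\Lambda(a)^2 f_a^2}{a\log^2 a} + o(1/\log x)$, exactly cancelling the main term of $S_2$. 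For $n\ne 1$ not a prime power, Landau--Gonek yields only the error $O(x^2\log^2 T)$ per pair, giving a total of $O(x^3\log^2 T/T) = o(1/\log x)$. For $n\ne 1$ a prime power (forcing $a = p^\alpha$ and $b = p^\beta$), the Landau--Gonek main term $-\Lambda(n)T/(2\pi\sqrt n)$ combined with the prefactor $1/\log x$ produces the absolutely convergent double sum
$$\frac{1}{\log x}\sum_p \log p \sum_{\alpha,\beta\ge 1}\frac{|\hat h((\alpha\pm\beta)\log p/(2\pi\log x))|}{\alpha\beta\, p^{\max(\alpha,\beta)}} \ll \frac{1}{\log x}\sum_p \frac{\log p}{p(p-1)} = O(1/\log x).$$

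The main obstacle is the delicate cancellation between $S_2$ and the $n=1$ part of $S_1$: both are individually of size $(\log T)(\log\log x)/\log x$, so a careless estimation of either main term would spoil the target bound by a factor of $(\log T)(\log\log x)$. The cancellation is guaranteed by the numerical coincidence that $\frac{1}{T}\int_T^{2T}\log(t/(2\pi))\,dt$ and $\frac{2\pi}{T}[N(2T)-N(T)]$ both equal $\log T + \log(2/(e\pi))$ to leading order, an agreement that traces back to the Riemann--von~Mangoldt formula. Beyond this, the argument reduces to routine bookkeeping involving Landau--Gonek error estimates and convergent prime-power sums.
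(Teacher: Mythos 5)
Your proposal is correct and follows essentially the same route as the paper: the same split of $Z$ into the $\hat h(0)\log(t/2\pi)$ term and the zero sum, the same diagonal extraction via Lemma \ref{CosLogInt}, the same use of Lemma \ref{6may.2} and the Fourier transform of $h$, and the same cancellation of the two $(\log T)(\log\log x)/\log x$ main terms through the agreement of $\frac{1}{T}\int_T^{2T}\log\frac{t}{2\pi}\,dt$ with $\frac{2\pi}{T}(N(2T)-N(T))$, with Landau--Gonek handling the off-diagonal and prime-power contributions exactly as in the paper.
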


\begin{proof}
    Write
    \[ \frac{1}{T} \int_T^{2 T} P(t)^2 Z(t) \, dt = \mathcal{I}_{P^2 \hat h} - \mathcal{I}_{P^2 \gamma}  \]
    with
    \begin{equation}\begin{split}
        \mathcal{I}_{P^2 \hat h} &= \frac{\hat h(0)}{2 \pi \log x} \sum_{a, b \le x} \frac{\Lambda(a) \Lambda(b)}{\sqrt{ab} (\log a) (\log b)} f \left( \frac{\log a}{\log x} \right) f \left( \frac{\log b}{\log x} \right) \times \\ &\hspace{6cm}\frac{1}{T} \int_T^{2 T} \cos(t \log a) \cos(t \log b) \log \frac{t}{2 \pi} \, dt
    \end{split}\end{equation}
    and
    \begin{equation}\begin{split}\label{PsqGamma}
        \mathcal{I}_{P^2 \gamma} &= \sum_{a, b \le x} \frac{\Lambda(a) \Lambda(b)}{\sqrt{ab} (\log a) (\log b)} f \left( \frac{\log a}{\log x} \right) f \left( \frac{\log b}{\log x} \right) \times \\ &\hspace{5cm}\sum_\gamma \frac{1}{T} \int_T^{2 T} \cos(t \log a) \cos(t \log b) h((\gamma-t)\log x) \, dt.
    \end{split}\end{equation}
    By Lemma \ref{CosLogInt} combined with standard trigonometric identities, we have
    \[ \frac{1}{T} \int_T^{2 T} \cos(t \log a) \cos(t \log b) \log \frac{t}{2 \pi} \, dt = \begin{cases}
        \frac{1}{2} (\log T + C) + O\left(\frac{\log T}{T} \right), &\mbox{ if } a = b, \\
        O \left( \frac{x \log T}{T} \right), &\mbox{ if } a \ne b
    \end{cases} \]
    with $C = \log 2/\pi - 1$.
    Therefore, we see that
    \begin{equation}\begin{split}
        \mathcal{I}_{P^2 \hat h} &= \left(\log T + C + O \left( \frac{\log T}{T} \right)\right) \frac{\hat h (0)}{4 \pi \log x} \sum_{n \le x} \frac{\Lambda(n)^2}{n (\log n)^2} f^2 \left( \frac{\log n}{\log x} \right) \\ &\hspace{3cm} + O \left( \frac{x \log T}{T \log x} \sum_{a, b \le x} \frac{\Lambda(a) \Lambda(b)}{\sqrt{ab} (\log a) (\log b)} f \left( \frac{\log a}{\log x} \right) f \left( \frac{\log b}{\log x} \right) \right).
    \end{split}\end{equation}
    Following the same strategy as before, one obtains that the last error term here is $O \left( \frac{x^2 \log T}{T \log x} \right)$,
    which is certainly acceptable. Furthermore, we have
    \[ \sum_{n\le x} \frac{\Lambda(n)^2}{n (\log n)^2}f^2 \left( \frac{\log n}{\log x} \right) \ll \sum_{n\le x} \frac{\Lambda(n)^2}{n (\log n)^2} \ll \log \log x,\]
    so that
    \begin{equation}\label{Psqhath}
        \mathcal{I}_{P^2 \hat h} = \left(\log T + C \right) \frac{\hat h (0)}{4 \pi \log x} \sum_{n \le x} \frac{\Lambda(n)^2}{n (\log n)^2} f^2 \left( \frac{\log n}{\log x} \right) + O \left( \frac{x^2 \log T}{T} \right).
    \end{equation}
    We now turn our attention to the right-hand side of \eqref{PsqGamma}. First of all, by Lemma \ref{6may.2} \eqref{a}, we have
    \begin{equation}\begin{split}
        &\sum_\gamma \int_T^{2 T} \cos(t \log a) \cos(t \log b) h((\gamma-t)\log x) \, dt \\ &\hspace{2.5cm} = \sum_{T < \gamma \le 2 T} \int_{\mathbb{R}} \cos(t \log a) \cos(t \log b) h((\gamma-t)\log x) \, dt + O \left( \sqrt{T} \log T \right).
    \end{split}\end{equation}
    Writing $\cos(t \log a) \cos(t \log b) = \frac{1}{2} \Re \left(\frac{a}{b} \right)^{i t} + \frac{1}{2}\Re (ab)^{i t}$ and substituting $y = (\gamma-t)\log x$, this is in turn
    \begin{equation}\begin{split}
        &= \frac{1}{2 \log x} \Re \bigg[ \bigg( \sum_{T < \gamma \le 2 T} \left( \frac{a}{b} \right)^{i \gamma} \bigg) \int_{\mathbb{R}} h(y) e^{-i y \log a/b / \log x} \, dy \bigg] \\ &+ \frac{1}{2 \log x} \Re \bigg[ \bigg( \sum_{T < \gamma \le 2 T} (ab)^{i \gamma} \bigg) \int_{\mathbb{R}} h(y) e^{-i y \log ab / \log x} \, dy \bigg] + O \left( \sqrt{T} \log T \right) \\
        &= \frac{\hat h \left( \frac{\log a/b}{2 \pi \log x} \right)}{2 \log x} \Re \sum_{T < \gamma \le 2 T} \left( \frac{a}{b} \right)^{i \gamma} + \frac{\hat h \left( \frac{\log a b}{2 \pi \log x} \right)}{2 \log x} \Re \sum_{T < \gamma \le 2 T} (a b)^{i \gamma} + O \left( \sqrt{T} \log T \right).
    \end{split}\end{equation}
    Now, we continue by isolating the contribution of the diagonal $a = b$ of the first summand here to $\mathcal{I}_{P^2 \gamma}$. Since
    \[ \frac{1}{T} \sum_{T < \gamma \le 2 T} 1 = \frac{\log T+C}{2 \pi} + O \left( \frac{\log T}{T} \right), \]
    this contribution matches exactly the main term in \eqref{Psqhath}. Hence, it now suffices to show that the remaining contributions to $\mathcal{I}_{P^2 \gamma}$ are small, that is to say we need
    \begin{equation}\label{6may.3}
        \frac{1}{T} \sum_{a \ne b \le x} \frac{\Lambda(a) \Lambda(b)}{\sqrt{ab} (\log a) (\log b)} f \left( \frac{\log a}{\log x} \right) f \left( \frac{\log b}{\log x} \right) \hat h \left( \frac{\log a/b}{\log x} \right) \Re \sum_{T < \gamma \le 2 T} \left( \frac{a}{b} \right)^{i \gamma} \ll 1
    \end{equation}
    and
    \begin{equation}\label{6may.5}
        \frac{1}{T} \sum_{a, b \le x} \frac{\Lambda(a) \Lambda(b)}{\sqrt{ab} (\log a) (\log b)} f \left( \frac{\log a}{\log x} \right) f \left( \frac{\log b}{\log x} \right) \hat h \left( \frac{\log a b}{\log x} \right) \Re \sum_{T < \gamma \le 2 T} (ab)^{i \gamma} \ll 1
    \end{equation}
    as well as
    \begin{equation}
        \frac{\log T}{\sqrt{T}} \sum_{a, b \le x} \frac{\Lambda(a) \Lambda(b)}{\sqrt{ab} (\log a) (\log b)} f \left( \frac{\log a}{\log x} \right) f \left( \frac{\log b}{\log x} \right) \ll \frac{1}{\log x}.
    \end{equation}
    This last claim is obvious by using our usual bound, which in fact gives that the left-hand side is $\ll \frac{x (\log T)}{\sqrt{T}}$.

    As for \eqref{6may.3}, we will restrict to $b < a$, the other range follows in the same way. Then Lemma \ref{GoneksFormula} gives that
    \begin{equation}\label{6may.4}
        \frac{1}{T} \Re \sum_{T < \gamma \le 2 T} \left( \frac{a}{b} \right)^{i \gamma} = - \frac{\Lambda(a/b)}{2 \pi} \sqrt{\frac{b}{a}} + O \left( \frac{x (\log T)^2}{T} \right).
    \end{equation}
    Firstly, the error term in this bound gives a contribution to \eqref{6may.3} of at most
    \[ \ll \frac{x(\log T)^2}{T} \sum_{a,  b \le x} \frac{\Lambda(a) \Lambda(b)}{\sqrt{ab} (\log a) (\log b)} \ll \frac{x^2 (\log T)^2}{T}, \]
    which is acceptable. The main term of \eqref{6may.4} only contributes when $\frac{a}{b}$ is a prime power, but since each of them are themselves prime powers, we must have $a = p^\alpha, b = p^\beta$ for some $\alpha > \beta \ge 1$. This gives a contribution to \eqref{6may.3} of
    \[  \ll \sum_{\substack{p, \alpha, \beta \\ \alpha > \beta \ge 1 }} \frac{\log p}{\alpha \beta p^\alpha} \ll 1, \]
    so that \eqref{6may.3} follows. The proof of \eqref{6may.5} works in the same way, the only difference being that we apply Lemma \ref{GoneksFormula} with numerator $ab \le x^2$ and denominator $1$.
\end{proof}

Arguing similarly, we get the analogous result for the imaginary part.

\begin{proposition}
    Assume RH. Uniformly for $2 \le x \le T^{1/4}$, we have
    \begin{equation}\notag
        \frac{1}{T} \int_T^{2 T} \mathcal P(t)^2 \mathcal Z(t) \, dt = O \left( \frac{1}{\log x} \right).
    \end{equation}
\end{proposition}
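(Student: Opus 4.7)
The plan is to mirror the proof of Proposition \ref{P^2Z_Re} almost verbatim, taking advantage of two structural simplifications in the imaginary-part setting. First, there is no term analogous to $\frac{\hat h(0)\log(t/2\pi)}{2\pi\log x}$ in $\mathcal Z$, so the entire integral reduces to a single expression parallel to $\mathcal{I}_{P^2\gamma}$ in \eqref{PsqGamma}. Second, the auxiliary function $\mathfrak h$ is odd, so its Fourier transform $\hat{\mathfrak h}$ is purely imaginary and in particular vanishes at the origin. Opening the product and using the identity $\sin(t\log a)\sin(t\log b)=\frac{1}{2}[\cos(t\log(a/b))-\cos(t\log(ab))]$, one arrives at an expression of essentially the same shape as the right-hand side of \eqref{PsqGamma}, but with $\mathfrak h$ replacing $h$ and a minus sign between the two cosine pieces.

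Next I would apply Lemma \ref{6may.2}(a) to restrict $\gamma$ to $[T,2T]$ and extend the $t$-integral to $\mathbb R$. A short Fourier computation using the oddness of $\mathfrak h$ yields, for $c>0$,
\[ \int_{\mathbb R}\cos(t\log c)\,\mathfrak h((t-\gamma)\log x)\,dt = -\frac{i\hat{\mathfrak h}\bigl(\tfrac{\log c}{2\pi\log x}\bigr)}{\log x}\,\sin(\gamma\log c), \]
where $-i\hat{\mathfrak h}$ is bounded and real-valued. Applying this with $c=a/b$ and $c=ab$ reveals the crucial simplification: the diagonal $a=b$ contributes \emph{nothing}, since $\log(a/b)=0$ forces $\sin(\gamma\log(a/b))=0$ (equivalently, $\hat{\mathfrak h}(0)=0$). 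This sharply contrasts with the real-part proof, where the diagonal produced a main term cancelling $\mathcal{I}_{P^2\hat h}$; here no such main term appears on either side, consistently with the absence of the $\log(t/2\pi)$ term in $\mathcal Z$.

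For the off-diagonal pieces ($a\ne b$ in the $c=a/b$ part, and arbitrary $a,b$ in the $c=ab$ part), I would invoke Lemma \ref{GoneksFormula}, observing that its main term $-\Lambda(c)/(2\pi\sqrt c)$ is \emph{real} and therefore drops out upon taking imaginary parts in $\sum_{T<\gamma\le 2T}\sin(\gamma\log c)=\Im\sum_{T<\gamma\le 2T} c^{i\gamma}$. Only the Landau-Gonek error $O(x'(\log T)^2)$ survives, with $x'=x$ for $c=a/b$ and $x'=x^2$ for $c=ab$. Combined with the refined Dirichlet polynomial bound $\sum_{a,b\le x}\frac{\Lambda(a)\Lambda(b)}{\sqrt{ab}\log a\log b}\ll x/(\log x)^2$ (via PNT and summation by parts), this together with the small residual error from Lemma \ref{6may.2}(a) gives a total bound comfortably of size $o(1/\log x)$ under $x\le T^{1/4}$. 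There is no genuine obstacle here; the only point requiring modest care is to use the refined $x/(\log x)^2$ estimate in place of a trivial $x^2$ bound, so that every error contribution safely beats $1/\log x$.
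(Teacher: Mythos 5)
Your argument is correct and takes essentially the same route as the paper, whose own proof simply reduces to the real-part computation via the identity $\sin(t\log a)\sin(t\log b)=\tfrac12\cos(t\log(a/b))-\tfrac12\cos(t\log(ab))$. The two simplifications you isolate are exactly the relevant ones: the oddness of $\mathfrak h$ (equivalently $\hat{\mathfrak h}(0)=0$) annihilates the diagonal, and the Landau--Gonek main term is real and hence killed upon taking the sine transform, leaving only error terms that are negligible for $x\le T^{1/4}$.
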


\begin{proof}
    The proof follows closely that of Proposition \ref{P^2Z_Re}. With the same strategy we used above to handle $\mathcal I_{P^2\gamma}$, by applying Lemma \ref{6may.2} part \eqref{a}, we obtain 
    \begin{equation}\begin{split}\notag
        \frac{1}{T} \int_T^{2 T} \mathcal P(t)^2 \mathcal Z(t) \, dt
        = &\frac{1}{T}\sum_{a,b\leq x}\frac{\Lambda(a)\Lambda(b)}{\sqrt{ab}\log a\log b} \mathfrak f\bigg(\frac{\log a}{\log x}\bigg) \mathfrak f\bigg(\frac{\log b}{\log x}\bigg) \\
        &\times \int_{\mathbb R} \sum_{T\leq \gamma \leq 2T} \mathfrak h((t-\gamma)\log x) \sin(t\log a)\sin(t\log b) dt
        + O\bigg(\frac{x\log T}{\sqrt{T}}\bigg).
    \end{split}\end{equation}
    Using the identity $\sin(t\log a)\sin(t\log b) = \frac{1}{2}\Re(ab)^{it}-\frac{1}{2}\Re(\frac{a}{b})^{it}$, 
    we can then perform the same calculations as in the previous proof, and the claim follows.
    \end{proof}

\section{The contribution from one prime power and two zeros}

We introduce the following notation:
\begin{equation}\label{LDef}\mathcal L(b) := \frac{(b+1)\log(1+b)+(1-b)\log(1-b)}{b}.
\end{equation}

\begin{proposition}
    Assume RH and Conjecture \ref{TPCC}. Uniformly for $2 \le x \le T^{1/3}$, we have
    \begin{equation}\notag
        \frac{1}{T}\int_T^{2T} P(t)Z(t)^2 dt = \frac{1}{2} \int_0^\beta  \bigg(\frac{g(\frac{b}{\beta})}{\beta}-\frac{1}{b}\bigg) \mathcal L(b) \,db
        + O\bigg(\frac{1}{\log x}\bigg),
    \end{equation}
    where $g$ is defined in \eqref{DefAuxFunctions} and $\mathcal L$ in \eqref{LDef}.
\end{proposition}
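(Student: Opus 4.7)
The strategy is to decompose $Z(t) = Z_1(t) + Z_2(t)$ with $Z_1(t) := -\sum_\gamma h((\gamma-t)\log x)$ and $Z_2(t) := \hat h(0) \log(t/2\pi)/(2\pi \log x)$, and expand $P Z^2 = P Z_1^2 + 2 P Z_1 Z_2 + P Z_2^2$. The mixed pieces $P Z_1 Z_2$ and $P Z_2^2$ each involve at most one sum over zeros, and are dispatched by the methods of Proposition \ref{P^2Z_Re}: expand $P(t)$ into cosines, unfold the zero-sum via Lemma \ref{6may.2}\eqref{a}, dispose of the log-times-cosine integrals using Lemma \ref{CosLogInt}, and apply the Landau-Gonek formula (Lemma \ref{GoneksFormula}) to the off-diagonal pieces $\sum_\gamma n^{\pm i\gamma}$. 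A routine calculation shows both contributions are $O(1/\log x)$.

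The essential term is $\mathcal{I} := \frac{1}{T}\int_T^{2T} P(t) Z_1(t)^2 \, dt$. Writing out $Z_1^2 = \sum_{\gamma, \gamma'} h((\gamma-t)\log x) h((\gamma'-t)\log x)$ and applying Lemma \ref{6may.2}\eqref{b}, we restrict to $T < \gamma, \gamma' \le 2T$ and extend the $t$-integral to $\mathbb{R}$ at negligible cost. Since $\hat h$ is explicitly given in \eqref{PropertiesAuxiliaryFunctions}, Fourier inversion yields
\[ \int_{\mathbb R} n^{it} h((\gamma-t)\log x)\, h((\gamma'-t)\log x)\, dt = \frac{n^{i\gamma'}}{2\pi \beta \log x} \int_{\mathbb R} \hat h_\beta(\alpha)\, \hat h_\beta\!\left(\tfrac{\log n}{\log T} - \alpha\right) T^{i\alpha(\gamma-\gamma')}\, d\alpha, \]
where $\beta := \log x/\log T$ and $\hat h_\beta(\alpha) := \hat h(\alpha/(2\pi\beta))$. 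Relabeling $\gamma \leftrightarrow \gamma'$, the sum over zeros takes the form $\sum n^{i\gamma} \tilde r_n(\tilde\gamma - \tilde\gamma')$, where $\tilde r_n$ is the function with Fourier transform $\hat{\tilde r}_n(\alpha) = \hat h_\beta(\alpha) \hat h_\beta(\alpha + \log n/\log T)$. One verifies the symmetry $\hat{\tilde r}_n(-\alpha - \log n/\log T) = \hat{\tilde r}_n(\alpha)$ (from evenness of $\hat h$), the decay $\hat{\tilde r}_n'(\alpha) \ll |\alpha|^{-3}$ (from $\hat h_\beta(\alpha) \sim \pi\beta/|\alpha|$ at infinity), and so $\tilde r_n \in \mathfrak{C}(\mathbb R)$ uniformly in $n$, permitting the application of Conjecture \ref{TPCC}.

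Inserting the conclusion of Conjecture \ref{TPCC} and exploiting the above symmetry, we arrive at
\[ \mathcal I = -\frac{1}{(2\pi)^2 \beta \log x} \sum_{n \le x} \frac{\Lambda(n)^2}{n \log n} f\!\left(\tfrac{\log n}{\log x}\right) \Big[\hat{\tilde r}_n(0) + \int_{\mathbb R} \hat{\tilde r}_n(\alpha)\, m_n(\alpha)\, d\alpha\Big] + O\!\left(\tfrac{1}{\log x}\right), \]
where the $\mathcal{E}_n$-error from Conjecture \ref{TPCC} contributes $O(1/\log x)$ after summation, and the prime-power terms with $k \ge 2$ contribute negligibly via $\sum_{p, k \ge 2}(\log p)^2/p^k = O(1)$. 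The remaining task, and the main obstacle of the proof, is to convert this sum over primes $p \le x$ into an integral over $b := \log p/\log T \in (0, \beta]$ via the prime number theorem, and to identify the outcome with the claimed closed form. Here $\hat{\tilde r}_n(0) = \pi^2 g(0)\, g(b/\beta)$ (using $\hat h(0) = \pi g(0)$ and $\hat h_\beta(\log n/\log T) = \pi g(b/\beta)$ for $n \le x$), and the integral $\int \hat{\tilde r}_n(\alpha) m_n(\alpha) d\alpha$ draws on the tail regime $\hat h_\beta(\alpha) = \pi\beta/|\alpha|$ in the support $|\alpha| \gtrsim 1$ of $m_n$. A delicate cancellation between the apparently $1/\beta$-sized contribution from $\hat{\tilde r}_n(0)$ and the tail contribution from the $m_n$-integral produces precisely the difference $g(b/\beta)/\beta - 1/b$ (which is $\hat h_\beta(b)/(\pi\beta)$ minus its large-argument asymptotic $1/|b|$). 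The remaining factor $\mathcal L(b)$ emerges, via the identity $g(u) = (1-f(u))/u$, from combining $f(b/\beta)$ with the piecewise-linear profile of $m_n$ and the prime-counting density $du/\log u$, after an elementary but intricate computation.
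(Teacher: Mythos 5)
Your overall architecture matches the paper's: the same decomposition $Z=Z_1+Z_2$, the same use of Lemma \ref{6may.2}, the Fourier identification of the zero-sum as $\sum n^{i\gamma}\tilde r_n(\tilde\gamma-\tilde\gamma')$, and the application of Conjecture \ref{TPCC}. But there is a genuine error at the first step: the cross term $\frac{1}{T}\int_T^{2T}P(t)Z_1(t)Z_2(t)\,dt$ is \emph{not} $O(1/\log x)$. Since $Z_2(t)\asymp 1$, the Landau--Gonek diagonal in $\frac{1}{T}\int P(t)\sum_\gamma h((\gamma-t)\log x)\,dt$ produces a main term of size $\asymp\frac{1}{\log x}\sum_{n\le x}\frac{\Lambda(n)^2}{n\log n}\,\hat h\bigl(\tfrac{\log n}{2\pi\log x}\bigr)f\bigl(\tfrac{\log n}{\log x}\bigr)$, which is of order $1$. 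This term is exactly what cancels the $\delta(\alpha)$-contribution --- your $\hat{\tilde r}_n(0)$ term --- coming out of Conjecture \ref{TPCC} in the $PZ_1^2$ piece. Because you discard the cross term but retain $\hat{\tilde r}_n(0)$ in your bracket, your final expression carries an uncancelled order-one quantity proportional to $\beta^{-2}\int_0^\beta f(b/\beta)\,g(b/\beta)\,db$, and the stated asymptotic cannot come out.

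Relatedly, your account of where the closed form comes from is not right, and it is masking the error above. There is no "delicate cancellation" between $\hat{\tilde r}_n(0)$ and the tail of the $m_n$-integral: the factor $\frac{g(b/\beta)}{\beta}-\frac{1}{b}$ is simply $-\frac{f(b/\beta)}{b}$ via the identity $f(u)=1-u\,g(u)$, and it arises from the smoothing weight $f(\log n/\log x)$ in $P(t)$ together with the conversion $\sum_n\frac{\Lambda(n)^2}{n(\log n)^2}\to\int\frac{db}{b}$ by partial summation. The factor $\mathcal L(b)$ comes from the one genuinely computational step you have left undone: since $m_n$ vanishes on $(-1,1-\tfrac{\log n}{\log T})$, one may replace $\hat{\tilde r}_n(\alpha)$ on the support of $m_n$ by its tail form $\frac{(\pi\beta)^2}{|\alpha|\,|\alpha+\log n/\log T|}$, and then evaluate explicitly $\int_{\mathbb R}\frac{m_n(\alpha)}{|\alpha|\,|\alpha+b|}\,d\alpha=\frac{2\mathcal L(b)}{b}$ with $b=\frac{\log n}{\log T}$. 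To repair the argument you must (i) compute the cross term honestly (it equals, up to the factor $-2$ in the binomial expansion, half of the $\delta$-contribution, so the two cancel), and (ii) carry out the $m_n$-integral and the partial summation rather than asserting the outcome.
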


\begin{proof}
    According to the definition of $Z(t)$ in the statement of Proposition \ref{P&Z}, we write
    \begin{equation}\label{1oct.10}
        \frac{1}{T}\int_T^{2T} P(t)Z(t)^2 dt 
        = \mathcal I_{P\hat h^2} - 2\mathcal I_{P\hat h\gamma} + \mathcal I_{P\gamma^2}
    \end{equation}
    with
    \begin{equation}\begin{split}\notag
        \mathcal I_{P\hat h^2}
        &= \frac{\hat h(0)^2}{(2\pi\beta)^2}
        \sum_{n\leq x} \frac{\Lambda(n)}{\sqrt{n}\log n} f\bigg(\frac{\log n}{\log x} \bigg) \frac{1}{T}\int_T^{2T} \cos(t\log n) \bigg ( \frac{\log\frac{t}{2\pi}}{\log T}\bigg)^2  dt \\
        \mathcal I_{P\hat h\gamma}
        &= \frac{\hat h(0)}{2\pi\beta}
        \sum_{n\leq x} \frac{\Lambda(n)}{\sqrt{n}\log n} f\bigg(\frac{\log n}{\log x} \bigg) \frac{1}{T}\int_T^{2T} \cos(t\log n) \frac{\log\frac{t}{2\pi}}{\log T} \sum_{\gamma} h((\gamma-t)\log x)  dt\\
        \mathcal I_{P\gamma^2}
        &= \sum_{n\leq x} \frac{\Lambda(n)}{\sqrt{n}\log n} f\bigg(\frac{\log n}{\log x} \bigg) \frac{1}{T}\int_T^{2T} \cos(t\log n) \sum_{\gamma,\gamma'} h((\gamma-t)\log x) h((\gamma'-t)\log x)  dt .
    \end{split}\end{equation}
    A direct application of Lemma \ref{CosLogInt} yields
    \begin{equation}\label{1oct.11}
        \mathcal I_{P\hat h^2}
        \ll \frac{1}{T} \sum_{n\leq x} \frac{\Lambda(n)}{\sqrt{n}(\log n)^2} 
        \ll \frac{\sqrt{x}(\log T)^2}{T}.
    \end{equation}
    We now turn to $\mathcal I_{P\hat h\gamma}$. 
    Let $\varphi$ be a function such that $\varphi(t)=\log(\frac{t}{2\pi})/\log T$ for $T\leq t\leq 2T$, $\varphi(t)=1+O(1/\log T)$ for any $t$, and $\varphi'(t)\ll 1/((1+|t|)\log T)$
    uniformly for $t\in\mathbb R$. For example, we can take a (fixed) smooth function $\tilde \varphi$ with 
    \[ \tilde \varphi(s) = \log \frac{s}{2 \pi} \text{ for } s \in [1,2] \text{ and } \tilde \varphi(s) = 0 \text{ for } s \not \in \left[ \frac{1}{2},\frac{5}{2} \right] \]
    and set
    \[ \varphi(t) = 1 + \frac{\tilde \varphi(t/T)}{\log T}. \]
    We then write
    \begin{equation}\begin{split}\notag
        \mathcal I_{P\hat h\gamma}
        &= \frac{\hat h(0)}{2\pi\beta}
        \sum_{n\leq x} \frac{\Lambda(n)}{\sqrt{n}\log n} f\bigg(\frac{\log n}{\log x} \bigg) \frac{1}{T}\int_T^{2T} \Re(n^{it}) \varphi(t) \sum_{\gamma} h((\gamma-t)\log x)  dt\\
        &= \Re\bigg(\frac{\hat h(0)}{2\pi\beta} \frac{1}{T\log x} 
        \sum_{n\leq x} \frac{\Lambda(n)}{\sqrt{n}\log n} f\bigg(\frac{\log n}{\log x} \bigg) \int_{\mathbb R} h(y) n^{-\frac{iy}{\log x}} \sum_{T\leq \gamma\leq 2T} n^{i\gamma} \varphi\bigg(\gamma-\frac{y}{\log x}\bigg) dy \bigg) \\
        & \hspace{12cm} + O\bigg(\frac{\sqrt{x}\log T}{\sqrt{T}}\bigg)
    \end{split}\end{equation}
    by Lemma \ref{6may.2} part \eqref{a}, and the change of variable $(\gamma-t)\log x=y$.
    Using Lemma \ref{GoneksFormula} and partial summation, one has
    \begin{equation}\begin{split}\notag
        \sum_{T\leq \gamma\leq 2T} n^{i\gamma} \varphi\bigg(\gamma-\frac{y}{\log x}\bigg)
        = -\frac{T}{2\pi}\frac{\Lambda(n)}{\sqrt n} + O(x(\log T)^2) + O\bigg(\frac{T\Lambda(n)}{\sqrt{n}\log T}\bigg)
    \end{split}\end{equation}
    uniformly for $y\in\mathbb R$.
    Hence, since $x\leq T^{1/3}$
    \begin{equation}\begin{split}\label{1oct.12}
        \mathcal I_{P\hat h\gamma}
        = -\frac{\hat h(0)}{(2\pi\beta)^2} \frac{1}{\log T} 
        \sum_{n\leq x} \frac{\Lambda(n)^2}{n\log n} f\bigg(\frac{\log n}{\log x} \bigg) \hat h\bigg(\frac{\log n}{2\pi\log x}\bigg) + O\bigg(\frac{1}{\log T}\bigg). 
    \end{split}\end{equation}
    Finally, we deal with $\mathcal I_{P\gamma^2}$. By Lemma \ref{6may.2} part \eqref{b} and the change of variable $(\gamma-t)\log x=y$, we obtain
    \begin{equation}\begin{split}\notag
        \mathcal I_{P\gamma^2}
        & = \Re\bigg( \frac{1}{T\log x}  \sum_{n\leq x} \frac{\Lambda(n)}{\sqrt{n}\log n} f\bigg(\frac{\log n}{\log x} \bigg) \sum_{T\leq \gamma,\gamma'\leq 2T} n^{i\gamma}\\
        & \hspace{2cm} \times\int_{\mathbb R}n^{-\frac{iy}{\log x}}h(y)h(y+(\gamma'-\gamma)\log x) dy\bigg) 
        + O\bigg( \frac{\sqrt{x}(\log T)^2}{\sqrt{T}}\bigg). 
    \end{split}\end{equation}
    The error term is clearly $\ll T^{-1/4}$, say. By introducing the Fourier pair
    \begin{equation}\notag
        \hat k_n(u) = \int_{\mathbb R} n^{-\frac{iy}{\log x}} h(y) h(y+u) dy, 
        \quad k_n(a) = \hat h(a) \hat h\bigg(-a+\frac{\log n}{2\pi\log x}\bigg),
    \end{equation}
    we write
    \begin{equation}\begin{split}\notag
        \mathcal I_{P\gamma^2}
        & = \Re\bigg( \frac{1}{T\log x}  \sum_{n\leq x} \frac{\Lambda(n)}{\sqrt{n}\log n} f\bigg(\frac{\log n}{\log x} \bigg) \sum_{T\leq \gamma,\gamma'\leq 2T} n^{i\gamma} \hat k_n((\gamma'-\gamma)\log x) \bigg) 
        + O(T^{-1/4}). 
    \end{split}\end{equation}
    Now we appeal to the Twisted Pair Correlation Conjecture to evaluate the double sum over zeros. Namely, we apply Conjecture \ref{TPCC} with $r(u)=\hat k_{n}(2\pi\beta u)$ and  $\hat r(a)=\frac{1}{2\pi\beta}k_{n}(\frac{-a}{2\pi\beta})$, and obtain
    \begin{equation}\begin{split}\label{1oct.13}
        \mathcal I_{P\gamma^2}
        & = -\frac{(\log T)^{-1}}{(2\pi\beta)^2}\sum_{n\leq x} \frac{\Lambda(n)^2}{n \log n} f\bigg(\frac{\log n}{\log x} \bigg) \bigg( 2k_n(0) + \int_{\mathbb R} k_n \bigg(\frac{-a}{2\pi\beta}\bigg) m_n(a) da \bigg)
        + O\bigg(\frac{1}{\log x}\bigg)
    \end{split}\end{equation}
    since $k_n$ is real over reals. We highlight the fact the contributions  
    from the first term of the error $\mathcal E_n$ can be clearly absorbed in the error term $O(1/\log x)$, as it becomes small on average over $n$. Putting together Equations \eqref{1oct.10}-\eqref{1oct.13}, we notice that the $k_n(0)$-term above cancels the main term in $\mathcal I_{P\hat h\gamma}$, and we have
    \begin{equation}\begin{split}\notag
        \frac{1}{T}\int_T^{2T} P(t)Z(t)^2 dt 
        &= -\frac{(\log T)^{-1}}{(2\pi\beta)^2}\sum_{n\leq x} \frac{\Lambda(n)^2}{n \log n} f\bigg(\frac{\log n}{\log x} \bigg) \int_{\mathbb R} k_n\bigg(\frac{-a}{2\pi\beta}\bigg) m_n(a) da
        + O\bigg(\frac{1}{\log x}\bigg).
    \end{split}\end{equation}
    Since $m_n(a)=0$ if $a\in(-1,1-\log n/\log T)$, in the integral above we can replace $k_n(\frac{-a}{2\pi\beta})$ by $\frac{(\pi\beta)^2}{|a||a+\log n/\log T|}$, in view of \eqref{PropertiesAuxiliaryFunctions}. Moreover, for any $1<n<T^{1/2}$, by direct computation one can see that 
    \begin{equation}\notag
        \int_{\mathbb R}  \frac{m_n(a)}{|a| |a+\frac{\log n}{\log T}|}  da
        = 2\frac{(\frac{\log n}{\log T}+1)\log(1+\frac{\log n}{\log T})+(1-\frac{\log n}{\log T}) \log(1-\frac{\log n}{\log T})}{(\frac{\log n}{\log T})^2} 
        = \frac{2 \mathcal L(\frac{\log n}{\log T})}{\frac{\log n}{\log T}}.
    \end{equation}
    Hence,
    \begin{equation}\begin{split}\notag
        \frac{1}{T}\int_T^{2T} P(t)Z(t)^2 dt 
        &= -\frac{1}{2}\sum_{n\leq x} \frac{\Lambda(n)^2}{n (\log n)^2} f\bigg(\frac{\log n}{\log x} \bigg) \mathcal L\bigg(\frac{\log n}{\log T}\bigg)
        + O\bigg(\frac{1}{\log x}\bigg).
    \end{split}\end{equation}
    Integrating by parts twice, and using in between the well-known formula (conditional on RH)
    $$\sum_{n\leq v} \frac{\Lambda(n)^2}{n(\log n)^2} = \log\log v + C + O(v^{-1/2+\varepsilon})$$
    for an explicit constant $C$ and any small $\varepsilon>0$, we obtain 
    \begin{equation}\notag
        \frac{1}{T}\int_T^{2T} P(t)Z(t)^2 dt 
        = -\frac{1}{2}\int_{2}^{x} f\bigg(\frac{\log v}{\log x} \bigg) \mathcal L\bigg(\frac{\log v}{\log T}\bigg) \frac{dv}{v\log v}
        + O\bigg(\frac{1}{\log x}\bigg).
    \end{equation}
    The claim follows by change of variable $\log v=b\log T$, and using $f(x)=1-xg(x)$.
\end{proof}

The analogous result for the imaginary part reads:

\begin{proposition}
    Assume RH and Conjecture \ref{TPCC}. Uniformly for $2 \le x \le T^{1/3}$, we have
    \begin{equation}\notag
        \frac{1}{T}\int_T^{2T} \mathcal P(t) \mathcal Z(t)^2 dt \ll \frac{1}{\log x}.
    \end{equation}
\end{proposition}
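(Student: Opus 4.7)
The plan is to mirror the proof of the real-part analogue, exploiting a substantial simplification: since $\mathcal Z(t)=\sum_\gamma \mathfrak h((t-\gamma)\log x)$ contains no additive constant/logarithmic term (unlike $Z(t)$), we only need to treat the analogue of $\mathcal I_{P\gamma^2}$. Expanding,
\begin{equation*}
\mathcal J := \frac{1}{T}\int_T^{2T}\mathcal P(t)\mathcal Z(t)^2\, dt = -\sum_{n\le x}\frac{\Lambda(n)}{\sqrt n\log n}\mathfrak f\!\left(\tfrac{\log n}{\log x}\right)\frac{1}{T}\int_T^{2T}\sin(t\log n)\sum_{\gamma,\gamma'}\mathfrak h((t-\gamma)\log x)\mathfrak h((t-\gamma')\log x)\,dt.
\end{equation*}
Following the steps in the previous proposition, one applies Lemma~\ref{6may.2}\eqref{b} to restrict the zeros to $[T,2T]$ and extend the $t$-integration to $\mathbb R$, writes $\sin(t\log n)=\Im n^{it}$, and substitutes $(t-\gamma)\log x=y$ to reformulate this as
\begin{equation*}
\mathcal J = -\frac{1}{T\log x}\sum_{n\le x}\frac{\Lambda(n)}{\sqrt n\log n}\mathfrak f\!\left(\tfrac{\log n}{\log x}\right)\Im\!\left[\sum_{T\le\gamma,\gamma'\le 2T}n^{i\gamma}\widehat{\mathfrak k}_n((\gamma'-\gamma)\log x)\right]+O(T^{-1/4}),
\end{equation*}
where $\widehat{\mathfrak k}_n(u):=\int_{\mathbb R}n^{-iy/\log x}\mathfrak h(y)\mathfrak h(y+u)\,dy$, whose Fourier dual $\mathfrak k_n$ satisfies $\mathfrak k_n(a) = \pm \widehat{\mathfrak h}(a)\widehat{\mathfrak h}(-a+\tfrac{\log n}{2\pi\log x})$.

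The crucial new observation is a parity argument. Because $\mathfrak h$ is \emph{odd}, its Fourier transform $\widehat{\mathfrak h}$ is purely imaginary (and odd), whence $\mathfrak k_n(a)$, being a product of two purely imaginary quantities, is \emph{real-valued} on $\mathbb R$. Applying Conjecture~\ref{TPCC} with $r(u):=\widehat{\mathfrak k}_n(2\pi\beta u)$ (where $\beta=\log x/\log T$), one computes $\hat r(\alpha)=\tfrac{1}{2\pi\beta}\mathfrak k_n(\alpha/(2\pi\beta))$, which is real. Hence the main term on the right-hand side of the conjecture is real, and taking $\Im[\,\cdot\,]$ annihilates it completely. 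Moreover, the $\delta(\alpha)$-contribution is automatically real (and in fact vanishes at $\alpha=0$, since $\widehat{\mathfrak h}(0)=0$), so no cancellation between a $\mathfrak k_n(0)$-term and an $\mathcal I_{\mathcal P\hat{\mathfrak h}\gamma}$-type term is needed — which matches the fact that $\mathcal Z(t)$ has no $\hat h(0)\log(t/2\pi)$ analogue.

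It remains only to bound the error terms stemming from the $O(\mathcal E_n)$ in Conjecture~\ref{TPCC}, which contributes to $\mathcal J$ an amount of order
\begin{equation*}
\frac{1}{T\log x}\sum_{n\le x}\frac{\Lambda(n)}{\sqrt n\log n}\cdot\frac{T}{2\pi}\frac{\Lambda(n)}{\sqrt n}\mathcal E_n \ll \frac{1}{\log x}\sum_{n=q^a\le x}\frac{\Lambda(n)^2}{n\log n}\left(\frac{(a-1)\log q+1}{\log T}\right).
\end{equation*}
The prime contribution ($a=1$) is bounded by $\frac{1}{\log x\,\log T}\sum_{p\le x}\frac{\log p}{p}\ll \frac{1}{\log T}$, and higher prime powers yield a similarly negligible bound by the standard estimate $\sum_{n=q^a,\,a\ge 2}\frac{(\log q)^2}{n}\ll 1$; together with the $O(T^{-1/4})$ truncation error, we obtain $\mathcal J \ll 1/\log x$. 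The main obstacle is making the parity cancellation rigorous while carefully tracking the Fourier conventions, and verifying that $r=\widehat{\mathfrak k}_n(2\pi\beta\,\cdot)$ lies in $\mathfrak C(\mathbb R)$ uniformly in $n\le x$ — which should be straightforward from the explicit formula for $\widehat{\mathfrak h}$ analogous to \eqref{PropertiesAuxiliaryFunctions}, ensuring acceptable implied constants when applying Conjecture~\ref{TPCC}.
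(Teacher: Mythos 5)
Your proposal is correct and follows essentially the same route as the paper: rewrite the integral via Lemma \ref{6may.2}\eqref{b} and the substitution $(t-\gamma)\log x=y$ as an imaginary part of a twisted sum over zeros against $\widehat{\mathfrak k}_n$, apply Conjecture \ref{TPCC}, observe that the main term is real because $\widehat{\mathfrak h}$ is purely imaginary (as $\mathfrak h$ is real and odd) so that it is killed by $\Im$, and check that the $O(\mathcal E_n)$ errors sum to $O(1/\log x)$. In fact you supply more detail than the paper does, in particular the explicit parity justification for the realness of $\kappa_n$ and the explicit summation of the $\mathcal E_n$ contribution over prime powers.
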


\begin{proof}
    With a now familiar strategy, we write
    \begin{equation}\notag
        \frac{1}{T}\int_T^{2T} \mathcal P(t) \mathcal Z(t)^2 dt 
        = \Im\bigg(\frac{-1}{T\log x} \sum_{n\leq x} \frac{\Lambda(n)}{\sqrt{n}\log n} \mathfrak f\bigg(\frac{\log n}{\log x}\bigg) \sum_{T\leq \gamma,\gamma'\leq 2T} n^{i\gamma} \hat{\kappa}_n((\gamma-\gamma')\log x) \bigg) + O(T^{-1/4}),
    \end{equation}
    with
    \begin{equation}\notag
        \hat\kappa_n(u)
        = \int_{\mathbb R} n^{\frac{iy}{\log x}} \mathfrak h(y)
        \mathfrak h(y+u) dy,
        \quad \kappa_n(a) = \hat{\mathfrak h}(-a) \hat{\mathfrak h}\bigg(a-\frac{\log n}{2\pi\log x}\bigg) .
    \end{equation}
    The claim then follows from an application of Conjecture \ref{TPCC}. Indeed the resulting main term is real (and then disappears when we take the imaginary part), and the error term contributes $\ll 1/\log x$.
\end{proof}

\section{The contribution from three zeros}\label{SecFinalMainEstimates}

\begin{proposition}
    Assume RH, Conjecture \ref{PCC}, and Conjecture \ref{TCC2}. Then, uniformly for $2\leq x\leq T$, we have
    \begin{equation}\notag
        \frac{1}{T} \int_T^{2 T} Z(t)^3 \, dt 
        = - \frac{\pi^2}{4} 
        - \frac{3}{2} \int_{0}^\beta \bigg( \frac{g(\frac{b}{\beta})}{\beta} -\frac{1}{b}\bigg) \mathcal L(b) db + O\bigg(\frac{1}{\log x}\bigg)
    \end{equation}
    where $g$ is defined in \eqref{DefAuxFunctions} and $\mathcal L$ in \eqref{LDef}.
\end{proposition}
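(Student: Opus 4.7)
The plan is to expand $Z(t)^3$ according to its two defining pieces and show that, after applying Conjectures \ref{PCC} and \ref{TCC2}, the four resulting contributions combine in such a way that only the purely triple part $H_*$ of the triple correlation survives; this remaining integral must then be evaluated explicitly.

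Introduce the shorthand $A := \hat h(0)/(2\pi\log x)$, $\beta := \log x/\log T$, and $\Sigma(t) := \sum_\gamma h((\gamma-t)\log x)$, so that $Z(t) = -\Sigma(t) + A\log(t/2\pi)$ and
\begin{equation*}
Z(t)^3 = -\Sigma(t)^3 + 3A\,\Sigma(t)^2\log\tfrac{t}{2\pi} - 3A^2\,\Sigma(t)\log^2\tfrac{t}{2\pi} + A^3\log^3\tfrac{t}{2\pi}.
\end{equation*}
The two simpler terms $A^3\log^3$ and $A^2\Sigma\log^2$ are handled using Lemma \ref{CosLogInt} and Lemma \ref{6may.2}\eqref{a}, respectively, combined with the Riemann--von Mangoldt formula after the substitution $y = (\gamma-t)\log x$; they contribute $\hat h(0)^3/(2\pi\beta)^3$ and $-3\hat h(0)^3/(2\pi\beta)^3$ to $\frac{1}{T}\int_T^{2T}Z(t)^3\, dt$, modulo $O(1/\log x)$. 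The $\Sigma^2\log$ piece, after introducing a smooth proxy $\varphi(t)\simeq\log(t/2\pi)/\log T$, applying Lemma \ref{6may.2}\eqref{b}, and substituting $y=(\gamma-t)\log x$ as in the proof of the $PZ^2$ proposition, reduces to the pair correlation sum associated with the kernel $k(\alpha) := \hat h(\alpha)^2$; Conjecture \ref{PCC} with $\hat r(\alpha) = \frac{1}{2\pi\beta}\hat h(\alpha/(2\pi\beta))^2$ then evaluates it, yielding $\frac{3\hat h(0)^3}{(2\pi\beta)^3} + \frac{3\hat h(0)}{(2\pi\beta)^3}\int_{\mathbb R}\hat h(\alpha/(2\pi\beta))^2\min\{|\alpha|,1\}\, d\alpha + O(1/\log x)$.

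For the triple-zero piece $-\frac{1}{T}\int_T^{2T}\Sigma(t)^3\, dt$ I would apply Lemma \ref{6may.2}\eqref{c} and substitute $w = (\gamma-t)\log x$ to rewrite the triple sum using the kernel $\phi(u,v) := \int_{\mathbb R} h(w)h(w+u)h(w+v)\, dw$, whose Fourier transform equals $\hat\phi(\alpha_1,\alpha_2) = \hat h(\alpha_1)\hat h(\alpha_2)\hat h(-\alpha_1-\alpha_2)$. Setting $r(u,v) := \phi(-2\pi\beta u, -2\pi\beta v)$, so that $\hat r(a,b) = \frac{1}{(2\pi\beta)^2}\hat h(\frac{a}{2\pi\beta})\hat h(\frac{b}{2\pi\beta})\hat h(\frac{a+b}{2\pi\beta})$, Conjecture \ref{TCC2} produces $-\frac{1}{2\pi\beta}\iint_{\mathbb R^2}\hat r(a,b)(H_\delta(a,b)+H_*(a,b))\, da\, db + O(1/\log x)$. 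The invariance of $\hat r$ under permutations of the triple $(a,b,-a-b)$ implies $\hat r(0,b) = \hat r(b,0) = \hat r(b,-b) = \frac{\hat h(0)}{(2\pi\beta)^2}\hat h(b/(2\pi\beta))^2$, so the four $H_\delta$-terms contribute $-\hat h(0)^3/(2\pi\beta)^3$ from $\delta(a)\delta(b)$ and $-3\cdot\frac{\hat h(0)}{(2\pi\beta)^3}\int_{\mathbb R}\hat h(\alpha/(2\pi\beta))^2\min\{|\alpha|,1\}\, d\alpha$ from the three $\delta\cdot\min$ terms. Summing all four pieces of $Z^3$, the coefficient of $\hat h(0)^3/(2\pi\beta)^3$ becomes $1-3+3-1=0$ and that of the singular integral becomes $3-3=0$, so everything except the $H_*$-contribution cancels:
\begin{equation*}
    \frac{1}{T}\int_T^{2T}Z(t)^3\, dt = -\frac{1}{2\pi\beta}\iint_{\mathbb R^2}\hat r(a,b)H_*(a,b)\, da\, db + O(1/\log x).
\end{equation*}

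The principal obstacle is the explicit evaluation of this remaining integral. After the change of variables $a = 2\pi\beta u$, $b = 2\pi\beta v$ it becomes $-\frac{1}{2\pi\beta}\iint\hat h(u)\hat h(v)\hat h(u+v)H_*(2\pi\beta u,2\pi\beta v)\, du\, dv$, with $\hat h$ given piecewise by \eqref{PropertiesAuxiliaryFunctions}. Splitting $H_*= 2G +\min\{|a|,1\}+\min\{|b|,1\}+\min\{|a+b|,1\}-2$ and exploiting the symmetry among $(u,v,u+v)$, each of the three linear $\min$-pieces (after being paired with parts of the constant $-2$ tail to produce integrable combinations) should evaluate to $-\frac{1}{2}\int_0^\beta(g(b/\beta)/\beta - 1/b)\mathcal L(b)\, db$, reproducing the $-\frac{3}{2}\int$ term in the statement. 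Finally, the $2G$ piece, supported on a compact hexagon around the origin, should reduce via the identity $g(u) = (1-f(u))/u$ and explicit one-variable integrals to the random-matrix constant $c_Z = \frac{1}{8}M_N'''(0) = -\pi^2/4$, completing the argument.
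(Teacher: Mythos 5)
Your decomposition of $Z(t)^3$ into the four terms, the treatment of the three easier pieces via Lemma \ref{CosLogInt}, Lemma \ref{6may.2} and Conjecture \ref{PCC}, the application of Conjecture \ref{TCC2} with the kernel $\hat r(a,b)=\frac{1}{(2\pi\beta)^2}\hat h(\frac{a}{2\pi\beta})\hat h(\frac{b}{2\pi\beta})\hat h(\frac{a+b}{2\pi\beta})$, and the resulting cancellation of the $\hat h(0)^3/(2\pi\beta)^3$ terms and of the $\min\{|a|,1\}$ integrals against the $H_\delta$ contributions all match the paper's argument exactly and are correct. The genuine gap is the step you yourself flag as the principal obstacle: the evaluation of $-\frac{1}{2\pi\beta}\iint\hat r\,H_*$ is not carried out, and the decomposition you propose for it does not work as stated. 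Splitting $H_*=2G+\min\{|a|,1\}+\min\{|b|,1\}+\min\{|a+b|,1\}-2$ and assigning $-\pi^2/4$ to the $2G$ piece cannot be right: restricted to the first quadrant, $2G=2(1-a-b)^+$ is supported on the triangle $a+b<1$, where away from the axes $\hat h(\cdot/(2\pi\beta))=\pi\beta/|\cdot|$, so that piece contains $\tfrac32\iint_{a,b>\beta,\,a+b<1}\frac{1-a-b}{ab(a+b)}\,da\,db\asymp\log(1/\beta)$, which is unbounded as $\beta\to0$ and hence not a constant; the compensating logarithms sit in the three $\min$-pieces, so the individual attributions ($2G\mapsto -\pi^2/4$, each $\min$-piece $\mapsto-\frac12\int_0^\beta(\frac{g(b/\beta)}{\beta}-\frac1b)\mathcal L(b)\,db$) are false even though their sum is the correct total.

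The paper's evaluation avoids this by never separating $2G$ from the linear terms. One uses the threefold symmetry of $\hat r$ and of $H_*$ under $(a,b)\mapsto(-a,-b)$ and $(a,b)\mapsto(-a-b,b)$ to reduce to the first quadrant, where $H_*(a,b)$ vanishes identically for $a+b<1$ and equals $\min\{a,1\}+\min\{b,1\}-1$ for $a+b>1$. Because the integrand then lives away from the origin, one may replace each $\hat h(\cdot/(2\pi\beta))$ by $\pi\beta/|\cdot|$ at the cost of explicit corrections only on the thin strips $R_a=\{0<b<\beta,\ a>1-b\}$ and its mirror. The main term gives $\frac34\iint_{(0,\infty)^2}\frac{H_*(a,b)}{ab(a+b)}\,da\,db=\frac34\cdot\frac{\pi^2}{3}=\frac{\pi^2}{4}$, and the strip corrections give $\frac32\int_0^\beta\bigl(\frac{g(b/\beta)}{\beta}-\frac1b\bigr)\int_{1-b}^\infty\frac{b+\min\{a,1\}-1}{a(a+b)}\,da\,db$, with the inner integral equal to $\mathcal L(b)$. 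You would need to supply a computation of this kind (or a corrected version of your splitting with the logarithmic pieces tracked explicitly) for the proof to be complete.
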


\begin{proof}
With our now standard procedure, in view of Proposition \ref{P&Z} we write
    \begin{equation}\label{21may.00}
        \frac{1}{T} \int_T^{2 T} Z(t)^3 dt 
        = \mathcal I_{\hat h^3} - 3\mathcal I_{\hat h^2\gamma} + 3\mathcal I_{\hat h\gamma^2} - \mathcal I_{\gamma^3}
    \end{equation}
    with
    \begin{align}\label{21may.01}
        \mathcal I_{\hat h^3}
        &=  \frac{\hat h(0)^3}{(2\pi\beta)^3}\frac{1}{T} \int_T^{2T} \bigg(\frac{\log\frac{t}{2\pi}}{\log T}\bigg)^3 dt
        = \frac{\hat h(0)^3}{(2\pi\beta)^3}+ O\bigg(\frac{1}{\log T}\bigg)\\ \notag
        \mathcal I_{\hat h^2\gamma}
        &=  \frac{\hat h(0)^2}{(2\pi\beta)^2}\frac{1}{T} \int_T^{2T} \bigg(\frac{\log\frac{t}{2\pi}}{\log T}\bigg)^2 \sum_\gamma h((\gamma-t)\log x) dt\\ \notag
        \mathcal I_{\hat h\gamma^2}
        &=  \frac{\hat h(0)}{2\pi\beta}\frac{1}{T} \int_T^{2T} \frac{\log\frac{t}{2\pi}}{\log T} \bigg(\sum_\gamma h((\gamma-t)\log x)\bigg)^2 dt\\ \notag
        \mathcal I_{\gamma^3}
        &= \frac{1}{T} \int_T^{2T} \bigg(\sum_\gamma h((\gamma-t)\log x)\bigg)^3 dt.
    \end{align}
We start by $\mathcal I_{\hat h\gamma}$, for which we use the trivial estimate $\log\frac{t}{2\pi}=\log T+O(1)$, for $T\leq t\leq 2T$. Applying Lemma \ref{6may.2} part \eqref{a} with $\phi=h$ and $\phi=|h|$, we have
    \begin{equation}\notag
        \mathcal I_{\hat h^2\gamma}
        = \frac{\hat h(0)^2}{(2\pi\beta)^2}\frac{1}{T} \int_{\mathbb R} \bigg(1+O\bigg(\frac{1}{\log T}\bigg)\bigg) \sum_{T\leq\gamma\leq 2T} h((\gamma-t)\log x) dt + O\bigg(\frac{\log T}{\sqrt{T}}\bigg) .
    \end{equation}
    The first error term above contributes
    \begin{equation}\notag
        \ll \frac{1}{T\log T} \int_{\mathbb R} \sum_{T\leq\gamma\leq 2T} |h((\gamma-t)\log x)| dt
        \ll \frac{1}{T\log T} \int_{\mathbb R} \sum_{T\leq\gamma\leq 2T} |h(y)| \frac{dy}{\log x}
        \ll \frac{1}{\log x}.
    \end{equation}
    since $h\in L^1(\mathbb R)$. Therefore, by the change of variable $(\gamma-t)\log x=y$ and an application of the Riemann–von Mangoldt formula, we get
    \begin{equation}\label{21may.02}
        \mathcal I_{\hat h^2\gamma}
        = \frac{\hat h(0)^2}{(2\pi\beta)^2}\frac{\log T}{2\pi\log x} \int_{\mathbb R} h(y) dy
        + O\bigg(\frac{1}{\log x}\bigg) + O\bigg(\frac{\log T}{\sqrt{T}}\bigg)\\
        = \frac{\hat h(0)^3}{(2\pi\beta)^3}
        + O\bigg(\frac{1}{\log x}\bigg).
    \end{equation}
    Arguing similarly, by Lemma \ref{6may.2} part \eqref{b}, one gets
    \begin{equation}\begin{split}\notag
       \mathcal I_{\hat h\gamma^2}
       = &\frac{\hat h(0)}{2\pi\beta}\frac{1}{T} \int_{\mathbb R} h(y)  \sum_{T\leq\gamma,\gamma'\leq 2T} h(y+(\gamma'-\gamma)\log x) \frac{dy}{\log x} \\
       &+ O\bigg(\frac{1}{T\log x} \int_{\mathbb R} |h(y)| \sum_{T\leq\gamma,\gamma'\leq 2T} |h(y+(\gamma'-\gamma)\log x)| \frac{dy}{\log x} \bigg)
       +O\bigg(\frac{(\log T)^2}{\sqrt{T}}\bigg).
    \end{split}\end{equation}
    To write the main term of $\mathcal I_{\hat h\gamma^2}$ more concisely, we denote
    $$\hat k(u) = \int_{\mathbb R} h(y)h(y+u)dy, 
    \quad k(a) = \hat h(a)\hat h(-a) = \hat h(a)^2 $$
    since $h$ is even. Similarly, to deal with the first error term we introduce 
    $$\widehat{k_*}(u) = \int_{\mathbb R} |h(y)||h(y+u)|dy, 
    \quad k_*(a) = \widehat{|h|}(a)\widehat{|h|}(-a) = \widehat{|h|}(a)^2 .$$
    In these notations, we have
    \begin{equation}\begin{split}\notag
       \mathcal I_{\hat h\gamma^2}
       = &\frac{\hat h(0)}{2\pi\beta}\frac{1}{T\log x} \sum_{T\leq\gamma,\gamma'\leq 2T} \hat k((\gamma'-\gamma)\log x)\\
       &+ O\bigg(\frac{1}{T(\log x)^2}  \sum_{T\leq\gamma,\gamma'\leq 2T} \widehat{k_*}((\gamma'-\gamma)\log x) \bigg)
       +O\bigg(\frac{(\log T)^2}{\sqrt{T}}\bigg).
    \end{split}\end{equation}
    By applying of Conjecture \ref{PCC} with $r(u)=\hat k(2\pi\beta u)$, $\hat r(a)=\frac{1}{2\pi\beta}k(\frac{-a}{2\pi\beta})$ and with $r(u)=\widehat{k_*}(2\pi\beta u)$, $\hat r(a)=\frac{1}{2\pi\beta}k_*(\frac{-a}{2\pi\beta})$, we obtain
    \begin{equation}\begin{split}\label{21may.03}
       \mathcal I_{\hat h\gamma^2}
       &= \frac{\hat h(0)}{(2\pi\beta)^3}
       \int_{\mathbb R} k\bigg(\frac{-a}{2\pi\beta}\bigg) \bigg(\delta(a) + \min\{|a|,1\}\bigg) da
       +O\bigg(\frac{1}{\log x}\bigg)\\
       &= \frac{\hat h(0)^3}{(2\pi\beta)^3}
       +\frac{\hat h(0)}{(2\pi\beta)^3}
       \int_{\mathbb R} \hat h\bigg(\frac{a}{2\pi\beta}\bigg)^2 \min\{|a|,1\} da
       +O\bigg(\frac{1}{\log x}\bigg).
    \end{split}\end{equation} 
    Finally, by assuming Conjecture \ref{TCC2} we deal with $\mathcal I_{\gamma^3}$. Denote
    $$\hat k(u,v) = \int_{\mathbb R} h(y)h(y+u)h(y+v)dy, \quad 
    k(a,b) = \hat h(-a) \hat h(-b) \hat h(a+b). $$
    By Lemma \ref{6may.2} part \eqref{c} and the usual manipulations, we write
    \begin{equation}\notag
        \mathcal I_{\gamma^3} 
        = \frac{1}{T\log x}\sum_{T\leq \gamma,\gamma',\gamma''\leq 2T} \hat k((\gamma'-\gamma)\log x, (\gamma''-\gamma)\log x) 
        + O\bigg(\frac{(\log T)^3}{\sqrt{T}}\bigg)
    \end{equation}
    Now we apply Conjecture \ref{TCC2} with $r(u,v) = \hat k(2\pi\beta u,2\pi\beta v)$, $\hat r(a,b) =\frac{1}{(2\pi\beta)^2}k(\frac{-a}{2\pi\beta},\frac{-b}{2\pi\beta})$, we have
    \begin{equation}\begin{split}\notag
        \mathcal I_{\gamma^3} 
        &= \frac{1}{(2\pi\beta)^3} \int_{\mathbb R}\int_{\mathbb R} k\bigg(\frac{-a}{2\pi\beta},\frac{-b}{2\pi\beta}\bigg) H(a,b) \, da \,db
        + O\bigg(\frac{1}{\log x}\bigg).
    \end{split}\end{equation}
    According to the notations used in \eqref{6may.20}, we write
    \begin{equation}\label{21may.04}
        \mathcal I_{\gamma^3} = \mathcal I_{\gamma^3,\delta} + \mathcal I_{\gamma^3,*} + O\bigg(\frac{1}{\log x}\bigg)
    \end{equation}
    with
    \begin{equation}\notag
        \mathcal I_{\gamma^3,\delta} 
        = \frac{1}{(2\pi\beta)^3} \int_{\mathbb R}\int_{\mathbb R} \hat h\bigg(\frac{a}{2\pi\beta}\bigg) \hat h\bigg(\frac{b}{2\pi\beta}\bigg) \hat h\bigg(\frac{a+b}{2\pi\beta}\bigg) H_{\delta}(a,b) \, da \,db 
    \end{equation}
    and
    \begin{equation}\begin{split}\notag
        \mathcal I_{\gamma^3,*} 
        = \frac{1}{(2\pi\beta)^3} \int_{\mathbb R}\int_{\mathbb R} \hat h\bigg(\frac{a}{2\pi\beta}\bigg) \hat h\bigg(\frac{b}{2\pi\beta}\bigg) \hat h\bigg(\frac{a+b}{2\pi\beta}\bigg) H_*(a,b) \, da \,db.
    \end{split}\end{equation}
    Evaluating the integrals involving Dirac delta functions and using the parity of $h$, one easily sees that
    \begin{equation}\label{21may.05}
        \mathcal I_{\gamma^3,\delta} 
        = \frac{\hat h(0)^3}{(2\pi\beta)^3} + \frac{3\hat h(0)}{(2\pi\beta)^3} \int_{\mathbb R} \hat h \bigg(\frac{a}{2\pi\beta}\bigg)^2 \min\{|a|,1\}da.
    \end{equation}
    Plugging Equations \eqref{21may.01}-\eqref{21may.05} into \eqref{21may.00}, we notice that second term in above expression cancels the second term on the second line of \eqref{21may.03}. Similarly, all the terms $\frac{\hat h(0)^3}{(2\pi\beta)^3}$ cancel. Namely, we obtain
    \begin{equation}\label{21may.06}
        \frac{1}{T} \int_T^{2 T} Z(t)^3 dt 
        = - \mathcal I_{\gamma^3,*} +  O\bigg(\frac{1}{\log x}\bigg) .
    \end{equation}
    Evaluating $\mathcal I_{\gamma^3,*}$ is the last step of the proof. For starters, we note that by the various symmetries of the functions involved, one sees that 
    \begin{equation}\label{4sep.1}
        \mathcal I_{\gamma^3,*} 
        = \frac{6}{(2\pi\beta)^3} \int_0^\infty \int_0^\infty \hat h\bigg(\frac{a}{2\pi\beta}\bigg) \hat h\bigg(\frac{b}{2\pi\beta}\bigg)  \hat h\bigg(\frac{a+b}{2\pi\beta}\bigg) H_*(a,b) db\,da. 
    \end{equation}
    To prove \eqref{4sep.1}, it suffices to split the plane into the six regions delimited by the axes and the bisector of second and fourth quadrant. Since the integrand function in the definition of $\mathcal I_{\gamma^3,*}$ is invariant under the transformations $(a,b)\to(-a,-b)$, $(a,b)\to(-a-b,b)$, and $(a,b)\to(-a-b,b)$, the integrals over each region equals (say) the integral over the first quadrant, and \eqref{4sep.1} is then verified.
    Now we note that in the first quadrant $H_*(a,b)=0$ if $a+b<1$; in the complementary sub-region $a+b>1$ we have $G(a,b)=0$ and then $H_*(a,b)=\min\{a,1\} + \min\{b, 1\} -1$. With \eqref{PropertiesAuxiliaryFunctions} in mind, we integrate the function $\frac{\pi\beta}{\cdot}$ instead of $\hat h(\frac{\cdot}{2\pi\beta})$ and then we correct this discrepancy. The correction terms involve integrals over $R_a=\{(a,b): 0<b<\beta \text{ and } a>1-b \}$ and $R_b=\{(a,b): 0<a<\beta \text{ and } b>1-a\}$. Namely:
    \begin{equation}\begin{split}\notag
        \mathcal I_{\gamma^3,*} 
        & = \frac{6}{(2\pi\beta)^3} \int_0^\infty \int_0^\infty \frac{\pi\beta}{a} \frac{\pi\beta}{b} \frac{\pi\beta}{a+b}  H_*(a,b) db\,da\\
        &\quad+ \frac{6}{(2\pi\beta)^3} \iint_{R_a} \frac{\pi\beta}{a} \bigg(\pi g\bigg(\frac{b}{\beta}\bigg) -\frac{\pi\beta}{b}\bigg) \frac{\pi\beta}{a+b} H_*(a,b) db\,da\\
        &\quad + \frac{6}{(2\pi\beta)^3} \iint_{R_b}  \bigg(\pi g\bigg(\frac{a}{\beta}\bigg) -\frac{\pi\beta}{a}\bigg) \frac{\pi\beta}{b} \frac{\pi\beta}{a+b} H_*(a,b) db\,da.
    \end{split}\end{equation}
    By symmetry, the integrals over $R_a$ and $R_b$ are the same, so
    \begin{equation}\begin{split}\label{21may.07}
        \mathcal I_{\gamma^3,*} 
        & = \frac{3}{4} \int_0^\infty \int_0^\infty \frac{H_*(a,b)}{ab(a+b)} db\,da
        + \frac{3}{2} \iint_{R_a} \bigg( \frac{g(\frac{b}{\beta})}{\beta} -\frac{1}{b}\bigg) \frac{H_*(a,b)}{a(a+b)}  db\,da.
    \end{split}\end{equation}
    The first integral is straightforward:
    \begin{equation}\begin{split}\notag
        \int_0^\infty \int_0^\infty \frac{H_*(a,b)}{ab(a+b)} db\,da = \frac{\pi^2}{3}.
    \end{split}\end{equation}
    For the second integral in \eqref{21may.07}, we notice that on $R_a$ we have $H_*(a,b) = b + \min\{a,1\} -1$, and we get
    \begin{equation}\begin{split}\notag
        \mathcal I_{\gamma^3,*} 
        & = \frac{\pi^2}{4} 
        + \frac{3}{2} \int_{0}^\beta \bigg( \frac{g(\frac{b}{\beta})}{\beta} -\frac{1}{b}\bigg) \int_{1-b}^{\infty} \frac{b+\min\{a,1\}-1}{a(a+b)}  da\,db\\
        & = \frac{\pi^2}{4} 
        + \frac{3}{2} \int_{0}^\beta \bigg( \frac{g(\frac{b}{\beta})}{\beta} -\frac{1}{b}\bigg) \mathcal L(b) db.
    \end{split}\end{equation}
    Plugging the above into \eqref{21may.06}, we conclude the proof.
\end{proof}

\begin{proposition}
    Assume RH and Conjecture \ref{TCC2}. Then, uniformly for $2\leq x\leq T$, we have
    \begin{equation}\notag
        \frac{1}{T} \int_T^{2 T} \mathcal Z(t)^3 \, dt  \ll \frac{1}{\log x}.
    \end{equation}
\end{proposition}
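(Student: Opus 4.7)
The plan is to mirror the treatment of $\mathcal I_{\gamma^3}$ in the previous proposition, with substantial simplification arising from the fact that $\mathcal Z$ has no counterpart to the $\hat h(0)\log(t/2\pi)/(2\pi \log x)$ term in $Z$, since $\mathfrak h$ is odd. First, I would apply Lemma \ref{6may.2} part \eqref{c} with $\phi = \mathfrak h$ to truncate the three sums over zeros to $[T,2T]$ and extend the outer integration to $\mathbb R$. After the substitution $y = (t-\gamma)\log x$, this produces
\[
\frac{1}{T}\int_T^{2T} \mathcal Z(t)^3\, dt = \frac{1}{T\log x}\sum_{T\leq \gamma,\gamma',\gamma''\leq 2T} \hat\kappa((\gamma'-\gamma)\log x, (\gamma''-\gamma)\log x) + O\bigg(\frac{(\log T)^3}{\sqrt T}\bigg),
\]
where $\hat\kappa(u,v) := \int_{\mathbb R} \mathfrak h(y)\mathfrak h(y+u)\mathfrak h(y+v)\,dy$. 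A direct calculation shows that the Fourier transform of $\hat\kappa$ is, up to a sign, $\kappa(a,b) = \hat{\mathfrak h}(-a)\hat{\mathfrak h}(-b)\hat{\mathfrak h}(a+b)$.

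Next I would apply Conjecture \ref{TCC2} with $r(u,v) = \hat\kappa(2\pi\beta u, 2\pi\beta v)$, following the same scaling as in the previous proposition. This yields a main term of the form
\[
\frac{1}{(2\pi\beta)^3} \iint_{\mathbb R^2} \kappa\bigg(\frac{-a}{2\pi\beta},\frac{-b}{2\pi\beta}\bigg) H(a,b)\,da\,db
\]
with an error $O(1/\log x)$, and it remains to show that the double integral vanishes identically.

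The key structural observation is that $\mathfrak h$ is odd, hence $\hat{\mathfrak h}$ is odd (and purely imaginary), and in particular $\hat{\mathfrak h}(0) = \int_{\mathbb R}\mathfrak h(y)\,dy = 0$. For the $H_\delta$ part of Conjecture \ref{TCC2}, each of the four summands places a Dirac mass at $a=0$, $b=0$, or $a+b=0$; evaluating the product $\hat{\mathfrak h}(a/(2\pi\beta))\hat{\mathfrak h}(b/(2\pi\beta))\hat{\mathfrak h}((a+b)/(2\pi\beta))$ at any such locus produces a factor of $\hat{\mathfrak h}(0) = 0$, so all four contributions vanish. For the $H_*$ part, one checks from its defining formula that $H_*(-a,-b) = H_*(a,b)$, whereas the product of three odd factors $\hat{\mathfrak h}$ picks up a sign $(-1)^3 = -1$ under $(a,b)\to(-a,-b)$. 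The integral over $\mathbb R^2$ of an odd function against an even function vanishes, so the $H_*$ contribution is also zero.

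I do not anticipate any serious obstacle: the cancellation is purely structural and follows from parity, in complete analogy with the mechanism used for $\mathcal P^3$, $\mathcal P^2\mathcal Z$, and $\mathcal P\mathcal Z^2$ in the preceding propositions on the imaginary part. The only mild bookkeeping concerns verifying that $\mathfrak h \in L^p(\mathbb R)$ for $p \in [1,\infty)$, which follows from the estimate $\mathfrak h(u) \ll |u|^{-2}$ at infinity, so that Lemma \ref{6may.2} applies, and that the scaled versions of $\hat\kappa$ satisfy the hypotheses imposed on $r$ in Conjecture \ref{TCC2}.
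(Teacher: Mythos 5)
Your argument is correct and follows essentially the same route as the paper: reduce via Lemma \ref{6may.2}(c) to a triple sum over zeros against $\hat\kappa$, apply Conjecture \ref{TCC2}, and kill the main term by parity, since $H(a,b)=H(-a,-b)$ while the product of three odd factors $\hat{\mathfrak h}$ changes sign under $(a,b)\mapsto(-a,-b)$. Your extra observation that the $H_\delta$ part already vanishes because $\hat{\mathfrak h}(0)=0$ is a harmless refinement of the same symmetry argument.
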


\begin{proof}
    We write
    \begin{equation}\notag
        \frac{1}{T} \int_T^{2 T} \mathcal Z(t)^3 \, dt  
        =  \frac{1}{T\log x} \sum_{T\leq \gamma,\gamma',\gamma''\leq 2T} \hat\kappa((\gamma-\gamma')\log x,(\gamma-\gamma'')\log x)
        + O(T^{-1/4}),
    \end{equation}
    with
    $$\hat\kappa(u,v) = \int_{\mathbb R}
    \mathfrak h(y) 
    \mathfrak h(y+u) 
    \mathfrak h(y+v) dy,
    \quad \kappa(a,b) = \hat{\mathfrak h}(-a) \hat{\mathfrak h}(-b) \hat{\mathfrak h}(a+b) .$$
    We apply Conjecture \ref{TCC2} and get
    \begin{equation}\notag
        \frac{1}{T} \int_T^{2 T} \mathcal Z(t)^3 \, dt  
        =  \frac{1}{(2\pi\beta)^3} \int_{\mathbb R} \int_{\mathbb R} \kappa\bigg(\frac{a}{2\pi\beta},\frac{b}{2\pi\beta}\bigg) H(a,b) da\,db
        + O\bigg(\frac{1}{\log x}\bigg).
    \end{equation}
    The main term above is 0 because of the symmetry of the integrand. Indeed, 
    $$H(a,b) = H(-a-b)$$
    and (since $\mathfrak h$ is odd)
    $$ \kappa\bigg(\frac{a}{2\pi\beta},\frac{b}{2\pi\beta}\bigg)
    = - \kappa\bigg(\frac{-a}{2\pi\beta},\frac{- b}{2\pi\beta}\bigg).$$
\end{proof}

\section{Proof of Proposition \ref{STPCCimpliesTPCC}}\label{SecSTPCCimpTCC}

To prove the result, we convolve the twisted pair correlation function $F_n$ with an appropriate kernel. More precisely, let $g\in L^1(\mathbb R)$ be such that $\hat g\in L^1(\mathbb R)$ is Lipschitz continuous and $\hat g(\alpha)\ll |\alpha|^{-3}$ as $|\alpha|\to\infty$. We multiply $F_n(\alpha)$ times $\hat g(\alpha)$, and then integrate over $\alpha$, obtaining
\begin{equation}\label{PROP.1}
    \int_{\mathbb R} \hat g(\alpha) F_n(\alpha) d\alpha 
    = -\bigg(\frac{T}{2\pi}\frac{\Lambda(n)}{\sqrt{n}} \bigg)^{-1}
    \sum_{T\leq\gamma,\gamma'\leq 2T} n^{i\gamma} \omega(\gamma-\gamma') g\bigg(\frac{\log T}{2\pi} (\gamma-\gamma') \bigg).
\end{equation}
First, we analyze the left hand side of \eqref{PROP.1}. We split the range of integration, and write 
\begin{equation}\label{PROP.2}
    \int_{\mathbb R} \hat g(\alpha) F_n(\alpha) d\alpha 
    = \bigg[ \int_{-\infty}^{-1} + \int_{-1}^{-\frac{\log n}{\log T}} + \int_{-\frac{\log n}{\log T}}^{0} + \int_{0}^{1-\frac{\log n}{\log T}} + \int_{1-\frac{\log n}{\log T}}^{+\infty} \bigg] \hat g(\alpha) F_n(\alpha) d\alpha.
\end{equation}
The five integrals above can be computed by applying Conjecture \ref{STPCC}. 
Since $\hat g \in L^1(\mathbb R)$, we immediately have
\begin{equation}\label{PROP.3}
    \int_{1-\frac{\log n}{\log T}}^{+\infty} \hat g(\alpha) F_n(\alpha) d\alpha 
    = \int_{1-\frac{\log n}{\log T}}^{+\infty} \hat g(\alpha) \min\bigg\{ 1, \frac{\log T}{\Lambda(n)}\bigg( \alpha - 1 + \frac{\log n}{\log T} \bigg) \bigg\} d\alpha 
    + O\bigg(\frac{1}{\log T}\bigg)
\end{equation}
and, recalling $F_n(\alpha)= F_n(-\alpha-\frac{\log n}{\log T})$,
\begin{equation}\begin{split}\label{PROP.7}
    \int_{-\infty}^{-1} \hat g&(\alpha) F_n(\alpha) d\alpha 
    =\int_{1-\frac{\log n}{\log T}}^{+\infty} \hat g\bigg(-\alpha-\frac{\log n}{\log T}\bigg) F_n(\alpha) d\alpha \\
    &= \int_{1-\frac{\log n}{\log T}}^{+\infty} \hat g\bigg(-\alpha-\frac{\log n}{\log T}\bigg) \min\bigg\{ 1, \frac{\log T}{\Lambda(n)}\bigg( \alpha - 1 + \frac{\log n}{\log T} \bigg) \bigg\} d\alpha 
    + O\bigg(\frac{1}{\log T}\bigg).
\end{split}\end{equation}
As for the range $(0,1-\frac{\log n}{\log T})$, an application of Conjecture \ref{STPCC} gives
\begin{equation}\begin{split}\label{PROP.4}
    \int_{0}^{1-\frac{\log n}{\log T}} \hat g(\alpha) F_n(\alpha) d\alpha 
    = \bigg(1+\frac{1}{n^2}&+O\bigg(\frac{1}{\log T}\bigg)\bigg)  \int_{0}^{1-\frac{\log n}{\log T}} \hat g(\alpha) T^{-2\alpha}\log T d\alpha \\
    & - \int_{0}^{1-\frac{\log n}{\log T}} \hat g(\alpha) r_1(\alpha,n) d\alpha 
    +O\bigg(\frac{1}{\log T}\bigg).
\end{split}\end{equation}
The first integral can be evaluated by Taylor expanding $\hat g$ around zero. Doing so, one gets
\begin{equation}\begin{split}\notag
    \int_{0}^{1-\frac{\log n}{\log T}} \hat g(\alpha) T^{-2\alpha}\log T d\alpha
    &= \hat g(0) \int_{0}^{1-\frac{\log n}{\log T}}  T^{-2\alpha}\log T d\alpha
    +O\bigg(\int_{0}^{1-\frac{\log n}{\log T}} \alpha T^{-2\alpha}\log T d\alpha \bigg) \\ 
    & = \frac{\hat g(0)}{2} + O\bigg(\frac{n^2}{T^2}\bigg)
    + O\bigg(\frac{1}{\log T}\bigg)
    = \frac{\hat g(0)}{2} + O\bigg(\frac{1}{\log T}\bigg).
\end{split}\end{equation}
Moreover, the second integral in \eqref{PROP.4} is small. Precisely, for $n=q^a$
\begin{equation}\begin{split}\notag
r_1(\alpha,n)
&= \frac{T^{-2\alpha}}{\Lambda(n)}\sum_{m\leq T^\alpha}m\Lambda(mn)\Lambda(m) 
+ \frac{T^{2\alpha}}{\Lambda(n)} \sum_{m>T^\alpha} \frac{\Lambda(mn)\Lambda(m)}{m^3} \\
&= T^{-2\alpha}\log q \sum_{b\leq \alpha\frac{\log T}{\log q}}q^b 
+ T^{2\alpha}\log q \sum_{b>\alpha\frac{\log T}{\log q}} q^{-3b},
\end{split}\end{equation}
and then
\begin{equation}\begin{split}\notag
    \int_{0}^{1-\frac{\log n}{\log T}} \hat g(\alpha)  r_1(\alpha,n) d\alpha 
    &\ll \log q \sum_{b\leq \frac{\log \frac{T}{n}}{\log q}} q^b \int_{\frac{b\log q}{\log T}}^{1-\frac{\log n}{\log T}} T^{-2\alpha} d\alpha
    + \log q \sum_{b=1}^{\infty} q^{-3b} \int_0^{\frac{b\log q}{\log T}} T^{2\alpha} d\alpha \\
    &\ll \frac{\log q}{\log T}  \sum_{b=1}^{\infty} q^{-b} 
    \ll \frac{\log q}{q\log T}  
    \ll \frac{1}{\log T}.
\end{split}\end{equation}
Therefore, \eqref{PROP.4} reads
\begin{equation}\begin{split}\label{PROP.5}
    \int_{0}^{1-\frac{\log n}{\log T}} \hat g(\alpha) F_n(\alpha) d\alpha 
    &= \frac{\hat g(0)}{2}\bigg(1+\frac{1}{n^2}\bigg)
    + O\bigg(\frac{1}{\log T}\bigg).
\end{split}\end{equation}
Arguing similarly, one has
\begin{equation}\begin{split}\label{PROP.6}
    \int_{-1}^{-\frac{\log n}{\log T}} \hat g(\alpha) F_n(\alpha) d\alpha 
    &= \int_0^{1-\frac{\log n}{\log T}} \hat g\bigg(-\alpha-\frac{\log n}{\log T}\bigg) F_n(\alpha) d\alpha \\
    &= \frac{\hat g(-\frac{\log n}{\log T})}{2}\bigg(1+\frac{1}{n^2}\bigg)
    + O\bigg(\frac{1}{\log T}\bigg).
\end{split}\end{equation}
Finally, we deal with the integral between $-\frac{\log n}{\log T}$ and $0$. Conjecture \ref{STPCC} leads to
\begin{equation}\begin{split}\notag
    \int_{-\frac{\log n}{\log T}}^{0} \hat g(\alpha) F_n(\alpha) d\alpha 
    &= \bigg(1+O\bigg(\frac{1}{\log T}\bigg)\bigg) \int_{-\frac{\log n}{\log T}}^0 \hat g(\alpha) \bigg(T^{2\alpha} + \frac{T^{-2\alpha}}{n^2} \bigg)\log T d\alpha \\
    & \quad -\int_{-\frac{\log n}{\log T}}^{0} \hat g(\alpha) r_2(\alpha,n) d\alpha
    + O\bigg(\frac{1}{\log T}\bigg).
\end{split}\end{equation}
The first term above can be computed by Taylor expanding $\hat g$. Namely, for the $T^{2\alpha}$-term in the first integral, one expands around 0. Similarly, for the $T^{-2\alpha}$-term, it suffices to expand around $-\frac{\log n}{\log T}$. By doing so one gets
\begin{equation}\begin{split}\notag
    \int_{-\frac{\log n}{\log T}}^{0} \hat g(\alpha) F_n(\alpha) d\alpha 
    = &\frac{\hat g(0)+\hat g(-\frac{\log n}{\log T})}{2} \bigg( 1-\frac{1}{n^2} \bigg)
    -\int_{-\frac{\log n}{\log T}}^{0} \hat g(\alpha) r_2(\alpha,n) d\alpha
    + O\bigg(\frac{1}{\log T}\bigg) .
\end{split}\end{equation}
To bound the remaining integral term above, we use a similar strategy as for $r_1(\alpha,n)$. Since $n=q^a$, the factors $\Lambda(m)$ and $\Lambda(n/m)$ force $m$ to be of the form $q^b$ for some $1\leq b\leq a-1$. As a consequence, 
\begin{equation}\begin{split}\notag
r_2(\alpha,n)
&= \frac{\log q}{(nT^\alpha)^2}\sum_{\substack{ 1\leq b <a \\ q^b\leq q^aT^\alpha }} q^{2b} 
+ (nT^{\alpha})^2 \log q \sum_{\substack{ 1\leq b <a \\ q^b> q^aT^\alpha }} q^{-2b}.
\end{split}\end{equation}
Hence, we have
\begin{equation}\begin{split}\notag
&\int_{-\frac{\log n}{\log T}}^{0} \hat g(\alpha) r_2(\alpha,n) d\alpha
= \int_{-\frac{\log n}{\log T}}^{0}\hat g(\alpha) \bigg(\frac{\log q}{(nT^\alpha)^2}\sum_{\substack{ 1\leq b <a \\ q^b\leq q^aT^\alpha }} q^{2b} 
+ (nT^{\alpha})^2 \log q \sum_{\substack{ 1\leq b <a \\ q^b> q^aT^\alpha }} q^{-2b}\bigg) d\alpha\\
&\ll \frac{\log q}{n^2} \sum_{1\leq b<a} q^{2b} \int_{-\frac{(a-b)\log q}{\log T}}^{0} T^{-2\alpha}d\alpha
+ n^2 \log q \sum_{1\leq b<a} q^{-2b} \int_{-\frac{a\log q}{\log T}}^{-\frac{(a-b)\log q}{\log T}} T^{2\alpha}d\alpha\\
&\ll \frac{\log q}{n^2} \sum_{1\leq b<a} q^{2b} \frac{n^2}{q^{2b}\log T}
+ n^2 \log q \sum_{1\leq b<a} q^{-2b} \frac{q^{2b}}{n^2\log T}
\ll \frac{\log q}{\log T} \sum_{1\leq b<a} 1
\ll (a-1)\frac{\log q}{\log T}
\end{split}\end{equation}
when $n=q^a$ with $a\geq 2$. Clearly, $r_2(\alpha,n)=0$ if $n$ is a prime, i.e. if $a=1$.
Therefore,
\begin{equation}\begin{split}\label{PROP.8}
    \int_{-\frac{\log n}{\log T}}^{0} \hat g(\alpha) F_n(\alpha) d\alpha 
    = \frac{\hat g(0)+\hat g(-\frac{\log n}{\log T})}{2} \bigg( 1-\frac{1}{n^2}\bigg) 
    + O\bigg(\frac{1}{\log T}\bigg) 
    + O (\mathcal E_n).
\end{split}\end{equation}
Plugging \eqref{PROP.3}-\eqref{PROP.8} into \eqref{PROP.2}, we obtain
\begin{equation}\begin{split}\notag
    \int_{\mathbb R} \hat g(\alpha) F_n(\alpha) d\alpha 
    =  &\hat g(0)+\hat g(-\tfrac{\log n}{\log T})
    + O (\mathcal E_n) \\
    & + \int_{1-\frac{\log n}{\log T}}^{+\infty} \bigg(\hat g(\alpha) + \hat g(-\alpha-\tfrac{\log n}{\log T}) \bigg) \min\bigg\{ 1, \frac{\log T}{\Lambda(n)}\bigg( \alpha - 1 + \frac{\log n}{\log T} \bigg) \bigg\} d\alpha ,
\end{split}\end{equation}
which in turn implies
\begin{equation}\begin{split}\label{PROP.10}
    \int_{\mathbb R} \hat g(\alpha) F_n(\alpha) d\alpha 
    =  \hat g(0)+\hat g(-\tfrac{\log n}{\log T})  
    + \int_{\mathbb R} &\frac{\hat g(\alpha) + \hat g(-\alpha-\tfrac{\log n}{\log T})}{2} m_n(\alpha)d\alpha 
    + O (\mathcal E_n).
\end{split}\end{equation}

The final step of the proof consists of removing the weight function $\omega$ on the right hand side of \eqref{PROP.1}. We denote $r(x) = \omega(\frac{2\pi x}{\log T}) g (x)$, a Lipschitz continuous integrable function. We want to show that $\hat r$ is approximately $\hat g$. By the convolution theorem and the well-known formula $\hat \omega(\alpha) = 2\pi e^{-4\pi|\alpha|}$, we have
\begin{equation}\begin{split}\notag
    \hat r(\alpha) = \bigg(\frac{ \hat \omega(\frac{\cdot \log T}{2\pi})}{2\pi/\log T} * \hat g \bigg)(\alpha)
    = \int_{\mathbb R} T^{-2|y|}\log T \, \hat g(\alpha-y) dy.
\end{split}\end{equation}
Since $\hat g$ is such that $\hat g'(\alpha)\ll \min\{1,|\alpha|^{-3}\}$, by an integration by parts we obtain
\begin{equation}\begin{split}\label{PROP.11}
    \hat r(\alpha) 
    = \hat g(\alpha) + O\bigg( \int_{\mathbb R} T^{-2|y|} |\hat g'(\alpha-y)|dy \bigg)
    = \hat g(\alpha) + O\bigg( \frac{1}{(1+|\alpha|^3)\log T}\bigg).
\end{split}\end{equation}
Equations \eqref{PROP.1}, \eqref{PROP.10}, and \eqref{PROP.11} yield the claim.

\section{Proof of Proposition \ref{TPC_SmallAlpha}}\label{SecSmallAlpha}

We first state an explicit formula, essentially due to Montgomery \cite{Montgomery1}.

\begin{lemma}\label{ExplicitFormulaRevisited}
    Assume RH. For $y\geq 1$ and $T\leq t \leq 2T$, we have
    \begin{equation}\begin{split}\notag   
        2\sum_{\gamma} \frac{y^{i\gamma}}{1+(t-\gamma)^2}
        = - \frac{1}{\sqrt{y}}  \sum_m \Lambda(m) a_m(y) \bigg(\frac{y}{m}\bigg)^{it} 
        &+ y^{-1+it} \bigg(\log\frac{t}{2\pi} + \frac{\zeta'}{\zeta}(3/2-it)\bigg) \\
        &+ O\bigg(\frac{\sqrt{y}}{T^2}\bigg) + O\bigg(\frac{1}{yT}\bigg)
    \end{split}\end{equation}
    where
    $$a_m(y)= \begin{cases} 
    (\frac{m}{y})^{1/2} & \text{if } m\leq y \\ (\frac{y}{m})^{3/2} & \text{if } m> y.\end{cases}$$
\end{lemma}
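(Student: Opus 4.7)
This is an explicit formula of Montgomery type, and the plan is to evaluate the contour integral
\[ J := \frac{1}{2\pi i}\int_{(c)} \frac{\zeta'}{\zeta}(u)\,\frac{y^{u-1/2}}{(1+s-u)(1-s+u)}\,du \qquad (s = \tfrac{1}{2} + it,\ c > \tfrac{3}{2}) \]
in two complementary ways. The kernel is chosen so that at a non-trivial zero $\rho = 1/2 + i\gamma$ one has $(1+s-\rho)(1-s+\rho) = 1 - (s-\rho)^2 = 1+(t-\gamma)^2$, making the residue of the integrand at $\rho$ equal precisely to $y^{i\gamma}/(1+(t-\gamma)^2)$.

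For the prime side, I would expand $\zeta'/\zeta(u) = -\sum_n \Lambda(n) n^{-u}$ on $\mathrm{Re}\,u=c$ and compute each term via the partial fractions
\[ \frac{1}{(1+s-u)(1-s+u)} = \frac{1}{2}\!\left(\frac{1}{u-(s-1)} - \frac{1}{u-(s+1)}\right), \]
closing the contour to the left if $n<y$ (picking up residues at $u=s\pm 1$) and to the right if $n>y$ (picking up no residues). After regrouping the resulting two pieces using $\sum_n \Lambda(n) n^{-3/2-it} = -\zeta'/\zeta(3/2+it)$ to combine them into a single sum with the weights $a_n(y)$, one obtains
\[ J = -\frac{1}{2\sqrt y}\sum_n \Lambda(n)\,a_n(y)\,(y/n)^{it} - \frac{y^{1+it}}{2}\frac{\zeta'}{\zeta}(3/2+it). \]

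For the zero side, I would shift the contour to $\mathrm{Re}\,u=c'$ for some $c'\in(-2,\tfrac{1}{2})$ and collect residues. The pole at $u=1$ of $\zeta'/\zeta$ contributes $-y^{1/2}/[s(2-s)] = O(\sqrt y / T^2)$; the pole at $u=1+s$ contributes $-\tfrac{1}{2}y^{1+it}\,\zeta'/\zeta(3/2+it)$, which upon rearrangement exactly cancels the auxiliary term in $J$; the pole at $u=s-1$ contributes $\tfrac{1}{2}y^{-1+it}\,\zeta'/\zeta(-1/2+it)$, which under the functional equation $\zeta'/\zeta(s-1) = \chi'/\chi(s-1) - \zeta'/\zeta(2-s)$ together with $\chi'/\chi(s-1) = -\log(t/2\pi) + O(1/t)$ produces the main term $\tfrac{1}{2}y^{-1+it}[\log(t/2\pi) + \zeta'/\zeta(3/2-it)] + O(1/(yT))$; and the residues at non-trivial zeros sum to $\sum_\gamma y^{i\gamma}/(1+(t-\gamma)^2)$. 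Equating the two evaluations of $J$ and multiplying by $2$ yields the stated formula modulo the residual shifted integral $-2J_{c'}$.

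The main obstacle is bounding $J_{c'}$ within the claimed error $O(\sqrt y/T^2) + O(1/(yT))$. My plan is to apply the functional equation on $\mathrm{Re}\,u = c'$: the piece $\zeta'/\zeta(1-u) = -\sum_n \Lambda(n) n^{u-1}$ produces, via $w = 1-u$, contour integrals on $\mathrm{Re}\,w = 1-c' > 1$ whose kernel has all poles to the left of the contour, so that closing to the right gives zero; the remaining $\chi'/\chi(u) = -\log(|\mathrm{Im}\,u|/(2\pi)) + O(|\mathrm{Im}\,u|^{-1})$ piece is estimated using the decay $\asymp (1+(t-\mathrm{Im}\,u)^2)^{-1}$ of the denominator near $\mathrm{Im}\,u = t$. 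Choosing $c'$ appropriately (e.g.\ $c' = -1$), together with the cancellation exploited in Montgomery's original argument for his analogous lemma, should supply the required bound.
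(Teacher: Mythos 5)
Your proposal takes a genuinely different route from the paper. The paper's proof is essentially a citation: it takes $\sigma=\tfrac32$ in Montgomery's Equation (22), bounds two explicit terms trivially, and then converts $-y^{-1+it}\frac{\zeta'}{\zeta}(-\tfrac12+it)$ into the stated main term via the functional equation and Stirling. You instead re-derive Montgomery's formula from scratch by the double evaluation of the contour integral $J$. Your derivation is structurally sound and is in essence Montgomery's original argument: the prime-side computation (closing left for $n<y$, right for $n>y$, and regrouping with $\frac{\zeta'}{\zeta}(\tfrac32+it)$ to produce the weights $a_n(y)$), the residue bookkeeping (the cancellation of the two $\frac{\zeta'}{\zeta}(\tfrac32+it)$ terms, the $O(\sqrt y/T^2)$ from the pole at $u=1$, the $O(1/(yT))$ from Stirling), and the vanishing of the $\zeta'/\zeta(1-u)$ piece of $J_{c'}$ all check out. (Minor point: you need $c'<-\tfrac12$ so that the pole at $u=s-1$ is actually crossed and $c'<0$ so that the Dirichlet series for $\zeta'/\zeta(1-u)$ converges; your choice $c'=-1$ is fine, but part of your stated range $(-2,\tfrac12)$ is not.)

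The one genuine gap is the bound on the $\chi'/\chi$ piece of $J_{c'}$, which you defer to ``the cancellation exploited in Montgomery's original argument.'' This is exactly the step that must be made explicit, because the estimates you describe do not suffice. On $\Re u=-1$ one has $|y^{u-1/2}|=y^{-3/2}$ and $\frac{\chi'}{\chi}(-1+iv)=-\log\frac{|v|}{2\pi}+O((1+|v|)^{-1})$, so an absolute-value bound against the kernel $\asymp(1+|t-v|)^{-2}$ gives only $O(y^{-3/2}\log T)$, which is hopeless. The first necessary observation is that $\frac{1}{2\pi i}\int_{(-1)}\frac{y^{u-1/2}}{(1+s-u)(1-s+u)}\,du=0$ (close to the left; both kernel poles lie to the right), so one may replace $\frac{\chi'}{\chi}(u)$ by $\frac{\chi'}{\chi}(u)+\log\frac{t}{2\pi}$. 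Even after this, bounding $|\log\frac{t}{|v|}|$ in absolute value yields $O(y^{-3/2}T^{-1}\log T)$, which for $1\le y\le(\log T)^2$ still exceeds the claimed $O(1/(yT))$. To reach the stated error one needs a second layer of cancellation: near $v=t$ one writes $\log\frac{t}{v}=-\frac{v-t}{t}+O(\frac{(v-t)^2}{t^2})$ and uses that the linear term integrates against the nearly even kernel to $O(1/t)$ rather than $O(\log t/t)$. Without carrying this out (or simply quoting Montgomery's formula, as the paper does), your argument proves the lemma only with the weaker error term $O(y^{-3/2}T^{-1}\log T)$ in place of $O(1/(yT))$.
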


\begin{proof}
    We take $\sigma=\frac{3}{2}$ in \cite[Equation (22)]{Montgomery1} and bound trivially the last two terms for $T\leq t \leq 2T$: 
    \begin{equation}\begin{split}\notag   
        2\sum_{\gamma} \frac{y^{i\gamma}}{1+(t-\gamma)^2}
        = - \frac{1}{\sqrt{y}}  \sum_m \Lambda(m) a_m(y) \bigg(\frac{y}{m}\bigg)^{it} 
        &- y^{-1+it} \frac{\zeta'}{\zeta}(-1/2+it)\\
        &+ O\bigg(\frac{\sqrt{y}}{T^2}\bigg) + O\bigg(\frac{1}{y^2T}\bigg).
    \end{split}\end{equation}
    We now apply the functional equation in the form
    $$-\frac{\zeta'}{\zeta}(-\frac{1}{2}+it) 
    = \frac{\zeta'}{\zeta}\bigg(\frac{3}{2}-it\bigg) + \frac{1}{2}\frac{\Gamma'}{\Gamma}\bigg(-\frac{1}{4}+\frac{it}{2}\bigg) + \frac{1}{2} \frac{\Gamma'}{\Gamma}\bigg(\frac{3}{4}-\frac{it}{2}\bigg) -\log \pi.$$
    Since Stirling's formula yields
    $$\frac{1}{2}\frac{\Gamma'}{\Gamma}\bigg(-\frac{1}{4}+\frac{it}{2}\bigg) + \frac{1}{2} \frac{\Gamma'}{\Gamma}\bigg(\frac{3}{4}-\frac{it}{2}\bigg) = \frac{1}{2}\log\bigg(\frac{t^2}{4}\bigg) + O\bigg(\frac{1}{t}\bigg),$$
    the proof is concluded.
\end{proof}

We introduce the following notation; for any $\delta>0$, we denote 
$$\mathcal R = 
\frac{1}{\log T} 
+\frac{\sqrt{n}(\log T)^3}{T}
+ \frac{n^2}{T^2} 
+ \frac{n^{1+\delta}(\log T)^2}{T}.$$
Note that $\mathcal R\ll 1/\log T$, if $n\leq T^{1-\varepsilon}$ for some $\varepsilon>\delta$.

\subsection{Positive alpha}

We start by handling the case $0<\alpha<1-\frac{\log n}{\log T}-\delta_T$, where $\delta_T=10\log\log T/\log T$.
In this range, $nT^\alpha$ and $T^{\alpha}$ are both between 1 and $T$. So we can use classical methods to compute averages of short Dirichlet polynomials. 
We recall the integral expression for the function $\omega$, namely:
\begin{equation}\notag
\omega(\gamma-\gamma') = \frac{2}{\pi}\int_{\mathbb R} 
\frac{1}{[1+(t-\gamma)^2][1+(t-\gamma')^2]} dt. 
\end{equation}
Arguing like in \cite[p. 187-188]{Montgomery1} (in the other direction), one can extend the range of summation over zeros and restrict the range of integration in the definition of $F_n(\alpha)$ \eqref{DefinitionOfFn} at a cost of an acceptable error term, getting
\begin{equation}\notag
F_n(\alpha) 
= -\frac{\sqrt{n}}{T\Lambda(n)} \int_{T}^{2T} 
\bigg(2\sum_{\gamma} \frac{(nT^\alpha)^{i\gamma} }{1+(t-\gamma)^2}\bigg)\bigg( 2\sum_{\gamma'}\frac{(T^{-\alpha})^{i\gamma'}}{1+(t-\gamma')^2}\bigg) dt 
+ O\bigg(\frac{\sqrt{n}(\log T)^3}{T}\bigg).
\end{equation}
By applying Lemma \ref{ExplicitFormulaRevisited} for both sums over zeros, we write
\begin{equation}\label{7sep.1}
    F_n(\alpha) 
    = F_n^{1-1}(\alpha) + F_n^{1-2}(\alpha) + F_n^{1-2}(\alpha) + F_n^{2-2}(\alpha) + O(\mathcal R)
\end{equation}
with
\begin{equation}\begin{split}\notag
    F_n^{1-1}(\alpha) 
    &= - \frac{T^{-\alpha}}{\Lambda(n)} \frac{1}{T}\int_T^{2T}
    \sum_m \sum_l \Lambda(m) \Lambda(l) a_m(nT^\alpha) a_l(T^\alpha) \bigg(\frac{nl}{m}\bigg)^{it} dt \\
    F_n^{1-2}(\alpha) 
    &= \frac{T^{-3\alpha/2}}{\Lambda(n)} \frac{1}{T} \int_T^{2T}  \sum_m \Lambda(m) a_m(nT^\alpha) \bigg(\frac{n}{m}\bigg)^{it} \bigg( \log\frac{t}{2\pi} + \frac{\zeta'}{\zeta}(3/2+it)\bigg) dt\\
    F_n^{2-1}(\alpha) 
    &= \frac{T^{-3\alpha/2} }{\sqrt{n}\Lambda(n)} 
    \frac{1}{T} \int_T^{2T} \sum_l \Lambda(l) a_l(T^\alpha) (nl)^{it} \bigg( \log\frac{t}{2\pi} + \frac{\zeta'}{\zeta}(3/2-it)\bigg) dt\\
    F_n^{2-2}(\alpha) 
    &= -\frac{T^{-2\alpha}}{\sqrt{n}\Lambda(n)} \frac{1}{T} \int_T^{2T} n^{it} \bigg| \log\frac{t}{2\pi} + \frac{\zeta'}{\zeta}(3/2-it)\bigg|^2 dt.
\end{split}\end{equation}
To handle the error terms from Lemma \ref{ExplicitFormulaRevisited}, we used the
crude bound $\int_T^{2T}|\sum_m a_mm^{-it}|^2dt \ll \sum_m |a_m|^2(T+m)$
and the prime number theorem to obtain
\begin{equation}\begin{split}\notag
    \int_T^{2T} & \bigg| \sum_m \Lambda(m)a_m(y)\bigg(\frac{y}{m}\bigg)^{it} \bigg| dt 
    \ll \sqrt{T\sum_m \Lambda(m)^2a_m(y)^2(T+m)}
    \ll T\sqrt{y\log y} + y\sqrt{T\log y}. 
\end{split}\end{equation}

First we analyze $F_n^{1-1}(\alpha)$. For starters, we notice that if $nl\neq m$ then
$$\int_T^{2T} \bigg(\frac{nl}{m}\bigg)^{it} dt \ll \frac{1}{|\log\frac{nl}{m}|} \ll \frac{m}{|nl-m|}.$$
Therefore, the \lq\lq off-diagonal\rq\rq contribution of $F_n^{1-1}(\alpha)$ is
$$ \ll \frac{T^{-\alpha}}{\Lambda(n)} \frac{1}{T} \sum_{\substack{m,l \\ m\neq ln}}  \Lambda(m) \Lambda(l) a_m(nT^\alpha) a_l(T^\alpha) \frac{m}{|nl-m|} 
\ll \frac{nT^\alpha(\log T)^3}{T},$$
since
\begin{equation}\label{ClaimDoubleSum}
\sum_{\substack{m,l \\ m\neq ln}}  \Lambda(m) \Lambda(l) a_m(nT^\alpha) a_l(T^\alpha) \frac{m}{|nl-m|}  \ll (\log T)^3 nT^{2\alpha} .
\end{equation}
The proof of \eqref{ClaimDoubleSum} is standard. One splits the double sum into four terms, according to the definition of $a_m(nT^\alpha)$ and $a_l(T^\alpha)$. For each piece, one splits again into two cases: $0<|nl-m|<\frac{nl}{2}$ and $|nl-m|>\frac{nl}{2}$. We show all the details for one of the four terms, as the other three can be handled analogously. For (say) $m\leq nT^\alpha$ and $l\leq T^\alpha$, the left hand side of \eqref{ClaimDoubleSum} is bounded by
$$\ll \frac{(\log T)^2}{\sqrt{n}T^\alpha}\sum_{\substack{l\leq T^\alpha \\ m\leq nT^\alpha \\ m\neq ln}} \frac{m\sqrt{ml}}{|nl-m|}.$$
We now split into two cases. If $|nl-m|=:|h|< \frac{nl}{2}$, then the sum above is
$$\sum_{\substack{l\leq T^\alpha \\ m\leq nT^\alpha \\ 0<|nl-m|<\frac{nl}{2}}} \frac{m^{3/2}l^{1/2}}{|nl-m|} 
\ll \sum_{\substack{l\leq T^\alpha \\ 0<|h|<\frac{nl}{2}}} \frac{(nl\pm h)^{3/2}l^{1/2}}{|h|} 
\ll n^{3/2} \sum_{\substack{l\leq T^\alpha \\ h<\frac{nl}{2}}} \frac{l^2}{|h|} 
\ll n^{3/2} T^{3\alpha} \log T. $$
If instead $|nl-m|> \frac{nl}{2}$ then $|nl-m|\gg nl$, so
$$\sum_{\substack{l\leq T^\alpha \\ m\leq nT^\alpha \\|nl-m|>\frac{nl}{2}}} \frac{m^{3/2}l^{1/2}}{|nl-m|} 
\ll \frac{1}{n}\sum_{\substack{l\leq T^\alpha \\ m\leq nT^\alpha}} \frac{m^{1/2}}{l^{1/2}} 
\ll \frac{(nT^\alpha)^{5/2} (T^{\alpha})^{1/2}}{n}
\ll n^{3/2}T^{3\alpha}, $$
and \eqref{ClaimDoubleSum} is proven in this case.
Hence, since $0<\alpha<1-\frac{\log n}{\log T}-\delta_T$,
\begin{equation}\begin{split}\label{7sep.2}
    F_n^{1-1}(\alpha) 
    &= - \frac{T^{-\alpha}}{\Lambda(n)} 
    \sum_l \Lambda(nl) \Lambda(l) a_{nl}(nT^\alpha) a_l(T^\alpha)  + O(T^{-\delta_T/2})\\
    &= - \frac{1}{\Lambda(n)} 
    \sum_{m} \frac{\Lambda(nm) \Lambda(m)}{m} \min\bigg\{\frac{m}{T^\alpha},\frac{T^\alpha}{m}\bigg\}^2
    + O(T^{-\delta_T/2}).
\end{split}\end{equation}

Now we move to $F_n^{1-2}(\alpha)$. We isolate the term $m=n$, for which $a_n(nT^\alpha)=T^{-\alpha/2}$. Since $\log\frac{t}{2\pi} + \frac{\zeta'}{\zeta}(3/2+it)=\log T+O(1)$ for $T\leq t \leq 2T$, we get
\begin{equation}\begin{split}\label{7sep.3}
    F_n^{1-2}(\alpha) 
    = &T^{-2\alpha}(\log T+O(1)) 
    + O(\mathcal E_1)
    + O(\mathcal E_2)
\end{split}\end{equation}
with
\begin{equation}\notag
    \mathcal E_1  
    \ll \frac{T^{-3\alpha/2}}{\Lambda(n)} \frac{1}{T} \sum_{m\neq n} \Lambda(m) a_m(nT^\alpha) \bigg|\int_T^{2T} \bigg(\frac{n}{m}\bigg)^{it} \log\frac{t}{2\pi} dt\bigg|
\end{equation}
and
$$\mathcal E_2 \ll \frac{T^{-3\alpha/2}}{\Lambda(n)} \frac{1}{T} \sum_{m} \Lambda(m) a_m(nT^\alpha) \bigg|\int_T^{2T} \bigg(\frac{n}{m}\bigg)^{it} \frac{\zeta'}{\zeta}(3/2+it) dt\bigg|.$$
We start by bounding $\mathcal E_1$. Using the first derivative test for oscillating integrals (see e.g. \cite[Lemma 4.3]{Titchmarsh1}), we obtain
\begin{equation}\notag
    \mathcal E_1 
    \ll \frac{T^{-3\alpha/2}}{\Lambda(n)} \frac{\log T}{T} \sum_{m\neq n}  \frac{\Lambda(m) a_m(nT^\alpha)}{|\log(m/n)|}
    \ll \frac{T^{-3\alpha/2}}{\Lambda(n)} \frac{\log T}{T} \sum_{m\neq n}  \frac{m\Lambda(m) a_m(nT^\alpha)}{|m-n|}.
\end{equation}
The remaining sum can be bounded with a similar strategy as in the proof of \eqref{ClaimDoubleSum}. Namely, one splits in two cases: if $m\geq2n$ then $|m-n|\gg m$, so the sum is clearly $\ll nT^\alpha$. If instead $m< 2n$, we do the change of variable $h=m-n$, and we get a contribution bounded by $\ll nT^\alpha(\log n)^2$, because the sum over $h$ is short (i.e. $0<|h|<n$). Arguing as described, one gets
\begin{equation}\notag
    \mathcal E_1 
    \ll \frac{nT^{-\alpha/2}\log T(\log n)^2}{\Lambda(n)T}
    \ll \frac{n(\log n)^2\log T}{T}
    \ll \mathcal R.
\end{equation}
Now we deal with $\mathcal E_2$.
To bound the integral, we open the log-derivative of zeta as a Dirichlet series and integrate over $t$ term by term, getting:
\begin{equation}\notag
\int_T^{2T} \bigg(\frac{n}{m}\bigg)^{it} \frac{\zeta'}{\zeta}(3/2+it) dt
\ll \frac{\Lambda(n/m)T}{(n/m)^{3/2}}
+ \sum_{bm\neq n} \frac{\Lambda(b)}{b^{3/2}}\frac{m}{|bm-n|} .
\end{equation}
Therefore, 
\begin{equation}\notag\label{4july.1}
\mathcal E_2 
\ll \frac{T^{-3\alpha/2}}{\Lambda(n)} \sum_{m}   \frac{\Lambda(m) \Lambda(n/m) a_m(nT^\alpha)}{(n/m)^{3/2}}
+ \frac{T^{-3\alpha/2}}{T} \sum_{m} \Lambda(m) a_m(nT^\alpha) \sum_{bm\neq n} \frac{\Lambda(b)}{b^{3/2}}\frac{n}{|bm-n|}.
\end{equation}
The first term above can be bounded easily. Since $n=q^a$, the factor $\Lambda(n/m)$ forces $m$ to be a power of $q$ smaller than $n$. Then, the sum over $m$ can be restricted to the range $m<n$, in which case $a_m(nT^\alpha)= \sqrt{m}/\sqrt{nT^\alpha}$. Hence, the first term is
$$\ll \frac{T^{-2\alpha}}{\Lambda(n)n^2} \sum_{m<n} \Lambda(m) \Lambda(n/m) m^2
\ll \frac{T^{-2\alpha}\Lambda(n)}{n^2} \sum_{c<a} q^{2c}
\ll \frac{T^{-2\alpha}\Lambda(n)}{n^2} \frac{n^2}{q^2}
\ll T^{-2\alpha}.$$
For the second term, one can apply our now familiar machinery for this kind of sums, and show that it is $\ll \mathcal R$. The proof works exactly as the similar ones described above: if $|bm-n|$ is (say) $\geq \frac{n}{2}$, then the sum is clearly smaller than $n/T$. If instead $|bm-n|<\frac{n}{2}$, we do the change of variable $h=bm-n$ and we still win because the sum over $h$ is short.
This yields 
$$\mathcal E_2 \ll T^{-2\alpha} + \mathcal R .$$
Therefore \eqref{7sep.3} reads
\begin{equation}\label{7sep.4}
    F_n^{1-2}(\alpha) 
    = T^{-2\alpha}(\log T+O(1))
    + O(\mathcal R).
\end{equation}

Arguing similarly, we can show that
\begin{equation}\label{8sep.1}
F_n^{2-1}(\alpha) \ll T^{-2\alpha} + \mathcal R.
\end{equation}
The computation follows closely that of $F_n^{1-2}(\alpha)$. The main difference is the phase of the oscillating integral, which is now $(nl)^{it}$. Since $n$ and $l$ are prime powers, the phase is never close to 1, and therefore the integral has no diagonal contribution, i.e. no main term.  

Also $F_n^{2-2}(\alpha)$ can be handled with similar techniques. For starters, we expand
$$ \bigg| \log\frac{t}{2\pi} + \frac{\zeta'}{\zeta}(3/2-it)\bigg|^2
= \bigg(\log\frac{t}{2\pi}\bigg)^2 
+ \log\frac{t}{2\pi}\sum_b \frac{\Lambda(b)}{b^{3/2}}(b^{it}+b^{-it})
+ O(1).$$
The contribution of the error term above to $F_n^{2-2}(\alpha)$ is clearly $\ll T^{-2\alpha}$. Moreover, the term $(\log\frac{t}{2\pi})^2$ contributes $\ll (\log T)^2/T$, by an application of the first derivative test \cite[Lemma 4.3]{Titchmarsh1}. The same lemma also provides a bound for the term involving $b^{it}$, which turns out to be $\ll \log T/T$.
Therefore,
\begin{equation}\begin{split}\notag
    F_n^{2-2}(\alpha) 
    &= \frac{T^{-2\alpha}}{\sqrt{n}\Lambda(n)} \frac{1}{T} \sum_b \frac{\Lambda(b)}{b^{3/2}}  \int_T^{2T} \bigg(\frac{n}{b}\bigg)^{it} \log\frac{t}{2\pi} dt
    + O(T^{-2\alpha}) +O \bigg(\frac{(\log T)^2}{T}\bigg ).
\end{split}\end{equation} 
As usual, we isolate the main term coming from the diagonal term $n=b$, getting
\begin{equation}\begin{split}\label{8sep.2}
    F_n^{2-2}(\alpha) 
    &=\frac{T^{-2\alpha}\log T}{n^2} 
    +O\bigg( \frac{T^{-2\alpha}}{\sqrt{n}T} \sum_{b\neq n} \frac{\Lambda(b)}{b^{3/2}} \frac{\log T}{|\log(n/b)|} \bigg) +O(T^{-2\alpha}) 
    +O \bigg(\frac{(\log T)^2}{T}\bigg )\\
    &=\frac{T^{-2\alpha}\log T}{n^2} 
    +O(\mathcal R) +O(T^{-2\alpha})
\end{split}\end{equation}
since
$$ \frac{T^{-2\alpha}}{\sqrt{n}T} \sum_{b\neq n} \frac{\Lambda(b)}{b^{3/2}} \frac{\log T}{|\log(n/b)|}
\ll \frac{\log T}{\sqrt{n}T} \sum_{b\neq n} \frac{\Lambda(b)}{b^{3/2}} \frac{n}{|n-b|}
\ll \frac{\sqrt{n}\log T}{T} 
\ll \mathcal R .$$ 

Plugging \eqref{7sep.2}, \eqref{7sep.4}, \eqref{8sep.1}, and \eqref{8sep.2} into \eqref{7sep.1}, we conclude the proof.

\subsection{Negative alpha} 

The case $-\frac{\log n}{\log T}\leq\alpha\leq0$ is similar to the case of positive $\alpha$. In this range $1\leq T^{-\alpha}\leq n$ and $1\leq nT^{\alpha}\leq n$. Therefore, arguing as in \cite[p. 187-188]{Montgomery1} and applying Lemma \ref{ExplicitFormulaRevisited}, we have
\begin{equation}\begin{split}\label{negative.1}
    F_n(\alpha) 
    &= -\frac{4\sqrt{n}}{T\Lambda(n)} \int_{T}^{2T} 
    \bigg(\sum_{\gamma} \frac{(nT^\alpha)^{i\gamma} }{1+(t-\gamma)^2}\bigg)\bigg( \sum_{\gamma'}\frac{(T^{-\alpha})^{i\gamma'}}{1+(t-\gamma')^2}\bigg) dt + O\bigg(\frac{\sqrt{n}(\log T)^3}{T}\bigg) \\
    &= F_n^{1-1}(\alpha) + F_n^{1-2}(\alpha) + F_n^{2-1}(\alpha) + F_n^{2-2}(\alpha) + O\bigg(\frac{\sqrt{n}(\log T)^3}{T}\bigg)
\end{split}\end{equation}
where
\begin{equation}\begin{split}\notag
    F_n^{1-1}(\alpha) 
    &= -\frac{1}{T\Lambda(n)} \sum_m \sum_l \Lambda(m) \Lambda(l) a_m(nT^\alpha) a_l(T^{-\alpha}) \int_{T}^{2T} 
    \bigg(\frac{n}{ml}\bigg)^{it} dt \\
    F_n^{1-2}(\alpha) 
    &= \frac{T^{\alpha/2}}{T\Lambda(n)} \sum_m \Lambda(m) a_m(nT^\alpha) \int_{T}^{2T} 
     \bigg(\frac{n}{m}\bigg)^{it}
    \bigg(\log\frac{t}{2\pi} + \frac{\zeta'}{\zeta}(3/2-it)\bigg) dt\\
    F_n^{2-1}(\alpha) 
    &= \frac{1}{T\sqrt{nT^\alpha}\Lambda(n)} \sum_l \Lambda(l) a_l(T^{-\alpha}) \int_{T}^{2T} \bigg(\log\frac{t}{2\pi} + \frac{\zeta'}{\zeta}(3/2-it)\bigg)  \bigg(\frac{n}{l}\bigg)^{it} dt\\
    F_n^{2-2}(\alpha) 
    &= -\frac{1}{T\sqrt{n}\Lambda(n)} \int_{T}^{2T} n^{it}  \bigg(\log\frac{t}{2\pi} + \frac{\zeta'}{\zeta}(3/2-it)\bigg)^2 dt.
\end{split}\end{equation}

As in the previous proof, we now analyze the four terms one by one. We start by $F_n^{1-1}$; we isolate the contribution of the diagonal $ml=n$, for which we use the trivial identity $a_m(nT^\alpha)a_{n/m}(T^{-\alpha}) = \min\{\frac{nT^\alpha}{m},\frac{m}{{nT^\alpha}}\}^2$. 
The off-diagonal terms $ml\neq n$ can be bounded by $\ll (\log T)^2n^{1+\delta}/T$ for any $\delta>0$, in a similar (and easier) way as in \eqref{ClaimDoubleSum}. 
For example, when $m\leq nT^\alpha$ and $l\leq T^{-\alpha}$, the off-diagonal contribution can be bounded by
\begin{equation}\begin{split}\notag
\frac{(\log T)^2}{\sqrt{n}T}\sum_{\substack{m\leq nT^\alpha \\ l\leq T^{-\alpha}}} \frac{(ml)^{3/2}}{n-ml}
&\ll \frac{(\log T)^2}{\sqrt{n}T}\sum_{ml\leq \frac{n}{2}} \frac{(ml)^{3/2}}{n-ml}
+ \frac{(\log T)^2}{\sqrt{n}T}\sum_{\frac{n}{2}<ml< n} \frac{(ml)^{3/2}}{n-ml}\\
&\ll \frac{(\log T)^2}{n\sqrt{n}T}\sum_{ml\leq \frac{n}{2}} (ml)^{3/2}
+ \frac{n(\log T)^2}{T}\sum_{0<h<\frac{n}{2}} \frac{d(n-h)}{h}
\ll \frac{n^{1+\delta}(\log T)^2}{T}.
\end{split}\end{equation}
Hence, we obtain
\begin{equation}\label{negative.2}
    F_n^{1-1}(\alpha) 
    = -\frac{1}{\Lambda(n)} \sum_m \Lambda(m)\Lambda(n/m) \min\bigg\{ \frac{nT^\alpha}{m},\frac{m}{{nT^\alpha}} \bigg\}^2
    +O(\mathcal R).
\end{equation}

To analyze $F_n^{1-2}(\alpha)$, we apply the same machinery as in the previous section. For the $\log\frac{t}{2\pi}$ term, we isolate the diagonal term $n=m$, for which $a_n(nT^\alpha)=T^{3\alpha/2}$. As for the $\frac{\zeta'}{\zeta}(3/2-it)$-term, we expand the log-derivative of zeta as a Dirichlet series indexed by the parameter $b$, and we isolate the contribution from $b=m/n$. This yields
\begin{equation}\begin{split}\notag
    F_n^{1-2}(\alpha) 
    &= T^{2\alpha}(\log T+O(1)) 
    - \frac{T^{\alpha/2}n^{3/2}}{\Lambda(n)} \sum_m \frac{\Lambda(m) \Lambda(m/n) a_m(nT^\alpha)}{m^{3/2}}  \\
    &\quad +O\bigg( \frac{T^{\alpha/2}}{T} \sum_{m \neq n} \Lambda(m) a_m(nT^\alpha) \frac{n\log T}{|n-m|} \bigg)\\
    &\quad + O\bigg( \frac{T^{\alpha/2}}{T} \sum_{bn\neq m} \frac{ \Lambda(m)\Lambda(b)}{b^{3/2}} a_m(nT^\alpha) \frac{m}{|bn-m|} \bigg).
\end{split}\end{equation} 
Bounding the error terms above by $\ll n\log T(\log n)^2/T \ll \mathcal R$ is now a routine calculation. 
Moreover, the second term on the first line in the display above is also small. Indeed, since $n=q^a$, the sum is supported on values of $m$ such that $m$ is a power of $q$, and $m>n\geq nT^\alpha$. For these $m$, we have $a_m(nT^\alpha) = (nT^\alpha/m)^{3/2}$, and we therefore obtain
$$\frac{T^{\alpha/2}n^{3/2}}{\Lambda(n)} \sum_m \frac{\Lambda(m) \Lambda(m/n) a_m(nT^\alpha)}{m^{3/2}} 
= \frac{T^{2\alpha}n^3}{\Lambda(n)} \sum_{c>a} \frac{\Lambda(n)^2}{q^{3c}} 
\ll T^{2\alpha} \frac{\log q}{q^3}
\ll T^{2\alpha}.$$
As a consequence,
\begin{equation}\begin{split}\label{negative.3}
    F_n^{1-2}(\alpha) 
    &= T^{2\alpha}(\log T+O(1)) 
    +O(\mathcal R).
\end{split}\end{equation} 

An analogous argument leads to
\begin{equation}\begin{split}\label{negative.4}
    F_n^{2-1}(\alpha) 
    =\frac{\log T+O(1)}{(nT^\alpha)^2} 
    +O(\mathcal R).
\end{split}\end{equation}

Finally, we handle $F_n^{2-2}(\alpha)$ with similar techniques as in the previous section. We first expand the square 
$$\bigg(\log\frac{t}{2\pi} + \frac{\zeta'}{\zeta}(3/2-it)\bigg)^2
= \bigg(\log\frac{t}{2\pi}\bigg)^2
- 2\log\frac{t}{2\pi} \sum_b\frac{\Lambda(b)}{b^{3/2-it}}
+ \sum_{b,c} \frac{\Lambda(b)\Lambda(c)}{(bc)^{3/2-it}}.$$
We plug the above equation in the definition of $F_n^{2-2}(\alpha)$. The contribution from the term $(\log\frac{t}{2\pi})^2$ to $F_n^{2-2}(\alpha)$ is certainly $\ll (\log T)^2/T$, by a standard application of the first derivative test \cite[Lemma 4.3]{Titchmarsh1}. The second and third term can be bounded similarly, since the oscillating terms ($(nb)^{it}$ and $(nbc)^{it}$ respectively) are never close to 1. Doing so, we obtain 
\begin{equation}\label{negative.5}
    F_n^{2-2}(\alpha) 
    \ll \mathcal R.
\end{equation}
The desired claim follows from Equations \eqref{negative.1}-\eqref{negative.5}

\section{Proof of Proposition \ref{TPC_IntermediateAlpha}}\label{SecIntermAlpha}

Following \cite{GG}, we introduce some convenient notations:
\begin{equation}\begin{split}
    A(s) := \sum_{m \le N} \frac{\Lambda(m)}{m^s}, \qquad 
    B(s) := \sum_{m \le N} \frac{\Lambda(m/n)}{m^s}, \\
    A^*(s) := \sum_{m > N} \frac{\Lambda(m)}{m^s}, \qquad 
    B^*(s) := \sum_{m > N} \frac{\Lambda(m/n)}{m^s}.
\end{split}\end{equation}
and
\begin{align*}
    \mathcal{A}(t) &:= \frac{1}{N} \left( A \left( - \frac{1}{2} + i t \right) - \int_1^N u^{1/2- i t} \, du \right), \\ 
    \mathcal{A}^*(t) &:= N \left( A^* \left(\frac{3}{2} + i t \right) - \int_N^\infty u^{-3/2 - it} \, du \right), \\
    \mathcal{B}(t) &:= \frac{1}{N} \left( B \left( - \frac{1}{2} + i t \right) - \frac{1}{n} \int_1^N u^{1/2- i t} \, du\right), \\
    \mathcal{B}^*(t) &:= N \left( B^* \left(\frac{3}{2} + i t \right) - \frac{1}{n} \int_N^\infty u^{-3/2 - it} \, du \right).
\end{align*}

The first lemma we prove is an application of the explicit formula. For convenience, we introduce the notation
\begin{equation}\begin{split}\label{defErrorE}
    \mathcal S
    := \frac{n\log T}{NT} \int_{\mathbb R} \bigg( \sqrt{n}|\mathcal A(t)| + \sqrt{n}|\mathcal A^*(t)|  + |\mathcal B(t)| &+ |\mathcal B^*(t)| \bigg) \psi_U\bigg(\frac{t}{T}\bigg) dt\\
    &+\frac{\sqrt{n}(\log T)^3}{T} 
    + \frac{n^{3/2}(\log T)^2}{N^2},
\end{split}\end{equation} 
which will appear as an error term in the following.

\begin{lemma}\label{IntermediateRangeExplForm}
    Assume RH. Let $n$ be a prime power and denote $N=nT^\alpha$. Also, suppose that $\alpha>0$. Then we have
    \begin{equation}
        F_n(\alpha;\psi_U) = - \frac{n}{T\Lambda(n)} \int_{\mathbb R} \left( \mathcal{A}(t) + \mathcal{A}^*(t) \right) \overline{\left( \mathcal{B}(t) + \mathcal{B}^*(t) \right)} \psi_U\bigg(\frac{t}{T}\bigg) dt 
        + O(\mathcal S),
    \end{equation}
    where $\mathcal S$ is defined in \eqref{defErrorE}.
    
\end{lemma}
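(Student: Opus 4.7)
The strategy is to apply the Montgomery--Gonek explicit formula of Lemma \ref{ExplicitFormulaRevisited} separately to the two sums over zeros appearing in $F_n(\alpha;\psi_U)$, and then read off the Dirichlet polynomials $\mathcal{A},\mathcal{A}^*,\mathcal{B},\mathcal{B}^*$ directly from the result. Writing $N:=nT^\alpha$, I first substitute the integral representation
\[
\omega_{\psi_U}(\gamma,\gamma')=\frac{2}{\pi}\int_{\mathbb R}\frac{\psi_U(t/T)\,dt}{(1+(t-\gamma)^2)(1+(t-\gamma')^2)}
\]
into the definition of $F_n(\alpha;\psi_U)$, factor the phase as $n^{i\gamma}T^{i\alpha(\gamma-\gamma')}=N^{i\gamma}\overline{(N/n)^{i\gamma'}}$, and interchange sum and integral. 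Extending the inner sums from $T\le\gamma,\gamma'\le 2T$ to all zeros is standard (cf.\ \cite[pp.\ 187--188]{Montgomery1}, already used in Section~\ref{SecSmallAlpha}), and produces an error of size $O(\sqrt{n}(\log T)^3/T)$, absorbed into the second term of $\mathcal S$. This yields
\[
F_n(\alpha;\psi_U)=-\frac{\sqrt n}{T\Lambda(n)}\int_{\mathbb R}\psi_U(t/T)\bigl(2S_N(t)\bigr)\overline{\bigl(2S_{N/n}(t)\bigr)}\,dt+O\!\left(\frac{\sqrt n(\log T)^3}{T}\right),
\]
where $S_y(t):=\sum_\gamma y^{i\gamma}/(1+(t-\gamma)^2)$.

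Next, I apply Lemma \ref{ExplicitFormulaRevisited} to $2S_N(t)$ with $y=N$ and to $2S_{N/n}(t)$ with $y=N/n=T^\alpha$. In the second case, the change of variable $m=n\ell$ converts $\Lambda(\ell)$ into $\Lambda(m/n)$, producing exactly the coefficients of $B(s)$ and $B^*(s)$; rearranging via the defining relations $A(-\tfrac12+it)=N\mathcal{A}(t)+\int_1^N u^{1/2-it}du$, $A^*(\tfrac32+it)=\tfrac{1}{N}\mathcal{A}^*(t)+\int_N^\infty u^{-3/2-it}du$ and their $B$-analogues yields a clean decomposition
\begin{equation*}
2S_N(t)=-N^{it}\bigl(\mathcal{A}(t)+\mathcal{A}^*(t)\bigr)+R_A(t),\quad 2S_{N/n}(t)=-\sqrt{n}\,N^{it}\bigl(\mathcal{B}(t)+\mathcal{B}^*(t)\bigr)+R_B(t),
\end{equation*}
where $R_A,R_B$ collect the subtracted integrals, the zeta-log term $y^{-1+it}(\log\tfrac{t}{2\pi}+\tfrac{\zeta'}{\zeta}(3/2-it))$, and the pointwise errors from Lemma \ref{ExplicitFormulaRevisited}. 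Expanding the product and using $N^{it}\overline{N^{it}}=1$, the cross-term $\sqrt{n}(\mathcal{A}+\mathcal{A}^*)\overline{(\mathcal{B}+\mathcal{B}^*)}$ combined with the prefactor $-\sqrt{n}/(T\Lambda(n))$ produces exactly the claimed main term $-\tfrac{n}{T\Lambda(n)}\int\psi_U(t/T)(\mathcal{A}+\mathcal{A}^*)\overline{(\mathcal{B}+\mathcal{B}^*)}\,dt$.

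It remains to absorb the three residual pieces $-N^{it}(\mathcal{A}+\mathcal{A}^*)\overline{R_B}$, $-\sqrt{n}N^{-it}R_A\overline{(\mathcal{B}+\mathcal{B}^*)}$ and $R_A\overline{R_B}$ into $\mathcal S$. The key computation is the explicit evaluation
\[
\int_1^N u^{1/2-it}du=\frac{N^{3/2-it}-1}{3/2-it},\qquad \int_N^\infty u^{-3/2-it}du=\frac{N^{-1/2-it}}{1/2+it},
\]
together with the cancellation $\tfrac{1}{3/2-it}+\tfrac{1}{1/2+it}=\tfrac{2}{(3/2-it)(1/2+it)}\ll T^{-2}$ for $t\sim T$, which saves a full power of $T$ over the naive bound on each integral separately. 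Combined with the trivial estimate $|\log(t/2\pi)+\tfrac{\zeta'}{\zeta}(3/2-it)|\ll\log T$, this yields on the support of $\psi_U(t/T)$ the bounds
\[
|R_A(t)|\ll\frac{\log T}{N}+\frac{N^{1/2}}{T^2},\qquad |R_B(t)|\ll\frac{n\log T}{N}+\frac{N^{1/2}}{\sqrt n\,T^2}.
\]
Since $\Lambda(n)^{-1}\ll 1$ uniformly in prime powers $n$, the two linear-in-$R$ contributions are bounded by $\tfrac{n\log T}{NT}\int\psi_U(\sqrt n|\mathcal{A}+\mathcal{A}^*|+|\mathcal{B}+\mathcal{B}^*|)\,dt$, matching the first term of $\mathcal S$, while the bilinear piece $R_A\overline{R_B}$ yields $\ll n^{3/2}(\log T)^2/N^2$, matching the third term. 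I expect the main technical effort to lie in the bookkeeping of the various components of $R_A$ and $R_B$ and in verifying the cancellation above; once this is in hand the remainder of the argument is purely algebraic.
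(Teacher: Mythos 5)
Your overall architecture is the same as the paper's: insert the integral representation of $\omega_{\psi_U}$, extend the zero sums at a cost $O(\sqrt{n}(\log T)^3/T)$, apply an explicit formula to each of the two factors, and expand the product. The identification of the coefficients via $m=n\ell$ and the treatment of the bilinear remainder are fine. However, there is a genuine quantitative gap in the choice of explicit formula. You apply Lemma \ref{ExplicitFormulaRevisited}, which carries a pointwise error $O(\sqrt{y}/T^2)$, and your own conversion to $\mathcal A,\mathcal A^*$ (adding back the two integrals $\tfrac1N\int_1^N u^{1/2-it}du$ and $N\int_N^\infty u^{-3/2-it}du$) produces, even after the cancellation $\tfrac{1}{3/2-it}+\tfrac{1}{1/2+it}\ll T^{-2}$, another term of size $N^{1/2}/T^2$. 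So $|R_A|\ll \log T/N+N^{1/2}/T^2$ and $|R_B|\ll n\log T/N+N^{1/2}/(\sqrt{n}T^2)$, exactly as you state. The problem is your claim that the linear-in-$R$ pieces are bounded by the first term of $\mathcal S$: that only uses the $\log T/N$ components. The $N^{1/2}/T^2$ components give contributions such as $\frac{\sqrt{n}}{T\Lambda(n)}\cdot\frac{N^{1/2}}{\sqrt{n}\,T^{2}}\int|\mathcal A+\mathcal A^*|\psi_U(t/T)\,dt$, and comparing with $\frac{n^{3/2}\log T}{NT}\int(|\mathcal A|+|\mathcal A^*|)\psi_U\,dt$ this is dominated only when $N^{3/2}\ll n^{3/2}T^2\log T$, i.e. $N\ll nT^{4/3}(\log T)^{2/3}$. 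The lemma is stated for all $\alpha>0$ and is applied in Lemma \ref{LemmaHLC} and Theorem \ref{TPC_IntermediateAlpha} with $N=nT^\alpha$ up to essentially $T^2$, so in the regime where the lemma is actually needed your error terms are not $O(\mathcal S)$ and cannot be absorbed without further (mean-value) input that is not available under RH alone at this stage.

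The paper avoids this entirely by using Montgomery's formula in the form \cite[(4.5)]{GGOS}, in which the smooth terms $x^{3/2-it}/(3/2-it)$ and $x^{-1/2-it}/(1/2+it)$ are kept and subtracted \emph{exactly} from the Dirichlet polynomials --- this is precisely why $\mathcal A,\mathcal A^*,\mathcal B,\mathcal B^*$ are defined with those integrals removed --- leaving a pointwise error of only $O(\log(|t|+2)/x)$, i.e. $O(\log T/N)$ for the first factor and $O(n\log T/N)$ for the second. With that version your expansion goes through verbatim and all remainders land in $\mathcal S$. To repair your argument, replace Lemma \ref{ExplicitFormulaRevisited} by that sharper form (or re-derive Lemma \ref{ExplicitFormulaRevisited} keeping the two smooth terms explicit rather than bounding them by $O(\sqrt{y}/T^2)$); the rest of your bookkeeping then stands.
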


\begin{proof}
By definition of $\omega_{\psi_U}$ \eqref{SmoothOmega}, we have
\begin{equation}\begin{split}\notag
    F_n(\alpha;\psi_U) 
    &= - \frac{\sqrt{n}}{T\Lambda(n)}  
    \int_{\mathbb R} \left( 2 \sum_{T\leq \gamma\leq 2T} \frac{(nT^\alpha)^{i\gamma}}{1+(t-\gamma)^2} \right) \left( 2 \sum_{T\leq \gamma'\leq 2T} \frac{(T^\alpha)^{-i\gamma'}}{1+(t-\gamma')^2} \right) \psi_U\bigg(\frac{t}{T}\bigg) dt\\
    &= - \frac{\sqrt{n}}{T\Lambda(n)}  
    \int_{\mathbb R} \left( 2 \sum_{\gamma} \frac{(nT^\alpha)^{i\gamma}}{1+(t-\gamma)^2} \right) \left( 2 \sum_{\gamma'} \frac{(T^\alpha)^{-i\gamma'}}{1+(t-\gamma')^2} \right) \psi_U\bigg(\frac{t}{T}\bigg) dt
    + O(\mathcal S).
\end{split}\end{equation}
The second line is obtained by  arguing like in \cite[p. 187-188]{Montgomery1}, since $\psi_U$ is a smooth function supported on $[1,2]$. It suffices to note that the conditions restricting the sums over zeros can be removed up to an error term $\ll \sqrt{n}(\log T)^3/T \ll \mathcal S$.
We now employ a formula due to Montgomery \cite{Montgomery1} in the following version (see e.g. \cite[(4.5)]{GGOS}):
\begin{equation}\begin{split}\notag
   -2 \sum_{\gamma} \frac{x^{i\gamma}}{1+(t-\gamma)^2} 
   = &\frac{x^{it}}{x}\bigg( \sum_{m\leq x} \frac{\Lambda(m)}{m^{-1/2+it}} - \frac{x^{3/2-it}}{\frac{3}{2}-it} \bigg)\\
   &+x^{1+it}\bigg( \sum_{m> x} \frac{\Lambda(m)}{m^{3/2+it}} - \frac{x^{-1/2-it}}{\frac{1}{2}+it} \bigg) + O\bigg(\frac{\log(|t|+2)}{x}\bigg)
\end{split}\end{equation}
for $x\geq 1$. In the notations we introduced in the beginning of the section, by a double application of the above formula and some trivial manipulations, we obtain
\begin{equation}\begin{split}\notag
    F_n(\alpha;\psi_U) 
    &= - \frac{n}{T\Lambda(n)}  
    \int_{\mathbb R} \left( \mathcal A(t) + \mathcal A^*(t) + O\bigg(\frac{\log T}{N}\bigg) \right)\\
    &\hspace{3cm} \times\left( \overline{\mathcal B(t)} + \overline{\mathcal B^*(t)} + O\bigg(\frac{\sqrt{n}\log T}{N}\bigg) \right) \psi_U\bigg(\frac{t}{T}\bigg) dt 
    + O (\mathcal S)\\
    &= - \frac{n}{T\Lambda(n)}  
    \int_{\mathbb R} \left( \mathcal A(t) + \mathcal A^*(t)  \right)
    ( \overline{\mathcal B(t)} + \overline{\mathcal B^*(t)}  ) \psi_U\bigg(\frac{t}{T}\bigg) dt 
    +O(\mathcal S_1) + (\mathcal S),
\end{split}\end{equation}
where
\begin{equation}\begin{split}\notag
    \mathcal S_1 \ll \frac{n\log T}{NT} \int_{\mathbb R} \bigg( \sqrt{n}|\mathcal A(t)| + \sqrt{n}|\mathcal A^*(t)|  +|\mathcal B(t)| + |\mathcal B^*(t)|\bigg) \psi_U\bigg(\frac{t}{T}\bigg) dt +O\bigg(\frac{n^{3/2}(\log T)^2}{N^2}\bigg).
\end{split}\end{equation}
\end{proof}

\begin{lemma}\label{LemmaHLC}
    Assume RH and Conjecture \ref{HLC}. Let $n$ be a prime power, and denote $N:=nT^\alpha$. Suppose that $T\leq N\leq T^2$.
    Then, for any $\nu\in(0,\frac{1}{2})$, we have
    \begin{equation}\begin{split}\notag
        F_n(\alpha;\psi_U) = &-\frac{2}{\Lambda(n)} \int_1^\infty f_n(y) \Re\hat\psi_U\left( \frac{y T}{2 \pi N} \right) \, dy \\
        &+ O\bigg(\frac{1}{\Lambda(n)}\bigg) 
        +O\bigg( \frac{nN^{1+5\nu}}{T^2} + \frac{nN^{\frac{1}{2}+\frac{11}{2}\nu}}{T} + nT^{-\frac{\nu}{2}} \bigg)
        + O(\mathcal S),
    \end{split}\end{equation}
    where $\mathcal S$ is defined in \eqref{defErrorE}.
\end{lemma}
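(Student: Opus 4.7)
The starting point is Lemma \ref{IntermediateRangeExplForm}, which expresses $F_n(\alpha;\psi_U)$ as a smoothed integral of the product
$(\mathcal{A}+\mathcal{A}^*)\overline{(\mathcal{B}+\mathcal{B}^*)}$ against $\psi_U(t/T)$, up to the error $\mathcal S$. The plan is to expand this product into four cross terms, evaluate each one using Conjecture \ref{HLC} to handle the resulting shifted convolutions of $\Lambda(m)$ and $\Lambda(m/n)$, and finally recognize the combined expression in terms of $f_n(y)$ from Lemma \ref{LemmaAsForf_n}.

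Concretely, after expanding $\mathcal{A}$, $\mathcal{A}^*$, $\mathcal{B}$, $\mathcal{B}^*$ as Dirichlet polynomials (minus their integral approximations), each of the four terms becomes a double sum of the shape
\[ \sum_{m,m'} \frac{\Lambda(m)\Lambda(m'/n)}{\sqrt{m m'}} \cdot \frac{1}{T}\int_{\mathbb R} \Bigl(\frac{m'}{m}\Bigr)^{it} \psi_U\!\left(\frac{t}{T}\right)\,dt, \]
possibly with a truncation $m\leq N$ or $m>N$ (analogously for $m'$), and with the integral-subtractions serving to regularize the main diagonal. The inner integral produces $\hat\psi_U\!\bigl(\frac{T}{2\pi}\log(m/m')\bigr)$, and for $m' = m+h$ this is well-approximated by $\hat\psi_U\!\bigl(\frac{hT}{2\pi m}\bigr)$ provided $|h| \le m^{1-\nu}$ (say). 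I would introduce the parameter $\nu$ as this truncation threshold on $|h|/m$: for $|h|\le m^{1-\nu}$ one can linearize the logarithm and replace $\sqrt{m(m+h)}$ by $m$, while the contribution of $|h|>m^{1-\nu}$ is controlled by the rapid decay of $\hat\psi_U$ (since $\psi_U^{(j)} \ll U^j$ gives $\hat\psi_U(\xi)\ll_A |\xi|^{-A}$ for $|\xi|\ge U^{1+\epsilon}$), producing the term $nT^{-\nu/2}$.

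The core calculation is then to apply Conjecture \ref{HLC} to evaluate
\[ \sum_{m\sim M} \Lambda(m)\Lambda\!\left(\frac{m+h}{n}\right)
= \frac{\mathfrak{S}_n(h)}{n}\cdot (\text{length}) + O(M^{1/2+\epsilon}), \]
uniformly for $1\le h\le M^{1-\epsilon}$. Summation over $h$ (weighted by $\hat\psi_U(hT/(2\pi m))/m$) followed by partial summation turns the singular-series sum into an integral of $S_{0,n}$, and two further integrations by parts — exactly as in the proof of Lemma \ref{LemmaAsForf_n} — convert it into a combination of $S_{2,n}(y)/y^3$ and $y T_{2,n}(y)$, that is to say $f_n(y)$. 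The four cross terms $\mathcal{A}\overline{\mathcal{B}}, \mathcal{A}\overline{\mathcal{B}^*}, \mathcal{A}^*\overline{\mathcal{B}}, \mathcal{A}^*\overline{\mathcal{B}^*}$ give complementary ranges of $y$ that, together with the regularizing subtractions from the definitions of $\mathcal{A},\mathcal{B}$, etc., paste together into $\int_1^\infty f_n(y)\,\Re\hat\psi_U(yT/(2\pi N))\,dy$, with the factor $-n/(T\Lambda(n))$ from Lemma \ref{IntermediateRangeExplForm} producing the prefactor $-2/\Lambda(n)$ after a change of variables $y = hT/N$ or similar. The exact diagonal $h=0$ (i.e.\ $m=m'$) combines with the subtracted integral pieces to produce the main $1/\Lambda(n)$ error (since the subtracted integrals are designed to remove precisely the unbounded piece of the diagonal).

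The main obstacle will be organizing the bookkeeping so that (i) the four error streams from Conjecture \ref{HLC} (each of shape $\sum_h M^{1/2+\epsilon}$ with weights) genuinely sum to the stated error term $nN^{1+5\nu}/T^2 + nN^{1/2+11\nu/2}/T + nT^{-\nu/2}$; (ii) the contributions from $|h|$ close to the truncation thresholds $N^{1-\nu}$ and from the tail $|h|>N^{1-\nu}$ match up cleanly, which is the source of the particular exponents $5\nu$ and $11\nu/2$; and (iii) the partial summations converting shifted-prime correlations into $f_n$ use the asymptotics of Lemma \ref{LemmaAsForf_n} uniformly across the four cross-terms. The dependence of the error on $\nu$ reflects the standard trade-off between the Hardy–Littlewood error (which favors small truncation) and the smoothing-tail loss from $\hat\psi_U$ (which favors large truncation); one eventually optimizes $\nu$ when combining with the outer estimates in the proof of Theorem \ref{TPC_IntermediateAlpha}, which explains why $\nu$ is left as a free parameter here.
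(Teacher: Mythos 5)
Your decomposition is the same as the paper's: both start from Lemma \ref{IntermediateRangeExplForm} and split $F_n(\alpha;\psi_U)$ into the four cross terms $\mathcal{A}\overline{\mathcal{B}}$, $\mathcal{A}\overline{\mathcal{B}^*}$, $\mathcal{A}^*\overline{\mathcal{B}}$, $\mathcal{A}^*\overline{\mathcal{B}^*}$. The essential difference is that the paper does not carry out the long-Dirichlet-polynomial mean values by hand: it invokes the Goldston--Gonek machinery directly, namely \cite[Corollary 1]{GG} for $\mathcal{A}\overline{\mathcal{B}}$, \cite[Theorem 3]{GG} for the two mixed terms, and \cite[Corollary 2]{GG} for $\mathcal{A}^*\overline{\mathcal{B}^*}$, with Conjecture \ref{HLC} (together with RH) used only to verify the correlation hypotheses of those results. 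The error terms $nN^{1+5\nu}/T^2$, $nN^{1/2+11\nu/2}/T$ and $nT^{-\nu/2}$ in the statement are inherited verbatim from those theorems (the parameter $\nu$ enters through the Goldston--Gonek cutoffs $\tau=T^{1-\nu}$ and $H^*=N^{2/(1-\nu)}/T^{2(1-\nu)}$, not as a truncation on $|h|/m$ as you suggest). Your plan to rederive all of this from scratch is in principle viable --- it is what \cite{GG} does --- but you explicitly defer the derivation of exactly these exponents to a ``main obstacle,'' and without either citing or reproving the Goldston--Gonek results there is no route to the specific error terms the lemma asserts. That is the substantive gap.

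Two structural points in your sketch are also off. First, the mixed terms $\mathcal{A}\overline{\mathcal{B}^*}$ and $\mathcal{A}^*\overline{\mathcal{B}}$ do not supply ``complementary ranges of $y$'' in the main term: they are pure error terms, bounded by $\ll nN^{1+5\nu}/T^2$ via \cite[Theorem 3]{GG}. The main term comes only from $\mathcal{A}\overline{\mathcal{B}}$, which produces $\int_1^\infty S_{2,n}(y)\,\Re\hat\psi_U(yT/2\pi N)\,y^{-3}dy$, and from $\mathcal{A}^*\overline{\mathcal{B}^*}$, which produces $\int_1^\infty T_{2,n}(y)\,\Re\hat\psi_U(yT/2\pi N)\,y\,dy$; these two combine into $f_n(y)=yT_{2,n}(y)+S_{2,n}(y)/y^3$ over the \emph{same} range of $y$ with different weights. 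Second, the diagonal $m=m'$ does not account for the $O(1/\Lambda(n))$ error: the diagonal contributions are $-\frac{T^{-2\alpha}}{\Lambda(n)}\hat\psi(0)\sum_{l\le T^\alpha}\Lambda(l)\Lambda(ln)l$ and $\frac{nN^2}{\Lambda(n)}\sum_{m>N}\Lambda(m)\Lambda(m/n)m^{-3}$, which are shown to be $\ll T^{-\alpha}\ll T^{-1/2}$ precisely because $n=q^a$ is a prime power (so the sums run over powers of $q$ only); this use of the prime-power structure is absent from your sketch. The $O(1/\Lambda(n))$ term instead arises from the $\int_0^1$ portion and the $H^*$-truncation corrections in the $\mathcal{A}^*\overline{\mathcal{B}^*}$ evaluation.
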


\begin{proof}
    According to Lemma \ref{IntermediateRangeExplForm}, we write
    \begin{equation}\label{25sep.0}
        F_n(\alpha;\psi_U) 
        = F_n^{\mathcal A\overline{\mathcal B}}(\alpha;\psi_U) + F_n^{\mathcal A\overline{\mathcal B^*}}(\alpha;\psi_U) + F_n^{\mathcal A^*\overline{\mathcal B}}(\alpha;\psi_U) + F_n^{\mathcal A^* \overline{\mathcal B^*}}(\alpha;\psi_U)
        + O(\mathcal S), 
    \end{equation}
    with
    \begin{equation}\begin{split}\notag
        &F_n^{\mathcal A\overline{\mathcal B}}(\alpha;\psi_U)
        = -\frac{n}{T\Lambda(n)} \int_{\mathbb R} \mathcal{A}(t) \overline{\mathcal{B}(t)}  \psi_U\bigg(\frac{t}{T}\bigg) dt \\ 
        &F_n^{\mathcal A\overline{\mathcal B^*}}(\alpha;\psi_U)
        = -\frac{n}{T\Lambda(n)} \int_{\mathbb R} \mathcal{A}(t) \overline{\mathcal{B}^*(t)}  \psi_U\bigg(\frac{t}{T}\bigg) dt \\
        &F_n^{\mathcal A^* \overline{\mathcal B}}(\alpha;\psi_U)  
        = -\frac{n}{T\Lambda(n)} \int_{\mathbb R} \mathcal{A}^*(t) \overline{\mathcal{B}(t)}  \psi_U\bigg(\frac{t}{T}\bigg) dt \\
        &F_n^{\mathcal A^*\overline{\mathcal B^*}}(\alpha;\psi_U) 
        = -\frac{n}{T\Lambda(n)} \int_{\mathbb R} \mathcal{A}^*(t) \overline{\mathcal{B}^*(t)} \psi_U\bigg(\frac{t}{T}\bigg) dt .
    \end{split}\end{equation}
    In the range of $\alpha$ we are considering, we have $T<N<T^2$. Therefore, the above quantities involve mean values of \textit{long Dirichlet polynomial}. To evaluate them, we appeal to work of Goldston and Gonek \cite{GG}. 
    In order to apply their results, we note that
    \begin{equation}\begin{split}\notag
        \sum_{m\leq x} \Lambda(m) &= x + O(x^{1/2+\varepsilon}) \\
        \sum_{m\leq x} \Lambda(m/n) &= \frac{x}{n} + O\bigg(\bigg(\frac{x}{n}\bigg)^{1/2+\varepsilon}\bigg) 
    \end{split}\end{equation}
    by RH. Moreover, by the assumption of Conjecture \ref{HLC}, uniformly for $h\leq x^{1-\varepsilon}$ and for any $n$, we have 
    \begin{equation}\begin{split}\notag
        \sum_{m\leq x} \Lambda(m)\Lambda\bigg(\frac{m+h}{n}\bigg) 
        &= \sum_{h+1\leq m \leq x+h} \Lambda\bigg(\frac{m}{n}\bigg) \Lambda(m-h)
        = \mathfrak{S}_n(h) \frac{x}{n} + O(x^{1/2+\varepsilon}) \\
        \sum_{m\leq x} \Lambda\bigg(\frac{m}{n}\bigg) \Lambda(m+h)
        &= \mathfrak{S}_n(h)\frac{x}{n} + O(x^{1/2+\varepsilon}). 
    \end{split}\end{equation}
    We remark that the above formulas follow from Conjecture \ref{HLC} if (say) $n\leq \sqrt{x}$. Moreover, they become trivial in the complementary range $n>\sqrt{x}$. Hence, the two equations above do hold uniformly for every $n$.
    
    We start by looking at $F_n^{\mathcal A\overline{\mathcal B}}(\alpha;\psi_U)$. For any $0<\nu<\frac{1}{2}$, an application of \cite[Corollary 1]{GG} yields
    \begin{equation}\begin{split}\label{25sep.1}
        F_n^{\mathcal A\overline{\mathcal B}}(\alpha;\psi_U)
        &= -\frac{T^{-2\alpha}}{\Lambda(n)} \hat\psi(0)\sum_{l\leq T^\alpha} \Lambda(l)\Lambda(ln)l\\
        &\quad  -\frac{2}{\Lambda(n)}\frac{T^2}{(2\pi N)^2} \bigg( \int_{T/2\pi N}^{\infty} \sum_{1\leq h\leq 2\pi Nv/T} \mathfrak{S}_n(h)h^2 \times \Re\hat\psi_U(v) \frac{dv}{v^3}\\
        &\quad - \int_{T/2\pi\tau N}^{\infty} \frac{1}{3}\bigg(\frac{2\pi Nv}{T}\bigg)^3 \times \Re\hat\psi_U(v) \frac{dv}{v^3} \bigg)
        + O\bigg(\frac{nN^{1+5\nu}}{T^2}+\frac{nN^{\frac{1}{2}+\frac{11}{2}\nu}}{T}\bigg),
    \end{split}\end{equation}
    where $\tau= T^{1-\nu}$.
    Moreover, since $n=q^a$ is a prime power, we have
    $$ -\frac{T^{-2\alpha}}{\Lambda(n)} \sum_{l\leq T^\alpha} \Lambda(l)\Lambda(ln)l 
    =- \frac{\Lambda(n)}{T^{2\alpha}} \sum_{b\leq \alpha\frac{\log T}{\log q}} q^b
    =- \frac{\Lambda(n)}{T^{2\alpha}} \frac{q(T^\alpha-1)}{q-1}
    \ll \frac{\Lambda(n)}{T^\alpha} \ll \frac{1}{\sqrt{T}}. $$
    Finally, we can restrict the integral on the last line of \eqref{25sep.1} to the interval $(T/2\pi N,\infty)$, at the cost of an error bounded by $\ll (N/T)^2$.
    Therefore, with the change of variable $y=2\pi Nv/T$, Equation \eqref{25sep.1} reads
    \begin{equation}\begin{split}\label{25sep.2}
        F_n^{\mathcal A\overline{\mathcal B}}(\alpha;\psi_U)
        &= -\frac{2}{\Lambda(n)} \int_{1}^{\infty} \bigg(\sum_{1\leq h\leq y} \mathfrak{S}_n(h)h^2 - \frac{y^3}{3}\bigg)\times \Re\hat\psi\bigg(\frac{yT}{2\pi N}\bigg) \frac{dy}{y^3} \\
        &\hspace{3cm} + O\bigg(\frac{1}{\Lambda(n)}\bigg) 
        +O\bigg(\frac{nN^{1+5\nu}}{T^2}\bigg) + O\bigg(\frac{nN^{\frac{1}{2}+\frac{11}{2}\nu}}{T}\bigg).
    \end{split}\end{equation}
    We bound $F_n^{\mathcal A\overline{\mathcal B^*}}(\alpha;\psi_U)$ and $F_n^{\mathcal A^* \overline{\mathcal B}}(\alpha;\psi_U)$
    by a direct application of \cite[Theorem 3]{GG}; namely,
    \begin{equation}\begin{split}\label{25sep.3}
        F_n^{\mathcal A\overline{\mathcal B^*}}(\alpha;\psi_U) 
        \ll \frac{n}{T\Lambda(n)}\frac{N^{2-\frac{3}{2}+\frac{1}{2}+5\nu}}{T}
        \ll \frac{nN^{1+5\nu}}{T^2} 
        \quad \text{ and } \quad
        F_n^{\mathcal A^* \overline{\mathcal B}}(\alpha;\psi_U) \ll \frac{nN^{1+5\nu}}{T^2} .
    \end{split}\end{equation}
    Finally, we turn to $F_n^{\mathcal A^*\overline{\mathcal B^*}}(\alpha;\psi_U)$.  Applying Corollary 2 of \cite{GG}, we have 
    \begin{equation}\begin{split}\notag
        F_n^{\mathcal A^*\overline{\mathcal B^*}}(\alpha;\psi_U) 
        = &-\frac{nN^2}{T\Lambda(n)} \bigg\{ 
        \hat\psi_U(0)T \sum_{m>N} \frac{\Lambda(m)\Lambda(m/n)}{m^3} \\
        &+ 2\frac{(2\pi)^2}{nT} \int_{0}^{T/2\pi N} \sum_{1\leq h\leq H^*} \frac{\mathfrak{S}_n(h)}{h^2} \times \Re\hat\psi_U(v)v\,dv \\
        & + 2\frac{(2\pi)^2}{nT} \int_{T/2\pi N} ^{TH^*/2\pi N}\sum_{\frac{2\pi N v}{T}< h\leq H^*} \frac{\mathfrak{S}_n(h)}{h^2} \times\Re\hat\psi_U(v)v\,dv \\
        & - 2\frac{(2\pi)^2}{nT} \int_{0} ^{TH^*/2\pi N} \int_{2\pi Nv/T}^{H^*}\frac{dx}{x^2} \times \Re\hat\psi_U(v)v\,dv
        + O\bigg( \frac{N^{-1+5\nu}}{T} 
        + N^{-\frac{3}{2}+\frac{11}{2}\nu}
        + \frac{T^{1-\frac{\nu}{2}}}{N^2} \bigg) \bigg\} 
    \end{split}\end{equation}
    with $H^*=\frac{N^{2/(1-\nu)}}{T^{2(1-\nu)}}$. We perform the change of variable $2\pi Nv/T=y$, and we note that (if $n=q^a$)
    $$\frac{nN^2}{\Lambda(n)} \sum_{m>N} \frac{\Lambda(m)\Lambda(m/n)}{m^3} 
    =\frac{T^{2\alpha}}{\Lambda(n)} \sum_{b>\alpha\frac{\log T}{\log q}} \frac{\Lambda(n)^2}{q^{3b}}
    \ll \frac{T^{2\alpha} \log q}{q^3}  \frac{1}{T^{3\alpha}}
    \ll T^{-\alpha} \ll \frac{1}{\sqrt{T}}.$$
    Doing so, we obtain
    \begin{equation}\begin{split}\notag
        F_n^{\mathcal A^*\overline{\mathcal B^*}}(\alpha;\psi) 
        = &-\frac{2}{\Lambda(n)} \int_{0}^{1} \bigg( \sum_{1\leq h\leq H^*} \frac{\mathfrak{S}_n(h)}{h^2} - \int_{y}^{H^*}\frac{dx}{x^2} \bigg) \Re\hat\psi_U \bigg(\frac{yT}{2\pi N} \bigg) y\,dy \\
        & -\frac{2}{\Lambda(n)} \int_{1} ^{H^*} \bigg( \sum_{y\leq h\leq H^*} \frac{\mathfrak{S}_n(h)}{h^2} - \int_{y}^{H^*}\frac{dx}{x^2} \bigg) \Re\hat\psi_U \bigg(\frac{yT}{2\pi N} \bigg) y\,dy \\
        &+O\bigg( \frac{nN^{1+5\nu}}{T^2} + \frac{nN^{\frac{1}{2}+\frac{11}{2}\nu}}{T} + nT^{-\frac{\nu}{2}} 
        \bigg).
    \end{split}\end{equation}
    Trivially, one sees that
        \begin{equation}\begin{split}\notag
        &\int_{0}^{1} \bigg( \sum_{1\leq h\leq H^*} \frac{\mathfrak{S}_n(h)}{h^2} - \int_{y}^{H^*}\frac{dx}{x^2} \bigg) \Re\hat\psi_U \bigg(\frac{yT}{2\pi N} \bigg) y\,dy 
        \ll \int_{0}^{1} \bigg( 1+\frac{1}{H^*}+\frac{1}{y}\bigg) y\,dy 
         \ll 1.
    \end{split}\end{equation}
    Finally, using
    \begin{equation}\begin{split}\notag
        \sum_{y\leq h\leq H^*} \frac{\mathfrak{S}_n(h)}{h^2} - \int_{y}^{H^*}\frac{dx}{x^2}
        = T_{2,n}(y) + O\bigg( \frac{(\log H^*)^{2/3}}{(H^*)^{2}} \bigg)
    \end{split}\end{equation}
    and
    \begin{equation}\begin{split}\notag
        T_{2,n}(y) \ll \frac{(\log y)^{2/3}}{y^2},
    \end{split}\end{equation}    
    together with the fast decay of $\hat\psi_U$, we deduce
    \begin{equation}\begin{split}\label{25sep.4}
        F_n^{\mathcal A^*\overline{\mathcal B^*}}(\alpha;\psi) 
        = &-\frac{2}{\Lambda(n)} \int_{1}^{\infty} T_{2,n}(y) \times \Re\hat\psi_U \bigg(\frac{yT}{2\pi N} \bigg) y\,dy\\
        &+O\bigg( \frac{nN^{1+5\nu}}{T^2} + \frac{nN^{\frac{1}{2}+6\nu}}{T} + nT^{-\frac{\nu}{2}} \bigg)
        + O\bigg(\frac{1}{\Lambda(n)}\bigg).
    \end{split}\end{equation}
    Plugging \eqref{25sep.2},\eqref{25sep.3}, and \eqref{25sep.4} into \eqref{25sep.0}, we get
    \begin{equation}\begin{split}\notag
        F_n(\alpha;\psi) 
        =  -\frac{2}{\Lambda(n)} \int_{1}^{\infty} S_{2,n}(y)\times &\Re\hat\psi\bigg(\frac{uT}{2\pi N}\bigg) \frac{dy}{y^3} 
        -\frac{2}{\Lambda(n)} \int_{1}^{\infty} T_{2,n}(y) \times \Re\hat\psi_U \bigg(\frac{yT}{2\pi N} \bigg) y\,dy \\
        &+ O\bigg(\frac{1}{\Lambda(n)}\bigg) 
        +O\bigg( \frac{nN^{1+5\nu}}{T^2} + \frac{nN^{\frac{1}{2}+\frac{11}{2}\nu}}{T} + nT^{-\frac{\nu}{2}} \bigg)
        + O(\mathcal S)
    \end{split}\end{equation}
    as desired.
\end{proof}

\begin{lemma}\label{Lemma9.3}
Assume RH and Conjecture \ref{HLC}. If $n$ is a prime power smaller than $\leq T^{1/2-\varepsilon}$ for some fixed $0<\varepsilon<\frac{1}{2}$, and $T\leq N\leq T^2$,
then the error term $\mathcal S$ defined in \eqref{defErrorE} 
is $\ll T^{-1/4}$.
\end{lemma}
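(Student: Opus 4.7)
The plan is to bound separately each of the six summands defining $\mathcal S$. The two isolated tail terms are immediate: since $n \leq T^{1/2-\varepsilon}$ and $N \geq T$, we have
\[\frac{\sqrt n (\log T)^3}{T} \ll T^{-3/4+o(1)} \quad\text{and}\quad \frac{n^{3/2}(\log T)^2}{N^2} \ll T^{-5/4+o(1)},\]
both easily $\ll T^{-1/4}$.

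For the integral summand, the first step is Cauchy--Schwarz, using $\int_{\mathbb R}\psi_U(t/T)\,dt \ll T$, in the form
\[\int_{\mathbb R} |f(t)|\,\psi_U(t/T)\,dt \ll T^{1/2}\Bigl(\int_{\mathbb R} |f(t)|^2\,\psi_U(t/T)\,dt\Bigr)^{1/2},\]
applied in turn to each of $f=\mathcal A,\mathcal A^*,\mathcal B,\mathcal B^*$. This reduces matters to proving sharp enough mean square bounds for the four Dirichlet polynomial expressions.

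Each mean square is then controlled by the classical Montgomery--Vaughan inequality: for coefficients $a_m$ supported on $m\leq N$ one has $\int_{\mathbb R}|\sum a_m m^{-it}|^2 \psi_U(t/T)\,dt \ll \sum |a_m|^2(T+m)$, with an analogous one-sided bound for $m>N$. Applied to $A(-1/2+it)=\sum_{m\leq N}\Lambda(m)m^{1/2-it}$ this yields $\ll TN^2\log N + N^3\log N$, so that after dividing by $N^2$ we get $\int |\mathcal A(t)|^2 \psi_U(t/T)\,dt \ll (T+N)\log N$; the truncation integral $\int_1^N u^{1/2-it}\,du = (N^{3/2-it}-1)/(3/2-it) \ll N^{3/2}/(1+|t|)$ contributes only $\ll N/T$, which is subdominant. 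The same calculation applied to $N\sum_{m>N}\Lambda(m)m^{-3/2-it}$ gives $\int |\mathcal A^*(t)|^2 \psi_U(t/T)\,dt \ll (T+N)\log N$ as well. For $\mathcal B$ and $\mathcal B^*$, the restriction $\Lambda(m/n)\ne 0$ forces $m=nl$; this substitution produces $\sum |a_m|^2 = n\sum_{l\leq N/n}\Lambda(l)^2 l$, leading to the improved estimates $\int |\mathcal B|^2 \psi_U(t/T)\,dt, \int |\mathcal B^*|^2 \psi_U(t/T)\,dt \ll (T+N)(\log N)/n$.

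Combining Cauchy--Schwarz with these mean squares, using $T\leq N$, the total contribution of the integral summand to $\mathcal S$ is
\[\ll \frac{n\log T}{NT}\Bigl(\sqrt n\cdot T^{1/2}(N\log N)^{1/2}+T^{1/2}(N\log N/n)^{1/2}\Bigr)\ll \frac{n^{3/2}(\log T)^{3/2}}{(NT)^{1/2}}\leq \frac{n^{3/2}(\log T)^{3/2}}{T},\]
which under $n\leq T^{1/2-\varepsilon}$ is $\ll T^{-1/4-3\varepsilon/2}(\log T)^{3/2}\ll T^{-1/4}$. The main conceptual obstacle is that $N$ may be as large as $T^2$, so the Dirichlet polynomials involved are genuinely long; however, the normalising factor $1/N$ (respectively $N$) in front of $\mathcal A$ (respectively $\mathcal A^*$) exactly compensates for this length, and the assumption $n\leq T^{1/2-\varepsilon}$ absorbs the loss of $n^{3/2}$ coming from the prefactor $\frac{n\log T}{NT}\cdot\sqrt n$.
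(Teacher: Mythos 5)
Your proof is correct, but it takes a genuinely different and in fact more elementary route than the paper. The paper performs the same initial Cauchy--Schwarz reduction to the four mean squares, but then evaluates each of them through the Goldston--Gonek long-polynomial machinery (\cite[Corollaries 1 and 2]{GG}), which requires RH and Conjecture \ref{HLC} (with $n=1$) to control the off-diagonal correlations of $\Lambda$; this yields the sharper estimate $\int |\mathcal A(t)|^2 \psi_U(t/T)\,dt \ll T^{1+\varepsilon}$, uniformly in $N \le T^2$. You instead invoke only the unconditional Montgomery--Vaughan mean value theorem, accepting the weaker bound $(T+N)\log N \ll N\log N$; the point, which you identify correctly, is that the prefactor $\frac{n\log T}{NT}$ already carries a factor $N^{-1}$, so after Cauchy--Schwarz the loss of $N^{1/2}$ relative to the paper's $T^{1/2+\varepsilon}$ is harmless once $N \ge T$, and the hypothesis $n \le T^{1/2-\varepsilon}$ absorbs the $n^{3/2}$. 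Your handling of the subtracted main terms $\int_1^N u^{1/2-it}\,du$ (contributing $O(N/T)$ to the mean square after normalisation) and of the support condition $m = nl$ in $\mathcal B, \mathcal B^*$ is also correct, though the extra factor $1/n$ there is not actually needed since those terms lack the $\sqrt n$ weight. The net effect is that your argument proves the lemma unconditionally --- neither RH nor Conjecture \ref{HLC} is used --- whereas the paper's proof recycles the conditional asymptotic machinery it has already set up for Lemma \ref{LemmaHLC}; the conditional route would only become necessary if one wanted asymptotics for these mean squares rather than upper bounds.
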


\begin{proof}
By applying Cauchy-Schwarz inequality, we have
\begin{equation}\begin{split}\notag
   \mathcal S \ll \frac{n\log T}{N\sqrt{T}}\bigg(\int_{\mathbb R}\Big(n|\mathcal A(t)|^2 + n|\mathcal A^*(t)|^2 + |\mathcal B(t)|^2 + |\mathcal B^*(t)|^2\Big) \psi_U\bigg(\frac{t}{T}\bigg) dt\bigg)^{1/2} + T^{-1/4} .
\end{split}\end{equation}
We will bound each term above separately, by using the same technique as in the proof of Lemma \ref{LemmaHLC}. The calculation for the first two terms can also be found in \cite[Example 4]{GG} and with more details in \cite[Section 7]{GGOS}. We also remark that the second moments above can be even evaluated asymptotically; however, an upper bound is enough for our purposes. 
As for the second moment of $\mathcal A(t)$, \cite[Corollary 1]{GG} yields
\begin{equation}\label{25nov.1}
   \int_{\mathbb R}|\mathcal A(t)|^2 \psi_U\bigg(\frac{t}{T}\bigg) dt
   = 2T \int_{1}^{\infty} \bigg( \sum_{h\leq y} \mathfrak{S}(h)h^2 - \frac{y^3}{3} \bigg) \Re\hat\psi_U\bigg(\frac{yT}{2\pi N}\bigg) \frac{dy}{y^3}
   +O(T^{1+\varepsilon}).
\end{equation}
since the diagonal term from Goldston-Gonek's formula is 
$$\frac{T\hat\psi_U(0)}{N^2} \sum_{m\leq N} \Lambda(m)^2m 
\ll T^{1+\varepsilon}.$$
Note that Conjecture \ref{HLC} for $n=1$ guarantees that the assumptions of \cite[Corollary 1]{GG} are satisfied.
Moreover, the term involving the singular series in \eqref{25nov.1} can be evaluated with the same strategy as in the proof of \ref{LemmaHLC}, or as in \cite[p. 191]{GG}. Doing so, one gets
\begin{equation}\notag
   \frac{n\log T}{N\sqrt{T}}\bigg(\int_{\mathbb R} n |\mathcal A(t)|^2 \psi_U\bigg(\frac{t}{T}\bigg) dt\bigg)^{1/2}
   \ll \frac{n\log T}{N\sqrt{T}}\sqrt{nT^{1+\varepsilon}}
   \ll \frac{n^{3/2}T^{\varepsilon}}{N} 
   \ll T^{-1/4}.
\end{equation}
With little modification to the previous argument, one also shows that
\begin{equation}\notag
   \frac{n\log T}{N\sqrt{T}}\bigg(\int_{\mathbb R}  |\mathcal B(t)|^2 \psi_U\bigg(\frac{t}{T}\bigg) dt\bigg)^{1/2}
   \ll T^{-1/4}.
\end{equation}
The second moment of $\mathcal A^*$ and $\mathcal B^*$ can be obtained by the same strategy, invoking Corollary 2 of \cite{GG} in place of Corollary 1. The claim follows.
\end{proof}


\begin{lemma}\label{Lemma9.3bis}
    Let $n=q^a$ be a prime power, and $Q$ a parameter. Then,
    \begin{equation}\begin{split}\notag
        -\frac{2}{\Lambda(n)} \int_1^\infty f_n(y) \Re\hat\psi_U\left( \frac{y}{2 \pi Q} \right) \, dy 
        = \min\bigg\{1,\frac{\log Q}{\Lambda(n)} \bigg\}
        &+ O\bigg(\frac{\log Q}{U}\bigg)
        + O\bigg(\frac{1+\log U}{\Lambda(n)} \bigg) .
    \end{split}\end{equation}
\end{lemma}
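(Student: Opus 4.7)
The plan is to use Lemma \ref{LemmaAsForf_n} to replace $f_n(y)$ by its main term $-\frac{1}{2y}$ (on $1 \le y < q$), and then reduce the resulting integral to an expression involving the cosine integral $\mathrm{Ci}$, which is analysed by splitting into the cases $q \le Q$ and $q > Q$.

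First, I would split the outer integral at $y = q$. For $y < q$, substituting $f_n(y) = -\frac{1}{2y} + O(y^{-5/4}) + O(q^{-1})$ and using the trivial bound $|\Re\hat\psi_U(v)| \le \int_1^2|\psi_U| \le 1$ yields
$$-\frac{2}{\Lambda(n)}\int_1^q f_n(y) \Re\hat\psi_U\bigg(\frac{y}{2\pi Q}\bigg) dy = \frac{1}{\Lambda(n)}\int_1^q \frac{\Re\hat\psi_U(y/(2\pi Q))}{y} dy + O\bigg(\frac{1}{\Lambda(n)}\bigg),$$
because $\int_1^\infty y^{-5/4}dy < \infty$ and the $O(q^{-1})$ contribution over an interval of length $\le q$ is $O(1/\Lambda(n))$. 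The tail $y \ge q$ contributes similarly $O(1/\Lambda(n))$ via the bound $O(q^{1/4}y^{-5/4})$.

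Next, I would use the Fourier representation $\Re\hat\psi_U(u) = \int_1^2 \psi_U(t)\cos(2\pi u t)\,dt$ and Fubini, followed by the substitution $v = yt/Q$, to rewrite
$$J := \int_1^q \frac{\Re\hat\psi_U(y/(2\pi Q))}{y}\,dy = \int_1^2 \psi_U(t)\bigl[\mathrm{Ci}(qt/Q) - \mathrm{Ci}(t/Q)\bigr]dt.$$
Employing the expansion $\mathrm{Ci}(x) = \gamma + \log x + s(x)$ with $s(x) := \int_0^x (\cos v - 1)/v\,dv$, which satisfies $s(x) \ll x^2$ for $|x| \le 1$ and $s(x) = -\gamma - \log x + O(1/x)$ for $x \ge 1$, the log terms collapse to $\log q$. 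Since $\Lambda(n) = \log q$ and $\hat\psi_U(0) = 1 + O(1/U)$, this gives
$$J = \log q + O\bigg(\frac{\log q}{U}\bigg) + R, \qquad R := \int_1^2 \psi_U(t)\bigl[s(qt/Q) - s(t/Q)\bigr]dt.$$

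The remaining task is to control $R$ in the two regimes. When $q \le Q$, both $qt/Q$ and $t/Q$ lie in a bounded interval, so $s$ remains $O(1)$ and $R = O(1)$; dividing by $\Lambda(n)$ gives $J/\Lambda(n) = 1 + O(1/U) + O(1/\Lambda(n))$, which matches $\min\{1, \log Q/\Lambda(n)\} = 1$. When $q > Q$, the large-$x$ expansion of $s(qt/Q)$ applies (while $s(t/Q) = O(1)$), yielding
$$R = -\log q + \log Q + O(1) + O\bigg(\frac{\log q + \log Q}{U}\bigg),$$
so that $J/\Lambda(n) = \log Q/\Lambda(n) + O(\log Q/U) + O(1/\Lambda(n))$, matching $\min\{1, \log Q/\Lambda(n)\} = \log Q/\Lambda(n)$. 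The hardest part will be tracking the uniformity of the constants in the expansion of $s$, particularly across the intermediate regime $q \asymp Q$ (where the $s$-values are neither small nor in the asymptotic tail); the stated $\log U$ factor then comes for free since $O(1/\Lambda(n)) \le O((1+\log U)/\Lambda(n))$, and $O(1/U) \le O(\log Q/U)$ whenever $\log Q \ge 1$.
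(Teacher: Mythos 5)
Your proposal is correct, and its first step coincides with the paper's: both apply Lemma \ref{LemmaAsForf_n} to reduce everything to $\frac{1}{\Lambda(n)}\int_1^q \Re\hat\psi_U\big(\frac{y}{2\pi Q}\big)\frac{dy}{y}+O\big(\frac{1}{\Lambda(n)}\big)$, with the $O(y^{-5/4})$, $O(q^{-1})$ and tail contributions absorbed exactly as you describe. Where you genuinely diverge is in the evaluation of this main integral. The paper stays on the $y$-side: for $q\le Q$ it Taylor-expands $\Re\hat\psi_U$ at $0$ via $\Re\hat\psi_U(y/(2\pi Q))=1+O(1/U)+O(y/Q)$, and for $q>Q$ it truncates the integral at $Q$ and at $QU$, using $\hat\psi_U(x)\ll\min\{1,U/x\}$ on the outer ranges; the middle range $[Q,QU]$ is bounded trivially and is the source of the $O(\log U/\Lambda(n))$ term in the statement. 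You instead pass to the Fourier side, writing the integral as $\int_1^2\psi_U(t)\big[\mathrm{Ci}(qt/Q)-\mathrm{Ci}(t/Q)\big]dt$ and invoking the standard expansions of $\mathrm{Ci}$ near $0$ and at infinity. Your worry about the regime $q\asymp Q$ is unfounded: the case split $q\le Q$ versus $q>Q$ places the arguments of $s$ either in $[0,2]$ (where $s=O(1)$ uniformly) or in $[1,\infty)$ (where $s(x)=-\gamma-\log x+O(1/x)$ uniformly), so no delicate matching is needed. In fact your route exploits the oscillation of the cosine where the paper only uses $|\hat\psi_U|\ll 1$, and therefore yields a slightly sharper error, $O\big(\frac{\log Q}{U\Lambda(n)}\big)+O\big(\frac{1}{\Lambda(n)}\big)$ with no $\log U$; the only (harmless) caveat, which you already flag, is that absorbing the leftover $O(1/U)$ into the stated error requires $\log Q\gg 1$, an issue the paper's own proof shares.
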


\begin{proof}
    Abbreviating the left-hand side as LHS and applying Lemma \ref{LemmaAsForf_n}, we obtain
    \begin{equation}\begin{split}\notag
        \mathrm{LHS} 
        = &\frac{1}{\Lambda(n)} \int_1^q  \Re\hat\psi_U\left( \frac{y}{2 \pi Q} \right) \frac{dy}{y}
        +O\bigg(\frac{1}{\Lambda(n)} \int_1^q \frac{1}{y^{5/4}} 
        \bigg|\hat\psi_U\left( \frac{y}{2 \pi Q} \right)\bigg| \, dy\bigg)\\
        &+O\bigg(\frac{1}{q\Lambda(n)} \int_1^q  \bigg|\hat\psi_U\left( \frac{y}{2 \pi Q} \right) \, dy\bigg|\bigg)
        +O\bigg(\frac{q^{1/4}}{\Lambda(n)} \int_q^\infty \frac{1}{y^{5/4}} \bigg|\hat\psi_U\left( \frac{y}{2 \pi Q} \right)\bigg| dy\bigg).
    \end{split}\end{equation}
    Since $\hat\psi_U(x)\ll 1$, the error terms can be bounded trivially, and the above reads
    \begin{equation}\begin{split}\notag
        \mathrm{LHS}
        = &\frac{1}{\Lambda(n)} \int_1^q  \Re\hat\psi_U\left( \frac{y}{2 \pi Q} \right) \frac{dy}{y}
        +O\bigg(\frac{1}{\Lambda(n)}\bigg).
    \end{split}\end{equation}
    As for the main term, we split into two cases. If $q\leq Q$, we Taylor expand $\hat\psi_U$ around 0, and write
    \begin{equation}\label{TaylorExpansionPsi}
        \Re\hat\psi_U\bigg(\frac{y}{2\pi Q}\bigg) 
        = \Re\hat\psi_U(0) + O\bigg(\frac{y}{Q}\bigg)
        = 1 + O\bigg(\frac{1}{U}\bigg) + O\bigg(\frac{y}{Q}\bigg),
    \end{equation}
    since $\Re\hat\psi_U(0) = 1+O(1/U)$. Hence, for $q\leq Q$,
    \begin{equation}\begin{split}\notag
        \mathrm{LHS}
        &= \frac{1}{\Lambda(n)} \int_1^q  \frac{dy}{y}
        +O\bigg(\frac{1}{U\Lambda(n)} \int_1^q \frac{dy}{y} \bigg)
        + O\bigg(\frac{T}{N\Lambda(n)} \int_1^q  dy \bigg)
        +O\bigg(\frac{1}{\Lambda(n)}\bigg)\\
        &= 1
        +O\bigg(\frac{1}{U} \bigg)
        +O\bigg(\frac{1}{\Lambda(n)}\bigg).
    \end{split}\end{equation}
    In the case $q>Q$, we truncate the integral at height $Q$ as follows:
    \begin{equation}\begin{split}\notag
        \mathrm{LHS}
        = \frac{1}{\Lambda(n)} \int_1^{Q}  \Re\hat\psi_U\left( \frac{y}{2 \pi Q} \right) \frac{dy}{y} 
        &+ O\bigg(\frac{1}{\Lambda(n)} \int_{Q}^{QU}  \bigg|\hat\psi_U\left( \frac{y}{2 \pi Q} \right)\bigg| \frac{dy}{y} \bigg) \\
        &+ O\bigg(\frac{1}{\Lambda(n)} \int_{QU}^q  \bigg|\hat\psi_U\left( \frac{y}{2 \pi Q} \right) \bigg| \frac{dy}{y} \bigg)
        +O\bigg(\frac{1}{\Lambda(n)}\bigg).
    \end{split}\end{equation}
    The first term can be evaluated by using Equation \eqref{TaylorExpansionPsi}. For the error terms, we use $\hat\psi_U(x) \ll \min\{1,U/x\}$, and get
    \begin{equation}\begin{split}\notag
        \mathrm{LHS}
        = \frac{\log Q}{\Lambda(n)} 
        &+ O\bigg(\frac{\log Q}{U\Lambda(n)}\bigg)
        + O\bigg(\frac{\log U}{\Lambda(n)} \bigg) 
        +O\bigg(\frac{1}{\Lambda(n)}\bigg)
    \end{split}\end{equation}
    for $q\leq Q$. The claim follows.
\end{proof}

Putting together the previous lemmas, we now prove Proposition \ref{TPC_IntermediateAlpha}.

\begin{proof}[Proof of Proposition \ref{TPC_IntermediateAlpha}]
    We start by Lemma \ref{LemmaHLC}. A bound for $\mathcal S$ is given by Lemma \ref{Lemma9.3}. As for the main term, we apply Lemma \ref{Lemma9.3bis} with $Q=N/T$. This yields
    \begin{equation}\begin{split}\notag
        F_n(\alpha;\psi_U) = \min\bigg\{1,\frac{\log (N/T)}{\Lambda(n)} \bigg\}
        &+ O\bigg(\frac{\log (N/T)}{U}\bigg)
        + O\bigg(\frac{1+\log U}{\Lambda(n)} \bigg) 
        + O(T^{-1/4})\\
        &+ O\bigg(\frac{1}{\Lambda(n)}\bigg) 
        +O\bigg( \frac{nN^{1+5\nu}}{T^2} + \frac{nN^{\frac{1}{2}+\frac{11}{2}\nu}}{T} + nT^{-\frac{\nu}{2}} \bigg),
    \end{split}\end{equation}
    We take $U=(\log T)^2$ so that 
    $$ \frac{\log (N/T)}{U} + \frac{1+\log U}{\Lambda(n)} \ll \frac{\log\log T}{\Lambda(n)}. $$
    If
    \begin{equation}\label{18Nov.1}
    T\leq N\leq T^{\frac{1-\nu/2}{1/2+11\nu/2}} 
    \quad \text{i.e.} \quad 
    1-\frac{\log n}{\log T} \leq \alpha \leq \frac{1-\nu/2}{1/2+11\nu/2}- \frac{\log n}{\log T},
    \end{equation}
    the two error terms $nN^{1+5\nu}/T^2$ and $nN^{1/2+11\nu/2}/T$ are both $\ll nT^{-\nu/2}$. Hence, 
    \begin{equation}\begin{split}\notag
        F_n(\alpha;\psi_U) = \min\bigg\{1,\frac{\log (N/T)}{\Lambda(n)} \bigg\}
        &+ O\bigg(\frac{\log\log T}{\Lambda(n)}\bigg) 
        +O( nT^{-\frac{\nu}{2}} ).
    \end{split}\end{equation}
    To control the last error term, it suffices to take $\nu=(2+\varepsilon)\frac{\log n}{\log T}$ with $\varepsilon = \frac{1}{100}$, say. This guarantees that $nT^{-\nu/2}\ll n^{-\varepsilon/2} \ll 1/\log n$. With these choices, the range in \eqref{18Nov.1} becomes
    $$
    1-\frac{\log n}{\log T} \leq \alpha \leq \frac{2-(2+\varepsilon)\frac{\log n}{\log T}}{1+11(2+\varepsilon)\frac{\log n}{\log T}}- \frac{\log n}{\log T},$$
    Since the range above is contained in the range in \eqref{RangeIntermediatealpha},
    this concludes the proof.
    \end{proof}
    
    \begin{remark}\label{Remark48}
    We note that the most restrictive error term in terms of the range of $n$ and $N$ considered in Proposition \ref{TPC_IntermediateAlpha} is given by the last error term of \cite[Corollary 2]{GG}. It seems plausible that one could make this error term smaller by choosing the parameter $H^*$ there bigger. This in turn is possible as long as the parameter $\eta$, which controls the uniformity in the shift $h$, is strictly bigger than $\frac{1}{2}$, compare \cite[p. 174]{GG}. We have assumed this to be the case through Conjecture \ref{HLC}. Following this approach, one may be able to improve upon Proposition \ref{TPC_IntermediateAlpha}. In particular, this may allow one to choose $\alpha$ up to essentially $2 - 3 \frac{\log n}{\log T}$ on a range of $n \le T^\beta$ for some fixed $\beta > 0$.
    \end{remark}

\bibliography{lit}

\begin{thebibliography}{LMQH23}

\bibitem[Bol87]{Bolanz}
J.~Bolanz.
\newblock {\em {\"U}ber die Montgomery'sche Paarvermutung}.
\newblock PhD thesis, Universit{\"a}t Freiburg, 1987.

\bibitem[Cha04]{Chan1}
T.~Chan.
\newblock More precise {P}air {C}orrelation {C}onjecture on the zeros of the
  {R}iemann zeta function.
\newblock {\em Acta Arithmetica}, 114:199--214, 01 2004.

\bibitem[FG95]{FG1}
J.B. Friedlander and D.~A. Goldston.
\newblock {Some singular series averages and the distribution of {G}oldbach
  numbers in short intervals}.
\newblock {\em Illinois Journal of Mathematics}, 39(1):158 -- 180, 1995.

\bibitem[GG98]{GG}
D.~A. Goldston and S.~M. Gonek.
\newblock Mean value theorems for long {D}irichlet polynomials and tails of
  {D}irichlet series.
\newblock {\em Acta Arithmetica}, 1998.

\bibitem[GGOS00]{GGOS}
D.~A. Goldston, S.~M. Gonek, A.~E. \"{O}zl\"{u}k, and C.~Snyder.
\newblock On the pair correlation of zeros of the {R}iemann zeta-function.
\newblock {\em Proc. London Math. Soc. (3)}, 80(1):31--49, 2000.

\bibitem[Gol87]{Goldston1}
D.~A. Goldston.
\newblock On the function {$S(T)$} in the theory of the {R}iemann
  zeta-function.
\newblock {\em Journal of Number Theory}, 27(2):149--177, 1987.

\bibitem[Gon85]{Gonek1}
S.~M. Gonek.
\newblock A formula of {L}andau and {M}ean {V}alues of {$\zeta(s)$}.
\newblock In {\em Topics in Analytic Number Theory}, pages 92--96, 1985.

\bibitem[GS20]{GoldstonSuriajaya1}
D.~A. Goldston and A.~I. Suriajaya.
\newblock A singular series average and the zeros of the {R}iemann
  zeta-function.
\newblock {\em Acta Arithmetica}, 2020.

\bibitem[GS21]{GoldstonSuriajaya2}
D.~A. Goldston and A.~I. Suriajaya.
\newblock The error term in the {C}es\`{a}ro mean of the prime pair singular
  series.
\newblock {\em Journal of Number Theory}, 227:144--157, 2021.

\bibitem[Hej94]{Hejhal1}
D.~A. Hejhal.
\newblock {On the triple correlation of zeros of the zeta function}.
\newblock {\em International Mathematics Research Notices}, 1994(7):293--302,
  04 1994.

\bibitem[KS00]{KS1}
J.~Keating and N~Snaith.
\newblock Random matrix theory and {$\zeta$}(1/2+it).
\newblock {\em Commun. Math. Phys.}, 214:57–89, 2000.

\bibitem[LMQH23]{LMQ}
M.~M. Lugar, M.~B. Milinovich, and E.~Quesada-Herrera.
\newblock On the number variance of zeta zeros and a conjecture of {B}erry.
\newblock {\em Mathematika}, 69(2):303--348, 2023.

\bibitem[Mon73]{Montgomery1}
H.~L. Montgomery.
\newblock The pair correlation of zeros of the zeta function.
\newblock In {\em Analytic number theory ({P}roc. {S}ympos. {P}ure {M}ath.,
  {V}ol. {XXIV}, {S}t. {L}ouis {U}niv., {S}t. {L}ouis, {M}o., 1972)}, volume
  Vol. XXIV of {\em Proc. Sympos. Pure Math.}, pages 181--193. Amer. Math.
  Soc., Providence, RI, 1973.

\bibitem[Odl]{Odlyzko}
A.~M. Odlyzko.
\newblock The $10^{20}$-th zero of the riemann zeta function and 70 million of
  its neighbors.
\newblock Preprint, 1989.

\bibitem[Rod13]{Rodgers1}
B.~Rodgers.
\newblock {\em The statistics of the zeros of the Riemann zeta-function and
  related topics}.
\newblock PhD thesis, UCLA, 2013.

\bibitem[RS96]{RS1}
Z.~Rudnick and P.~Sarnak.
\newblock {Zeros of principal $L$-functions and random matrix theory}.
\newblock {\em Duke Mathematical Journal}, 81(2):269 -- 322, 1996.

\bibitem[Sel46]{SelbergCLT}
A.~Selberg.
\newblock Contributions to the theory of the {R}iemann zeta-function.
\newblock {\em Arch. Math. Naturvid.}, 48(5):89--155, 1946.

\bibitem[Sel92]{SelbergOandN}
A.~Selberg.
\newblock Old and new conjectures and results about a class of {D}irichlet
  series.
\newblock In {\em Proceedings of the Amalfi Conference on Analytic Number
  Theory}, page 367–385, 1992.

\bibitem[Tit87]{Titchmarsh1}
E.~C. Titchmarsh.
\newblock {\em The Theory of the Riemann Zeta-Function}.
\newblock Oxford University Press, {2}nd edition, 1987.
\newblock Revised by D. R. Heath-Brown.

\bibitem[Tsa84]{Tsang1}
K.~M. Tsang.
\newblock {\em The distribution of the values of the Riemann zeta function}.
\newblock PhD thesis, Princeton University, 1984.

\end{thebibliography}
\bibliographystyle{alpha}

\end{document}